\newcommand{\myauthor}{Benjamin Antieau, Akhil Mathew, and Thomas Nikolaus}
\newcommand{\mytitle}{On the Blumberg--Mandell K\"unneth theorem for TP}
\title{On the Blumberg--Mandell K\"unneth theorem for $\TP$}
\author{Benjamin Antieau\footnote{Benjamin Antieau was supported
by NSF Grant DMS-1552766.}, Akhil Mathew,\footnote{Akhil
Mathew was supported by a Clay Research Fellowship.}\, and Thomas Nikolaus}
\date{\today}
\definecolor{todo}{rgb}{1,0,0}
\definecolor{conditional}{rgb}{0,1,0}
\definecolor{e-mail}{rgb}{0,.40,.80}
\definecolor{reference}{rgb}{.20,.60,.22}
\definecolor{mrnumber}{rgb}{.80,.40,0}
\definecolor{citation}{rgb}{0,.40,.80}
\let\oldmarginpar\marginpar
\renewcommand\marginpar[1]{\-\oldmarginpar[\raggedleft\footnotesize #1]%
{\raggedright\footnotesize #1}}
\newcommand{\Ascr}{\mathcal{A}}
\newcommand{\Cscr}{\mathcal{C}}
\newcommand{\Dscr}{\mathcal{D}}
\newcommand{\E}{\mathrm{E}}
\renewcommand{\H}{\mathrm{H}}
\renewcommand{\L}{\mathrm{L}}
\renewcommand{\mathds}{\mathbb}
\newcommand{\EE}{\mathds{E}}
\newcommand{\FF}{\mathds{F}}
\newcommand{\Sp}{\mathrm{Sp}}
\newcommand{\CycSp}{\mathrm{CycSp}}
\newcommand{\ev}{\mathrm{ev}}
\newcommand{\coev}{\mathrm{coev}}
\DeclareMathOperator{\id}{id}
\renewcommand{\geq}{\geqslant}
\renewcommand{\leq}{\leqslant}
\newcommand{\THH}{\mathrm{THH}}
\newcommand{\HP}{\mathrm{HP}}
\newcommand{\TP}{\mathrm{TP}}
\newcommand{\TC}{\mathrm{TC}}
\newcommand{\TF}{\mathrm{TF}}
\newcommand{\TR}{\mathrm{TR}}
\newcommand{\HH}{\mathrm{HH}}
\DeclareMathOperator{\Pic}{Pic}
\DeclareMathOperator{\End}{End}
\DeclareMathOperator{\Hom}{Hom}
\newcommand{\Mod}{\mathrm{Mod}}
\newcommand{\Perf}{\mathrm{Perf}}
\newcommand{\CAlg}{\mathrm{CAlg}}
\newcommand{\we}{\simeq}
\newcommand{\iso}{\cong}
\theoremstyle{plain}
\newtheorem{theorem}{Theorem}[section]
\newtheorem*{theorem*}{Theorem}
\newtheorem{lemma}[theorem]{Lemma}
\newtheorem{proposition}[theorem]{Proposition}
\newtheorem{corollary}[theorem]{Corollary}
\newtheoremstyle{named}{}{}{\itshape}{}{\bfseries}{.}{.5em}{#1 \thmnote{#3}}
\theoremstyle{named}
\theoremstyle{definition}
\newtheorem{definition}[theorem]{Definition}
\newtheorem{example}[theorem]{Example}
\newtheorem{construction}[theorem]{Construction}
\newtheorem{remark}[theorem]{Remark}
\begin{document}

\maketitle

\begin{abstract}
    \noindent
    We give a new proof of the recent K\"unneth theorem for periodic topological cyclic
    homology ($\TP$) of smooth and proper dg categories over perfect fields of
    characteristic $p>0$ due to Blumberg and
    Mandell. Our result is slightly stronger and implies a
    finiteness theorem for topological cyclic homology ($\TC$) of such categories.

    \paragraph{Key Words.} K\"unneth theorems, the Tate construction, topological Hochschild
    homology, periodic topological cyclic homology.

    \paragraph{Mathematics Subject Classification 2010.}
    \href{http://www.ams.org/mathscinet/msc/msc2010.html?t=14Fxx&btn=Current}{14F30},
    \href{http://www.ams.org/mathscinet/msc/msc2010.html?t=16Exx&btn=Current}{16E40},
    \href{http://www.ams.org/mathscinet/msc/msc2010.html?t=19Dxx&btn=Current}{19D55}.
\end{abstract}


\section{Introduction}

Let $k$ be a perfect field and let $A$ be a commutative $k$-algebra. 
In \cite{hesselholt-tp}, Hesselholt studies the 
\emph{periodic topological cyclic homology} $\TP(A)$, defined as the Tate
construction $\THH(A)^{tS^1}$ of the $S^1$-action on the topological Hochschild
homology spectrum $\THH(A)$.\footnote{More generally, if $R$ is any $\EE_\infty$-ring spectrum and $A$ is an
$R$-algebra, we can consider $\THH(A/R)$, which is topological Hochschild
homology relative to $R$. If $R$ is discrete, we write $\HH(A/R)=\THH(A/R)$.}
In characteristic $0$,  the analogous construction, periodic cyclic homology
$\HP(\cdot/k)=\HH(\cdot/k)^{tS^1}$, is related to (2-periodic) de Rham cohomology. 
In characteristic $p>0$, which we will assume henceforth, $\TP(A)$ is of significant
arithmetic interest: by work of Bhatt, Morrow, and Scholze
\cite{BMS2}, the
construction $\TP(A)$ is closely related to the \emph{crystalline cohomology}
of $A$ over $k$.  For instance, one has $\pi_*(\TP(k))\simeq  W(k)[x^{\pm 1}]$ with $|x|
= 2$, a $2$-periodic form of the coefficient ring of crystalline cohomology.

The construction $A \mapsto\TP(A)$ globalizes to quasi-compact quasi-separated schemes, and in fact $\TP$ of
a scheme $X$ is determined
entirely by the dg category of perfect complexes on $X$.
Let $\Cscr$ be a $k$-linear dg category. In this case, one similarly defines
the spectrum $\TP(\Cscr) =\THH(\mathcal{C})^{tS^1}$. Thus $\TP$ defines a 
functor from $k$-linear dg categories to $\TP(k)$-module spectra. 
In a similar way that 
crystalline cohomology is a lift to characteristic zero of de Rham cohomology,
$\TP(\mathcal{C})$ is an integral 
lift of  the periodic cyclic homology $\HP(\mathcal{C}/k)$
(see Theorem~\ref{integrallift}, which is due to \cite{BMS2}).
 With respect to the tensor product on $k$-linear dg categories, the construction $\mathcal{C} \mapsto
\TP(\mathcal{C})$ is a lax symmetric monoidal functor. 

Many cohomology theories for schemes, such as the crystalline theory mentioned
above, satisfy a K\"unneth
formula. In~\cite{blumberg-mandell-tp}, Blumberg and Mandell prove the
following result.
\begin{theorem}[Blumberg--Mandell] 
\label{BMthm}
Let $k$ be a perfect field of characteristic $p > 0$. 
\begin{enumerate}
\item  
If $\Cscr$ is a smooth and proper $k$-linear dg category, then
$\TP(\Cscr)$ is compact as a $\TP(k)$-module spectrum.
\item
(K\"unneth formula) If $\Cscr$ and $\Dscr$ are smooth and proper $k$-linear dg
categories, then the natural map
$$\TP(\Cscr)\otimes_{\TP(k)}\TP(\Dscr)\rightarrow\TP(\Cscr\otimes_k\Dscr)$$ is
an equivalence.
\end{enumerate}
\end{theorem}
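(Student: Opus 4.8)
The plan is to deduce both statements from the far more elementary analogues for periodic cyclic homology over $k$, using the comparison of $\TP$ with $\HP$ recorded in Theorem~\ref{integrallift} together with a $p$-completeness argument.

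\emph{Reduction to $\HP$.} First, $\THH(\Cscr)$ is a perfect $\THH(k)$-module: a $k$-linear dg category is smooth and proper precisely when it is dualizable for $\otimes_k$, and $\THH$ is a symmetric monoidal functor from $k$-linear dg categories to $\THH(k)$-modules in spectra with $S^1$-action, so it carries $\Cscr$ to a dualizable, hence perfect, $\THH(k)$-module. Since $\THH(k)$ is connective and $p$-complete, $\THH(\Cscr)$ is bounded below and $p$-complete, and therefore $\TP(\Cscr)=\THH(\Cscr)^{tS^1}$ is $p$-complete: the Tate construction of a bounded-below spectrum is the limit of its complete Tate filtration, whose successive quotients are finite extensions of shifts of $\THH(\Cscr)$ and so are $p$-complete. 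This completeness is the crux, and I expect it to be the main obstacle: the Tate construction is badly behaved in general, and it is exactly boundedness below of $\THH(\Cscr)$ that forces $\TP(\Cscr)$ to be $p$-complete, which is what makes reduction modulo $p$ faithful below. Finally, Theorem~\ref{integrallift} provides, naturally and compatibly with tensor products, identifications $\HP(k/k)\simeq\TP(k)/p$ and $\HP(\Cscr/k)\simeq\TP(\Cscr)/p$ under which the mod-$p$ reduction of the $\TP$-Künneth map becomes the $\HP$-Künneth map.

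\emph{The statements over $k$.} Since $\Cscr$ is proper and smooth, $\HH(\Cscr/k)$ is a perfect $k$-module; and since $BS^1$ is simply connected, every perfect $k$-module equipped with an $S^1$-action admits a finite filtration whose graded pieces carry the trivial action. Applying the exact functor $(-)^{tS^1}$, we find that $\HP(\Cscr/k)=\HH(\Cscr/k)^{tS^1}$ lies in the thick subcategory generated by $k^{tS^1}=\HP(k/k)$ and so is a perfect $\HP(k/k)$-module. The $\HP$-Künneth map is an equivalence by the same mechanism: $\HH(\Cscr\otimes_k\Dscr/k)\simeq\HH(\Cscr/k)\otimes_k\HH(\Dscr/k)$ as spectra with $S^1$-action, both sides of the comparison map are exact in each variable, and the map is visibly an equivalence when $\HH(\Cscr/k)$ is $k$ with trivial action.

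\emph{Lifting back to $\TP$.} For part (1): $\TP(\Cscr)$ is a $p$-complete $\TP(k)$-module whose reduction modulo $p$, namely $\HP(\Cscr/k)$, is perfect over $\TP(k)/p$ and in particular has finite-dimensional homotopy in each degree; since $\TP(k)$ is an even-periodic $\EE_\infty$-ring with $\pi_0=W(k)$, so that $\pi_*\TP(k)$ is regular Noetherian, a derived Nakayama argument applies — each $\pi_n\TP(\Cscr)$ is a $p$-complete $W(k)$-module that is finitely generated modulo $p$, hence finitely generated over $W(k)$, so $\pi_*\TP(\Cscr)$ is a finitely generated module over the regular Noetherian ring $\pi_*\TP(k)$ and thus of finite projective dimension — whence $\TP(\Cscr)$ is a perfect $\TP(k)$-module. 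For part (2): by part (1) the three modules $\TP(\Cscr)$, $\TP(\Dscr)$ and $\TP(\Cscr\otimes_k\Dscr)$ are perfect, hence $p$-complete, $\TP(k)$-modules, so the Künneth map is an equivalence as soon as it is so after $-\otimes_{\TP(k)}\TP(k)/p$; and that reduction is, by the foregoing, precisely the $\HP$-Künneth equivalence.
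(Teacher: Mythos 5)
Your proposal is correct and is essentially the paper's own ``Second proof of Theorem~\ref{BMthm}'': reduce to the K\"unneth formula and finiteness for $\HP(\cdot/k)$ via the integral-lift identification $\TP(\mathcal{C})/p\simeq\HP(\mathcal{C}/k)$ (Theorem~\ref{integrallift}), use $p$-completeness of $\TP$ of bounded-below inputs, and lift finiteness through Lemma~\ref{lem:finitelygen} and Proposition~\ref{whenperfect}, with conservativity of reduction mod $p$ giving the K\"unneth equivalence. The only difference from the paper's primary route is that the latter proves the stronger statement that $\THH(\mathcal{C})$ is perfect in $\Mod_{\THH(k)}(\Sp^{BS^1})$ (Theorem~\ref{perfres2}, via unipotence and regularity), from which all the $(-)^{tH}$ and $(-)^{hH}$ K\"unneth formulas follow at once.
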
 
  
This result is the key ingredient in Tabuada's proof \cite{tabuada} of the fact that the category of noncommutative
numerical motives is abelian semi-simple. This is a generalization of Jannsen's theorem \cite{jannsen}  to the noncommutative case.
With this application in mind the above theorem was suggested by Tabuada. \\

In this paper, we give a short proof of a stronger form of 
Theorem~\ref{BMthm}.  Let $(\mathcal{A}, \otimes, \mathbbm{1})$ be a symmetric
monoidal, stable $\infty$-category with biexact tensor product (in short:
\emph{a stably symmetric monoidal $\infty$-category}). 
We now use the following definition. 

\begin{definition}\label{perfect}
    An object $X \in \mathcal{A}$ is \emph{perfect} if it belongs to the thick subcategory generated by the unit.
\end{definition}

Note that for $\mathcal{A}\we\Mod_R$, the $\infty$-category of modules over
an $\EE_\infty$-ring
spectrum $R$, an object is perfect if and only if it is dualizable if and only
if it is compact. For general $\Ascr$, this is not typically the case.

Let $\Sp$ be the $\infty$-category of
spectra and let $F \colon  \mathcal{A} \to \Sp$ be a lax
symmetric monoidal, exact functor. 
Note that $F(\mathbbm{1}) $ is naturally an $\EE_{\infty}$-ring and $F(X)$ is an
$F(\mathbbm{1})$-module for any $X \in \mathcal{A}$. 
For any $X, Y \in \mathcal{A}$, we have a natural map
\begin{equation} \label{compmap}  F(X) \otimes_{F(\mathbbm{1})} F(Y) \to F(X \otimes Y).
\end{equation}
If $X$ is perfect, then, by a thick subcategory argument, 
\eqref{compmap} is an equivalence for every $Y \in \mathcal{A}$ and 
$F(X)$ is perfect as an $F(\mathbbm{1})$-module.

We will be interested in the following example: let $\Sp^{B S^1}$ denote the
$\infty$-category of spectra equipped with an $S^1$-action. Then $\THH(k)$
naturally defines a commutative algebra object in 
$\Sp^{B S^1}$ and we take $\mathcal{A} = \Mod_{\THH(k)}(\Sp^{B S^1})$. 
When $\mathcal{C}$ is any $k$-linear dg category, $\THH(\mathcal{C})$
defines an object of $\mathcal{A}$. We then consider the functor $F \colon  \mathcal{A} \to \Sp$ sending $X
\mapsto X^{tS^1}$. With this in mind, Theorem~\ref{BMthm} is implied by
the following result.
\begin{theorem} 
\label{perfectresult}
Let $k$ be a perfect field of characteristic $p > 0$. 
Any dualizable object in the symmetric monoidal $\infty$-category
$\Mod_{\THH(k)}(\Sp^{B S^1})$ is perfect. 
In particular, if $\mathcal{C}$ is a smooth and proper dg category over $k$,
then $\THH(\mathcal{C}) \in \Mod_{\THH(k)}(\Sp^{B S^1})$ is perfect. 
\end{theorem}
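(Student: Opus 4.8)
The plan is to reduce the statement to a concrete structural fact about modules over $\THH(k)$ in the category $\Sp^{BS^1}$, and ultimately to the computation of $\pi_*\THH(k)$ and the behaviour of the $S^1$-action. The key input is that, by Bökstedt, $\pi_*\THH(k) \simeq k[\sigma]$ is a polynomial ring on a degree $2$ class, and the crucial homotopy-theoretic fact is that the $S^1$-action on $\THH(k)$, while nontrivial, becomes understandable after passing to the Tate construction: $\pi_*\TP(k) \simeq W(k)[x^{\pm 1}]$ with $|x|=2$, as recalled in the introduction. So the first step is to recall the relevant algebraic model — I would work with the category $\Mod_{\THH(k)}(\Sp^{BS^1})$ and use the fact that $\THH(k)^{hS^1}$ and $\THH(k)^{tS^1} = \TP(k)$ are well understood, the latter being a (non-connective, $2$-periodic) even ring spectrum with $\pi_0 = W(k)$.

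The core of the argument is to show that a dualizable object $X \in \Mod_{\THH(k)}(\Sp^{BS^1})$ lies in the thick subcategory generated by the unit $\THH(k)$. First I would observe that dualizability is detected after applying the lax symmetric monoidal functors involved; in particular $X^{tS^1}$ is a dualizable — hence perfect, hence compact — module over $\TP(k)$, and similarly for the homotopy fixed points functor landing in $\Mod_{\THH(k)^{hS^1}}$. The strategy is then a descent/dévissage argument: one filters $X$ using the canonical $S^1$-equivariant structure. Concretely, for an $S^1$-spectrum, there is the isotropy/Tate square relating $X_{hS^1}$, $X^{hS^1}$, and $X^{tS^1}$, and one can try to build $X$ out of pieces controlled by modules over $\THH(k)_{hS^1}$, $\THH(k)^{hS^1}$, and $\TP(k)$. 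The point is that each of these base rings is sufficiently nice — here is where perfectness of $k$ and characteristic $p$ enter, guaranteeing that $\pi_* \TP(k) = W(k)[x^{\pm 1}]$ is a (graded) regular Noetherian ring, in fact that $W(k)$ has finite global dimension, so that compact = perfect = dualizable over these rings and every dualizable module is built from finitely many copies of the unit by finite colimits and retracts.

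In more detail, the main steps I would carry out are: (i) reduce to showing that the unit $\THH(k)$ generates $\Mod_{\THH(k)}(\Sp^{BS^1})$ as a localizing (equivalently, by dualizability, thick) subcategory containing all dualizables — i.e. show a dualizable $X$ is a retract of a finite cell $\THH(k)$-module in the equivariant sense; (ii) use that the forgetful functor $\Mod_{\THH(k)}(\Sp^{BS^1}) \to \Mod_{\THH(k)}(\Sp)$ together with the $S^1$-action data is conservative and that $\pi_*\THH(k)=k[\sigma]$ is Noetherian of finite global dimension (since $k$ is a field), so underlying dualizable objects are automatically perfect $\THH(k)$-modules with finitely generated homotopy; (iii) promote this to the equivariant category by analysing the (homotopy) orbit/fixed-point cofiber sequence and invoking that $\TP(k)$ has regular Noetherian homotopy, so the "Tate piece" is also perfect; (iv) assemble these to conclude $X$ is perfect in $\Mod_{\THH(k)}(\Sp^{BS^1})$. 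The final sentence of the theorem is then immediate: for $\Cscr$ smooth and proper over $k$, $\THH(\Cscr)$ is a dualizable object of $\Mod_{\THH(k)}(\Sp^{BS^1})$ (smooth and proper dg categories have dualizable $\THH$ relative to the base), so it is perfect, and then $\TP(\Cscr) = F(\THH(\Cscr))$ is perfect over $\TP(k)$ and the K\"unneth map \eqref{compmap} is an equivalence by the thick subcategory argument already recorded.

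The main obstacle I expect is step (iii): controlling the passage from the underlying (non-equivariant) statement — which is essentially formal once one knows $\pi_*\THH(k)$ is regular — to the genuinely $S^1$-equivariant statement. The $S^1$-action on $\THH(k)$ is nontrivial at the level of the homotopy groups' module structure, and a dualizable object of $\Mod_{\THH(k)}(\Sp^{BS^1})$ need not be "equivariantly cellular" for cheap reasons; one must use something special about this situation, namely the interaction of the $S^1$-Tate construction with the ring $W(k)[x^{\pm 1}]$ being $2$-periodic and regular. Making this rigorous — likely by exhibiting an explicit finite filtration of $X$ whose graded pieces are $\THH(k)$-modules induced up from the trivial action, or equivalently by a descent argument along $\THH(k) \to \THH(k)^{tC_p}$-type maps — is where the real work lies.
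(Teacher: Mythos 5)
There is a genuine gap, and it occurs at the two places where the actual content of the theorem lives. Your opening reduction --- ``dualizability is detected after applying the lax symmetric monoidal functors involved; in particular $X^{tS^1}$ is a dualizable, hence perfect, hence compact, $\TP(k)$-module'' --- is not a valid step: lax symmetric monoidal functors do not preserve dualizable objects, and perfectness of $X^{tS^1}$ over $\TP(k)$ is precisely the Blumberg--Mandell conclusion that the theorem is meant to deliver, so taking it as an input is circular. What one gets for free is weaker: by the unipotence theorem of Mathew--Naumann--Noel (Corollary~\ref{cor:nontrivunipotence}, using B\"okstedt's computation and the resulting $\pi_*\THH(k)^{hS^1}\simeq W(k)[x,\sigma]/(x\sigma-p)$), the category $\Mod_{\THH(k)}(\Sp^{BS^1})$ is symmetric monoidally equivalent to \emph{$x$-complete} $\THH(k)^{hS^1}$-modules, so a dualizable $X$ only gives $X^{hS^1}$ dualizable with respect to the completed tensor product --- not yet compact or perfect over $\THH(k)^{hS^1}$. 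This distinction is not pedantic: without regularity, dualizable-in-the-complete-category does not imply perfect, as the paper's examples show (invertible $K(1)$-local spectra, or $p$-complete $H\ZZ_p$-modules with non-finitely generated homotopy).

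Your step (iii), which you correctly identify as where the real work lies, is exactly the missing argument, and the mechanisms you propose are not the right ones: the map $\THH(k)\to\THH(k)^{tC_p}$ and any cyclotomic descent are irrelevant here (the theorem concerns Borel $S^1$-equivariant modules only, and the analogous statement in $\Mod_{\THH(k)}(\CycSp)$ is in fact \emph{false}, by the paper's supersingular K3 example), and there is no cheap finite filtration of a dualizable $X$ by induced modules. What the paper does instead is purely algebraic once unipotence is in place: dualizability of $X^{hS^1}$ in the complete category implies that $X^{hS^1}/x$ is a compact $\THH(k)^{hS^1}$-module (Proposition~\ref{descpf}); a graded Nakayama-type lemma for derived complete graded modules (Lemma~\ref{lem:finitelygen}) then shows $\pi_*X^{hS^1}$ is finitely generated; and over an $\EE_\infty$-ring with regular noetherian homotopy, finitely generated homotopy implies perfect (Proposition~\ref{whenperfect}) --- this is where the regularity of $W(k)[x,\sigma]/(x\sigma-p)$ enters. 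Unipotence (Proposition~\ref{prop:unipotencecriterion}) then transports perfectness of $X^{hS^1}$ back to perfectness of $X$ in $\Mod_{\THH(k)}(\Sp^{BS^1})$, giving Theorem~\ref{technicalperfect} and hence the statement. Your steps (i)--(ii) are fine but only yield non-equivariant perfectness of the underlying $\THH(k)$-module, which is far weaker than the equivariant statement needed.
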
 

We note also that our result implies that in Theorem~\ref{BMthm}, one
only needs one of $\mathcal{C}, \mathcal{D}$ to be smooth and proper, and one
can replace $\TP$ with $\THH^{tH}$ or with $\THH^{hH}$ for any closed subgroup
$H$ of $S^1$. In particular, taking $H=S^1$, we obtain a K\"unneth theorem for $\TC^{-} \stackrel{\mathrm{def}}{=}\THH^{hS^1}$.
We deduce below Theorem~\ref{perfectresult} directly
from the \emph{regularity} of $\pi_*\THH(k)$.
The argument also works for dg categories
over the localization $A$ of a ring of integers in a number field at a
prime over $p$ but with $\THH$ replaced with $\THH$ relative to $S^0[q]$ where
$S^0[q]\rightarrow HA$ sends $q$ to a chosen uniformizing parameter $\pi$
(see Corollary~\ref{cor:sq}).

We show, using the formula of Nikolaus--Scholze \cite{nikolaus-scholze}, that our strengthened version of the Blumberg--Mandell
theorem has an application to a finiteness theorem for topological cyclic
homology. When one works with smooth and proper schemes, stronger results are known by
work of Geisser and Hesselholt \cite{GH}, but our results seem to be new in
the generality of smooth and proper dg categories.

\begin{theorem} 
    If $\mathcal{C}$ is a smooth and proper dg category over a finite field $k$ of
    characteristic $p$, then $\TC(\mathcal{C})$ is perfect as an
    $H\mathbb{Z}_p$-module.\footnote{Note that $\TC(\mathcal{C})$ is an $H\mathbb{Z}_p$-module since $H\mathbb{Z}_p \simeq \tau_{\geq 0}\TC(k)$.}
\end{theorem}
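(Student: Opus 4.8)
The plan is to deduce this finiteness statement from Theorem~\ref{perfectresult} together with the Nikolaus--Scholze formula for $\TC$. Recall that for a bounded-below $p$-complete cyclotomic spectrum one has the fiber sequence
$$\TC(\Cscr) \longrightarrow \TC^-(\Cscr) \xrightarrow{\varphi - \mathrm{can}} \TP(\Cscr)$$
after $p$-completion, where $\TC^- = \THH^{hS^1}$ and $\TP = \THH^{tS^1}$. Since by Theorem~\ref{perfectresult} the object $\THH(\Cscr) \in \Mod_{\THH(k)}(\Sp^{BS^1})$ is perfect whenever $\Cscr$ is smooth and proper over $k$, applying the lax symmetric monoidal exact functors $X \mapsto X^{hS^1}$ and $X \mapsto X^{tS^1}$ (and using the thick-subcategory remark following \eqref{compmap}) shows that $\TC^-(\Cscr)$ is perfect as a $\TC^-(k)$-module and $\TP(\Cscr)$ is perfect as a $\TP(k)$-module. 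Thus $\TC(\Cscr)$ sits in a fiber sequence between a perfect $\TC^-(k)$-module and a perfect $\TP(k)$-module; the goal is to upgrade this to perfectness over $H\ZZ_p$.

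The key input is then the structure of $\TC^-(k)$ and $\TP(k)$ for $k$ a finite field. We have $\pi_*\TP(k) \simeq W(k)[\sigma^{\pm 1}]$ with $|\sigma| = 2$ and $\pi_*\TC^-(k) \simeq W(k)[u,\sigma]/(u\sigma - p)$ with $|u| = -2$, $|\sigma| = 2$; in particular both are regular Noetherian, and both are already $p$-complete since $W(k) = \ZZ_p$ for $k = \FF_p$ (more generally $W(k)$ is finite free over $\ZZ_p$). First I would observe that a perfect $\TP(k)$-module $M$ has homotopy groups that are finitely generated over $W(k)[\sigma^{\pm 1}]$, hence finitely generated over $\ZZ_p$ in each degree and $2$-periodic; applying the forgetful functor along $H W(k) \to \TP(k)$ (which exists since $\tau_{\geq 0}\TP(k)$ receives a map from $HW(k)$, or directly since $\TP(k)$ is an $E_\infty$-$HW(k)$-algebra via $\tau_{\geq 0}$) shows $M$ is perfect as an $HW(k)$-module, hence as an $H\ZZ_p$-module since $HW(k)$ is perfect over $H\ZZ_p$. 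The same argument applies to $\TC^-(k)$. Therefore $\TC^-(\Cscr)$ and $\TP(\Cscr)$ are both perfect $H\ZZ_p$-modules, and since perfect $H\ZZ_p$-modules form a thick subcategory of $\Mod_{H\ZZ_p}$, the fiber $\TC(\Cscr)$ is a perfect $H\ZZ_p$-module as well, using that $\TC(\Cscr)$ is already $p$-complete (equivalently, an $H\ZZ_p$-module via $H\ZZ_p \simeq \tau_{\geq 0}\TC(k)$ as in the footnote) so that the $p$-completed fiber sequence computes $\TC$ on the nose.

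The main obstacle I anticipate is the bookkeeping around $p$-completion and the passage from "perfect over $\TP(k)$" to "perfect over $H\ZZ_p$". One must be careful that Theorem~\ref{perfectresult} is an integral statement about $\Mod_{\THH(k)}(\Sp^{BS^1})$, so $\TP(\Cscr)$ is genuinely perfect over $\TP(k)$ before any completion; the Nikolaus--Scholze fiber sequence, however, is only valid after $p$-completion, so one needs to know $\TP(\Cscr)$, $\TC^-(\Cscr)$ are already $p$-complete — which follows because they are perfect modules over the $p$-complete rings $\TP(k)$, $\TC^-(k)$. A secondary subtlety is verifying that $\TP(k)$ and $\TC^-(k)$ are perfect, or at least that their module categories' compact objects map to perfect $H\ZZ_p$-modules; this rests on the explicit presentations above and the fact that $W(k)[\sigma^{\pm1}]$ and $W(k)[u,\sigma]/(u\sigma-p)$ are regular of finite Krull dimension, so perfect complexes have finitely generated homology, together with $HW(k)$ being a perfect $H\ZZ_p$-module (finite free, since $k$ is finite). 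With these points in hand the argument is a short thick-subcategory chase, so I do not expect any deeper difficulty.
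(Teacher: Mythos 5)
There is a genuine gap, and it sits at the heart of the theorem. Your claim that a perfect $\TP(k)$-module becomes a perfect $HW(k)$-module (hence $H\ZZ_p$-module) by restriction of scalars along $HW(k)\to\TP(k)$ is false: restriction of scalars preserves perfectness only when the target ring is itself perfect over the source, and $\TP(k)$ is not perfect over $HW(k)$ — its homotopy $W(k)[\sigma^{\pm 1}]$ is nonzero in every even degree, whereas a perfect $H\ZZ_p$-module must have homotopy that is finitely generated \emph{and bounded} (by Proposition~\ref{whenperfect} applied to $H\ZZ_p$, finite generation of $\bigoplus_i\pi_i$ forces vanishing outside finitely many degrees). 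The same applies to $\TC^-(k)$, whose homotopy $W(k)[u,\sigma]/(u\sigma-p)$ is unbounded in both directions; indeed the paper explicitly notes that the finiteness statement fails for $\THH(\Cscr)^{hS^1}$ already when $\Cscr=\Perf(k)$. So neither $\TC^-(\Cscr)$ nor $\TP(\Cscr)$ is a perfect $H\ZZ_p$-module, and your thick-subcategory chase over $H\ZZ_p$ cannot get off the ground. What your argument does correctly establish is that $\pi_i\TC^-(\Cscr)$ and $\pi_i\TP(\Cscr)$ are finitely generated $\ZZ_p$-modules in each degree and that the Nikolaus--Scholze fiber sequence applies; but finite generation degreewise is strictly weaker than perfectness.

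The missing ingredient is the asymptotic interplay between $\varphi$ and $\mathrm{can}$, which is what the paper uses to force the fiber to have \emph{bounded} homotopy. Using that $\TC^-(\Cscr)$ is perfect over $\TC^-(k)$ together with the explicit structure of $\TC^-(k)$ and the symmetric monoidal naturality argument (Propositions~\ref{symmonprop} and~\ref{TCplus}), one shows: for $i\gg 0$ the map $\varphi\colon\pi_i\TC^-(\Cscr)\to\pi_i\TP(\Cscr)$ is an isomorphism while $\mathrm{can}$ is divisible by $p$ (since $\mathrm{can}(\sigma)=p x^{-1}$ up to units), and for $i\ll 0$ the roles are reversed. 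Since the homotopy groups are finitely generated $\ZZ_p$-modules, divisibility by $p$ makes $\varphi^{-1}\mathrm{can}$ (resp.\ $\mathrm{can}^{-1}\varphi$) topologically nilpotent, so $\varphi-\mathrm{can}$ is an isomorphism on $\pi_i$ for $|i|\gg 0$. Only then does the equalizer formula \eqref{eqformula} give that $\TC(\Cscr)$ has finitely generated homotopy concentrated in a bounded range, hence is perfect over $H\ZZ_p$. Note also that the isomorphism $\varphi$ in high degrees is not formal: it requires Proposition~\ref{TCplus}, i.e.\ smooth-properness of $\Cscr$ and the dualizability/natural-transformation trick, not just perfectness of the individual terms. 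Without this step the statement would simply be wrong, as the example $\Cscr=\Perf(k)$ and the functor $\TC^-$ shows.
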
 

Finally, given the theorems above and the fact that for $\Cscr$ smooth and
proper $\THH(\Cscr)$ is dualizable
as a $\THH(k)$-module spectrum in the $\infty$-category $\CycSp$ of cyclotomic
spectra, a very natural question is to ask whether $\THH(\Cscr)$ is perfect as a $\THH(k)$-module in
$\CycSp$. We give an example to show that this is not the case.

\begin{theorem}
    If $X$ is a supersingular K3 surface over a perfect field $k$ of
    characteristic $p>0$, then $\THH(X)$ is not perfect in
    $\Mod_{\THH(k)}(\CycSp)$.
\end{theorem}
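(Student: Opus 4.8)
The plan is to detect the failure of perfectness by applying topological restriction homology $\TR$, which sees the de Rham--Witt sheaves $W\Omega^i_X$ individually---unlike $\TP$, which only sees the crystalline complex. First I would record that $\TR$, restricted to bounded below cyclotomic spectra, is a lax symmetric monoidal, exact functor to $\Sp$: by the Nikolaus--Scholze formula \cite{nikolaus-scholze} it is a limit, along the restriction maps, of functors $\TR^{(n)}$, each of which is a finite limit of the exact functors $Z \mapsto Z^{hC_{p^m}}$ and $Z \mapsto Z^{tC_{p^m}}$, and a limit of exact functors between stable $\infty$-categories is exact (limits preserve fiber sequences). Since $\THH(k)$ is a commutative algebra in $\CycSp$ with $\TR(\THH(k)) \simeq \TR(k) \simeq HW(k)$ (Hesselholt--Madsen), passing to module categories yields a lax symmetric monoidal, exact functor
$$\TR \colon \Mod_{\THH(k)}(\CycSp^{\mathrm{bb}}) \longrightarrow \Mod_{HW(k)}(\Sp) \simeq \mathcal{D}(W(k)),$$
which therefore carries the thick subcategory generated by the unit $\THH(k)$ into the perfect complexes over $W(k)$. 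Because $\THH(X)$ and $\THH(k)$ are bounded below (for instance $\THH(X)$ is even a perfect $\THH(k)$-module by Theorem~\ref{perfectresult}), it would follow that if $\THH(X)$ were perfect in $\Mod_{\THH(k)}(\CycSp)$, the complex $\TR(X)$ would be perfect over the noetherian ring $W(k)$; in particular each of its homotopy groups, and each subquotient of such a group, would be a finitely generated $W(k)$-module.

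Next I would pin down the bottom homotopy group of $\TR(X)$. The presheaf $U \mapsto \TR(\OX(U))$ on $X$ satisfies Zariski descent (as do $\THH$, each $\THH(-)^{C_{p^m}}$, and hence the limit $\TR$), and for any $\FF_p$-algebra $A$ the spectrum $\TR(A)$ is connective with $\pi_0\TR(A) \simeq W(A)$, the $p$-typical Witt vectors. Hence the descent spectral sequence
$$E_2^{s,t} = H^s\bigl(X, \underline{\pi}_t \TR(\OX)\bigr) \Longrightarrow \pi_{t-s}\TR(X)$$
has $E_2^{s,t} = 0$ for $t < 0$ and $E_2^{s,0} = H^s(X, W\OX)$, with $W\OX$ the sheaf of Witt vectors. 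Since $\dim X = 2$, the unique nonzero term in total degree $\le -2$ is $E_2^{2,0} = H^2(X, W\OX)$, sitting in total degree $-2$; no differential can meet it (incoming ones would emanate from the vanishing region $t < 0$, outgoing ones would land in cohomological degrees $\ge 4$). Therefore $\pi_{-2}\TR(X) \cong H^2(X, W\OX)$ and $\pi_n\TR(X) = 0$ for $n < -2$.

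The proof would finish by invoking Artin's theorem on supersingular K3 surfaces, together with the Artin--Mazur description of $H^2(X, W\OX)$ as the Dieudonn\'e module of the formal Brauer group $\widehat{\mathrm{Br}}_X$: the $W(k)$-module $H^2(X, W\OX)$ is finitely generated if and only if $\widehat{\mathrm{Br}}_X$ has finite height, i.e.\ if and only if $X$ is \emph{not} supersingular; for $X$ supersingular one has $\widehat{\mathrm{Br}}_X \simeq \widehat{\GG}_a$, so $H^2(X, W\OX)$ is a $k$-vector space of countably infinite dimension and hence not finitely generated over $W(k)$. This contradicts the conclusion of the first step, so $\THH(X)$ is not perfect in $\Mod_{\THH(k)}(\CycSp)$. (It is essential to use $\TR$ rather than $\TP$, $\TC^-$, or $\TC$: for those invariants $\THH(X)$ remains perfect---over $\TP(k)$ by Theorem~\ref{BMthm}(1) and over $\TC^-(k)$ by Theorem~\ref{perfectresult}---and, dually, the finiteness of $R\Gamma_{\mathrm{crys}}(X/W) \simeq R\Gamma(X, W\Omega^\bullet_X)$ as a $W$-complex rests on the de Rham--Witt differential cancelling the separately non-finitely-generated groups $H^j(X, W\Omega^i_X)$, and that differential is not part of the structure $\TR$ retains.)

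The main obstacle is the middle step. It requires the Hesselholt--Madsen input on $\TR$ of a smooth algebra---though only in the mild form ``$\TR(A)$ is connective with $\pi_0\TR(A) = W(A)$'', which is classical for $\TR_0$---and a verification that $\TR$ of the proper surface $X$ is correctly computed by Zariski descent, so that the spectral sequence is available and convergent. Equivalently, the crux is the clean statement that the lowest homotopy group of $\TR(X)$ recovers the Dieudonn\'e module of the Artin--Mazur formal group of $X$: this is exactly where the supersingular non-finiteness is classically recorded, and one could instead cite this identification directly and bypass the spectral sequence.
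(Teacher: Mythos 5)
Your argument is correct in outline and takes a genuinely different route from the paper. The paper works with $\TF=\lim_F\TR^n$: there the $E_2$-terms of the descent spectral sequence are the groups $\lim_{n,F}\H^s(X,W_n\Omega^t_X)$ appearing in the Illusie--Raynaud conjugate spectral sequence, so the proof must import their finiteness and torsion results and then rule out the possibility that a $d^2$-differential repairs the non-finite generation, concluding that $\TF_{-1}(X)$ is not finitely generated over $W(k)$ (using $\TF_*(k)\simeq W(k)[\sigma]$). You instead use $\TR=\lim_R\TR^n$, for which the limit along the restriction maps kills $\sigma$, so $\TR(k)\simeq HW(k)$ and, at the level of sheaves, $\underline{\pi}_0\simeq W\OX$ with vanishing negative homotopy sheaves; the offending group $\H^2(X,W\OX)$ then sits in the extreme corner $\pi_{-2}\TR(X)$ of the descent spectral sequence, untouched by any differential, and its non-finite generation over $W(k)$ is exactly the classical Artin--Mazur statement for supersingular K3 surfaces (it is $k[[x]]$ as in Figure~\ref{fig:k3cohomology}; your parenthetical ``countably infinite dimensional'' is inaccurate, but only non-finite generation is needed). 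What your route buys is the elimination of both the differential analysis and the Illusie--Raynaud input; what it requires instead, and what you should make explicit, are (i) that $\TR$, like $\TF$, is an exact lax symmetric monoidal functor on bounded below cyclotomic spectra (via the Nikolaus--Scholze comparison with genuine fixed points and the multiplicativity of the $R$-maps), so that perfectness of $\THH(X)$ in $\Mod_{\THH(k)}(\CycSp)$ forces $\TR(X)$ to be a perfect $HW(k)$-module, (ii) the Hesselholt--Madsen computation that $\TR_*(k)\simeq W(k)$ concentrated in degree $0$ (in contrast to $\TF_*(k)\simeq W(k)[\sigma]$), and (iii) Zariski descent and strong convergence for $\TR$ of the surface, with the relevant $\lim^1$-terms vanishing by surjectivity of the de Rham--Witt restriction maps---all of which hold and are on the same footing as the analogous assertions the paper makes for $\TF$.
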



\subsection*{Acknowledgments}
We would like to thank Bhargav Bhatt, Lars Hesselholt, and Peter Scholze for helpful
discussions. 
We also thank Markus Land and the referee for extremely helpful and detailed
comments on earlier versions.
Finally, the second author thanks the University of Bonn for its hospitality during
a week long visit.

\section{Symmetric monoidal $\infty$-categories}\label{sec:prelim}

As indicated in the introduction, our approach is based on the notion of
\emph{perfectness} (Definition~\ref{perfect}) in a stably symmetric monoidal 
$\infty$-category.
Any perfect object is dualizable, but in general the converse need not hold. 
In this section, we give a criterion (Theorem~\ref{technicalperfect})
for when dualizable objects \emph{are} perfect in symmetric monoidal
$\infty$-categories of spectra with the action of
a compact connected Lie group.

\newcommand{\fun}{\mathrm{Fun}}

We will need some preliminaries on presentably symmetric monoidal
stable $\infty$-categories\footnote{By \emph{presentably symmetric monoidal} we mean that the underlying $\infty$-category is presentable and the tensor products preserves colimits in both variables seperately.}. We refer to \cite{MNN17} for more details. 
Let $(\mathcal{C}, \otimes, \mathbbm{1})$ be one such. 
In this case, one has an adjunction
\begin{equation}  \label{adj} \Mod_{
\mathrm{End}_{\mathcal{C}}(\mathbbm{1})} \rightleftarrows
\mathcal{C},  \end{equation}
where the left adjoint $\cdot
\otimes_{\mathrm{End}_{\mathcal{C}}(\mathbbm{1})}\mathbbm{1} \colon  
\Mod_{ \mathrm{End}_{\mathcal{C}}(\mathbbm{1})}
 \to
\mathcal{C}$ is symmetric monoidal and the right 
adjoint $\Hom_{\mathcal{C}}( \mathbbm{1}, \cdot) \colon  
\mathcal{C} \to \Mod_{ \mathrm{End}_{\mathcal{C}}(\mathbbm{1})}
$ is lax symmetric monoidal. 
Here, $\End_{\Cscr}(\mathbbm{1})$ is an $\EE_\infty$-ring spectrum and $\Mod_{ \mathrm{End}_{\mathcal{C}}(\mathbbm{1})}$ is the $\infty$-category
of $\mathrm{End}_{\mathcal{C}}(\mathbbm{1})$-modules in $\Sp$. 
One knows (cf. \cite{SchwedeShipley} and \cite[Section~7.1.2]{HA}) that the adjunction is an
equivalence precisely when $\mathbbm{1} \in \mathcal{C}$ is a compact generator. 
We need a definition which applies in some situations when $ \mathbbm{1}$ is a generator, but is no longer
required to be compact. 

\begin{definition}[{Cf. \cite[Def. 7.7]{MNN17}}] 
The presentably symmetric monoidal stable $\infty$-category
$\mathcal{C}$ is \emph{unipotent} if 
\eqref{adj} is a localization, i.e., if the counit map
$\Hom_{\mathcal{C}}(\mathbbm{1}, X)
\otimes_{\mathrm{End}_{\mathcal{C}}(\mathbbm{1}) } \mathbbm{1} \to X$ is an
equivalence for every $X \in \mathcal{C}.$
\end{definition} 

\begin{lemma}\label{lem:locuni}
    If $\mathcal{C}$ is a symmetric monoidal localization (see \cite[Definition
    2.2.1.6 and Example 2.2.1.7]{HA}) of the module category of an $\EE_\infty$-ring, then
    $\mathcal{C}$ is unipotent. It follows that if $\mathcal{C}$ is unipotent and if
    $A \in \CAlg(\mathcal{C})$, then $\Mod_A(\mathcal{C})$ is unipotent too. 
\end{lemma}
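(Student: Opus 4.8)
The plan is to establish the two assertions in sequence, with the first serving as the base case. For the first statement, suppose $\mathcal{C}$ is a symmetric monoidal localization of $\Mod_R$ for some $\mathbb{E}_\infty$-ring $R$. Let $L \colon \Mod_R \to \mathcal{C}$ denote the localization functor (which is symmetric monoidal) and let $\iota \colon \mathcal{C} \hookrightarrow \Mod_R$ be the fully faithful right adjoint. First I would observe that $\mathbbm{1}_{\mathcal{C}} \simeq L(R)$, so $\End_{\mathcal{C}}(\mathbbm{1})= \Hom_{\mathcal{C}}(L(R), L(R)) \simeq \Hom_{\Mod_R}(R, \iota L(R)) \simeq \pi_0$-truncation aside, the endomorphism ring is $\iota L(R)$ as an $\mathbb{E}_\infty$-$R$-algebra. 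The key point is then that for the composite $\Mod_{\End_{\mathcal{C}}(\mathbbm{1})} \to \mathcal{C}$, the counit $\Hom_{\mathcal{C}}(\mathbbm{1}, X) \otimes_{\End_{\mathcal{C}}(\mathbbm{1})} \mathbbm{1} \to X$ can be checked after applying the conservative functor $\iota$ (localizations are conservative on the local objects, and $\iota$ is fully faithful). Under $\iota$, both sides become computations in $\Mod_R$: writing $S = \iota L(R)$, one has $\iota(\mathbbm{1}_{\mathcal{C}}) = S$, and the module $\Hom_{\mathcal{C}}(\mathbbm{1}, X)$ over $S$ is just $\iota(X)$ viewed as an $S$-module via the $\mathbb{E}_\infty$-map $R \to S$. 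So the counit becomes the map $\iota(X) \otimes_S S \to \iota(X)$, which is visibly an equivalence. Hence $\mathcal{C}$ is unipotent.

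For the second statement, let $\mathcal{C}$ be unipotent and $A \in \CAlg(\mathcal{C})$; I want to show $\Mod_A(\mathcal{C})$ is unipotent. The strategy is to present $\Mod_A(\mathcal{C})$ as itself a symmetric monoidal localization of a module category over an $\mathbb{E}_\infty$-ring, so that the first part applies and no separate argument is needed. Since $\mathcal{C}$ is unipotent, the adjunction \eqref{adj} exhibits $\mathcal{C}$ as a localization of $\Mod_{E}$ where $E = \End_{\mathcal{C}}(\mathbbm{1})$; more precisely $\mathcal{C}$ is the localization of $\Mod_E$ at the class of maps inverted by $\cdot \otimes_E \mathbbm{1}$, and this is a symmetric monoidal localization. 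Now let $B = \Hom_{\mathcal{C}}(\mathbbm{1}, A) \in \CAlg(\Mod_E)$ be the corresponding $\mathbb{E}_\infty$-$E$-algebra. Base change along the symmetric monoidal localization functor gives a symmetric monoidal functor $\Mod_B(\Mod_E) = \Mod_B \to \Mod_A(\mathcal{C})$, and I would check that this again realizes $\Mod_A(\mathcal{C})$ as a symmetric monoidal localization of $\Mod_B$: concretely, $\Mod_A(\mathcal{C})$ is the localization of $\Mod_B$ at those $B$-module maps which become equivalences in $\Mod_A(\mathcal{C})$, and one verifies this is a symmetric monoidal (indeed smashing, because $\mathcal{C}$ was) localization by transporting the idempotent algebra witnessing the localization $\Mod_E \to \mathcal{C}$ up to $\Mod_B$. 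Granting this, the first part of the lemma applied to $\Mod_A(\mathcal{C})$ over $\Mod_B$ shows $\Mod_A(\mathcal{C})$ is unipotent.

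The main obstacle I anticipate is the bookkeeping in the second part: making precise that ``$\Mod_A(\mathcal{C})$ is a symmetric monoidal localization of $\Mod_B$'' as a well-defined symmetric monoidal adjunction, i.e. that the relevant localization functor $\Mod_B \to \Mod_A(\mathcal{C})$ has a fully faithful right adjoint compatible with the tensor structure. The cleanest route is via idempotent $\mathbb{E}_\infty$-algebras in the sense of \cite[Section 4.8.2]{HA}: a symmetric monoidal (smashing) localization of $\Mod_E$ is the same data as an idempotent $E$-algebra $E'$, and then the localization of $\Mod_B$ corresponding to $B \otimes_E E'$ is exactly $\Mod_{A}(\mathcal{C})$ — this should follow from the fact that tensoring with an idempotent algebra commutes with the further base change $E \to B$. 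Once this identification is in hand, everything reduces to the first part. Alternatively, one can avoid idempotent algebras altogether and argue directly: the counit $\Hom_{\Mod_A(\mathcal{C})}(\mathbbm{1}_{\Mod_A(\mathcal{C})}, M) \otimes_{\End(A)} A \to M$ for $M \in \Mod_A(\mathcal{C})$ can be checked on underlying objects in $\mathcal{C}$, where it follows directly from unipotence of $\mathcal{C}$ together with the projection formula $\Hom_{\mathcal{C}}(\mathbbm{1}, M) \otimes_{\mathbbm{1}} A \simeq \Hom_{\Mod_A(\mathcal{C})}(A, M \otimes A)$ for $A$-modules $M$; I would fall back on this more hands-on computation if the idempotent-algebra formalism proves cumbersome to set up.
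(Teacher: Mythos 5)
Your strategy for the first assertion---check the counit of \eqref{adj} after applying the fully faithful right adjoint $\iota \colon \Cscr \hookrightarrow \Mod_R$---is reasonable, and the identification of the underlying $R$-module of $\Hom_{\Cscr}(\mathbbm{1},X)$ with $\iota(X)$ is correct (it uses that $R \to S = \iota L(R)$ is an $L$-equivalence and that $\iota(X)$ is local). The gap is the next sentence: ``the counit becomes $\iota(X)\otimes_S S \to \iota(X)$, which is visibly an equivalence.'' This silently assumes that $\iota$ carries $\Hom_{\Cscr}(\mathbbm{1},X)\otimes_S \mathbbm{1}$ --- a colimit formed in $\Cscr$, namely the value of the colimit-preserving left adjoint of \eqref{adj} --- to the relative tensor product $\iota(X)\otimes_S \iota(\mathbbm{1})$ computed in $\Mod_R$. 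Since $\iota$ is only lax symmetric monoidal and does not preserve colimits, this is false in general: for $\Cscr$ the $p$-complete spectra over $R=\SS$, one has $S \simeq \SS^{\wedge}_p$ and $\iota(M \otimes_S \mathbbm{1})$ is the $p$-completion of the underlying spectrum of $M$, not $M$ itself. What must be proved --- and is the entire content of the first statement --- is that the left adjoint of \eqref{adj} is $L$ composed with restriction along $R \to S$, equivalently that $\Hom_S(M, \Hom_{\Cscr}(\mathbbm{1},X)) \simeq \Hom_R(M, \iota X)$ for every $S$-module $M$ and every local $X$. This is exactly where the hypothesis that the localization is \emph{compatible with the tensor product} enters (bar-resolve $M$ by free $S$-modules and use that $L$-equivalences are closed under tensoring with any module, so all structure maps become equivalences after mapping into the local object $\iota X$); your write-up never invokes this hypothesis, which is the symptom of the gap. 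Once that identification is made, your computation does close, since the underlying $R$-module of $\Hom_{\Cscr}(\mathbbm{1},X)$ is already local and the counit becomes $L(\iota X) \to X$. (The paper argues differently, factoring the adjunction as $\Mod_R \rightleftarrows \Mod_S \rightleftarrows \Cscr$ and reasoning with full faithfulness of the right adjoints; in either approach, pinning down how the Schwede--Shipley adjunction interacts with the ambient $\Mod_R$ is the crux and cannot be waved through.)

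For the second assertion your architecture agrees with the paper's: set $B = \Hom_{\Cscr}(\mathbbm{1},A)$ (the paper calls it $T$), exhibit $\Mod_A(\Cscr)$ as a symmetric monoidal localization of $\Mod_B$, and quote the first part. But the verification you propose does not work as stated: you assert the localization is smashing (``indeed smashing, because $\mathcal{C}$ was'') and want to transport an idempotent $\EE_\infty$-algebra. Neither being a symmetric monoidal localization of a module category nor being unipotent implies smashing: $p$-complete spectra are again a counterexample, and the case the lemma is actually used for --- $(x_1,\dots,x_t)$-complete $R^{hG}$-modules as in Corollary~\ref{cor:nontrivunipotence} --- is a completion, hence not smashing. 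So there is no idempotent algebra witnessing the localization, and that route collapses. Your fallback ``direct'' argument is the right idea, but the projection formula you quote is not a correct statement as written; what is needed is precisely that the counit of $\Mod_B \rightleftarrows \Mod_A(\Cscr)$ is an equivalence, checked on underlying objects of $\Cscr$ using unipotence of $\Cscr$ and the identification of $\Hom_{\Mod_A(\Cscr)}(A,M)$ with $\Hom_{\Cscr}(\mathbbm{1},M)$ --- in effect rerunning the first part with $\Mod_R$ replaced by $\Mod_{\End_{\Cscr}(\mathbbm{1})}$, or, as in the paper, identifying $\Mod_A(\Cscr)$ inside $\Mod_B$ with the full subcategory of $B$-modules whose underlying object lies in $\Cscr$ and checking this is a symmetric monoidal localization.
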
 

\begin{proof}
    Suppose that $\Cscr$ is a symmetric monoidal localization of $\Mod_R$ where
    $R$ is an $\EE_\infty$-ring spectrum. Write $S$ for the $\EE_\infty$-ring
    spectrum $\End_\Cscr(\mathbbm{1})$. The adjunction~\eqref{adj}
    extends to the left to a sequence of adjunctions
    $$\Mod_R\rightleftarrows\Mod_{S}\rightleftarrows\Cscr,$$
    where $\Mod_R\rightarrow\Mod_{S}$ is extension of
    scalars along a map $R\rightarrow S$ of
    $\EE_\infty$-ring spectra. Since the composition of the right adjoints is
    fully faithful, and sends $\mathbbm{1}$ to $S$, viewed as an
    $R$-module, and because $\Mod_S\rightarrow\Mod_R$ preserves colimits, we see that the right adjoint $\Mod_S\rightarrow\Mod_R$ is
    fully faithful. This implies that $\Cscr\rightarrow\Mod_S$ is fully
    faithful. This proves the first statement. For the second, write $T$ for
    the spectrum of maps from $\mathbbm{1}$ to $A$ in $\Cscr$. This is an
    $\EE_\infty$-algebra over $S$ and $\Mod_A(\Cscr)$ is a localization of
    $\Mod_T(\Mod_S)\we\Mod_T$ if $\Cscr$ is unipotent.
\end{proof}

We will study unipotence in the following scenario. Let $X$ be a space. 
We can then form the presentable stable $\infty$-category 
$\Sp^X = \fun(X, \Sp)$ of spectra parametrized over $X$. 
We regard $\Sp^X$ as a symmetric 
monoidal $\infty$-category with the pointwise tensor product. 
Given an $\EE_\infty$-algebra $R$ in $\Sp^X$, we can
form the presentably symmetric monoidal stable $\infty$-category $\Mod_R(\Sp^X)$ of $R$-modules in
$\Sp^X$. When $R$ is given by a constant diagram, then there is a natural
equivalence $\Mod_R(\Sp^X)\we\fun(X, \Mod_R)$ and the endomorphisms of the unit are given by the function
spectrum $F(X_+;R)$.
In this case, one has the following unipotence criterion. 

\begin{theorem}[{Cf. \cite[Theorem 7.35]{MNN17}}] 
\label{unipthm}
Let $R$ be an $\EE_\infty$-ring such that $\pi_* (R)$ is concentrated in even
degrees.  Let $G$ be a compact, connected Lie group such that 
the cohomology $\H^*(BG; \pi_0 (R))$ is a polynomial algebra on
even-dimensional generators. 
In this case, the presentably symmetric monoidal $\infty$-category  $\fun(BG, \Mod_R)$ is unipotent. 
\end{theorem}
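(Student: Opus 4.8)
The plan is to verify directly that for $\mathcal{C} := \fun(BG, \Mod_R)$ --- with $R^{BG} := \End_{\mathcal C}(\mathbbm{1}) = F(BG_+; R)$, and $\Gamma := \Hom_{\mathcal C}(\mathbbm{1},-) = (-)^{hG}$ and $L := -\otimes_{R^{BG}}\mathbbm{1}$ the functors in the adjunction~\eqref{adj} --- the counit $L\Gamma(X)\to X$ is an equivalence for every $X\in\mathcal{C}$. First I would record two algebraic consequences of the hypotheses. Since $\pi_\ast R$ is even and $\H^*(BG;\pi_0 R)\cong\pi_0(R)[x_1,\dots,x_r]$ is a free $\pi_0(R)$-module on even-degree generators $x_i$, say $|x_i|=2d_i$, the homotopy fixed point spectral sequence $\H^s(BG;\pi_t R)\Rightarrow\pi_{t-s}R^{BG}$ has $E_2$-page concentrated in even total degree, hence degenerates with no extension problems, so that $\pi_\ast R^{BG}\cong\pi_\ast(R)[[x_1,\dots,x_r]]$, a power series ring over $\pi_\ast R$ on the Euler classes. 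Moreover the augmentation $\epsilon\colon R^{BG}\to R$ (restriction along the basepoint $\ast\hookrightarrow BG$) annihilates each $x_i$ for degree reasons, and since $(x_1,\dots,x_r)$ is a regular sequence in $\pi_\ast(R)[[\vec x]]$ with quotient $\pi_\ast R$, the (again even-degenerating) Koszul spectral sequence shows that $\epsilon$ exhibits $R$ as the perfect $R^{BG}$-module $R^{BG}/(x_1,\dots,x_r)$.

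Next I would exploit completeness. Since $BG$ is connected, restriction along $\ast\hookrightarrow BG$ is a conservative functor $\mathcal{C}\to\Mod_R$; as $x_i$ restricts to $0\in\pi_\ast R$, the localization $\mathbbm{1}[x_i^{-1}]$ has vanishing underlying $R$-module and is therefore $0$ in $\mathcal{C}$. Consequently $\mathrm{map}_{\mathcal C}(\mathbbm{1}[x_i^{-1}],-)=0$, so every object of $\mathcal{C}$ is derived complete with respect to the ideal $(x_1,\dots,x_r)\subseteq\pi_\ast\End_{\mathcal C}(\mathbbm{1})$, and a map between such objects is an equivalence as soon as it becomes one after $-\otimes_{\mathbbm{1}}\mathrm{Kos}(\vec x)$, where $\mathrm{Kos}(\vec x):=\mathbbm{1}/(x_1,\dots,x_r)$ is the Koszul complex on the $x_i$ acting on $\mathbbm{1}$ --- a perfect object of $\mathcal{C}$, since it lies in the thick subcategory generated by $\mathbbm 1$. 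Applying this to the counit of $X$ and to $L\Gamma(X)$ (both complete), and using that~\eqref{compmap} is an equivalence for the perfect object $\mathrm{Kos}(\vec x)$ to identify $\Gamma(X\otimes\mathrm{Kos}(\vec x))\simeq\Gamma(X)\otimes_{R^{BG}}(R^{BG}/(\vec x))$ compatibly with the counit, one reduces the theorem to showing that the counit is an equivalence for every object of the form $X\otimes\mathrm{Kos}(\vec x)$.

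To finish, I would identify $\mathrm{Kos}(\vec x)$ with the cofree module $\mathrm{Coind}_e^G(R)$. The unit $\mathbbm{1}\to\mathrm{Coind}_e^G(R)$ of the adjunction $\mathrm{res}_e\dashv\mathrm{Coind}_e^G$ kills each $x_i$ (as $x_i$ acts on $\mathrm{Coind}_e^G(R)$ through $\epsilon$, by the adjunction), so it factors through a map $\mathrm{Kos}(\vec x)\to\mathrm{Coind}_e^G(R)$; restricting along the basepoint, both sides become $\bigoplus_{I\subseteq\{1,\dots,r\}}\Sigma^{\sum_{i\in I}(1-2d_i)}R$ --- for the right-hand side using that $\H^*(G;\pi_0 R)$ is the exterior algebra on classes of degree $2d_i-1$ transgressing the $x_i$ (Borel's transgression theorem, since $\H^*(BG;\pi_0R)$ is polynomial) --- and one checks the map is an equivalence, hence is an equivalence by conservativity. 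The projection formula for the finite homogeneous space $G$ then gives $X\otimes\mathrm{Kos}(\vec x)\simeq\mathrm{Coind}_e^G(\mathrm{res}_e X)$, so it remains to treat cofree modules $\mathrm{Coind}_e^G(M)$. For these, $\Gamma(\mathrm{Coind}_e^G M)\simeq M$ with $R^{BG}$ acting through $\epsilon$, whence $L\Gamma(\mathrm{Coind}_e^G M)\simeq M\otimes_R(R\otimes_{R^{BG}}\mathbbm{1})\simeq M\otimes_R\mathrm{Kos}(\vec x)\simeq\mathrm{Coind}_e^G(M)$; since both $L\Gamma\circ\mathrm{Coind}_e^G$ and $\mathrm{Coind}_e^G$ preserve colimits in $M$ and the counit is an equivalence at the compact generator $M=R$ (where $\mathrm{Coind}_e^G(R)\simeq\mathrm{Kos}(\vec x)$ is perfect), the counit is an equivalence for all $M$. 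This proves $\mathcal{C}$ is unipotent, and hence any dualizable object of it is perfect.

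The step I expect to be the main obstacle is the identification $\mathrm{Kos}(\vec x)\simeq\mathrm{Coind}_e^G(R)$ --- i.e.\ that killing the Euler classes lands precisely on the cofree part of $\fun(BG,\Mod_R)$. This is where the polynomial hypothesis on $\H^*(BG;\pi_0 R)$ enters essentially (through the resulting exterior structure of $\H^*(G;\pi_0 R)$), and checking that the natural comparison map is an equivalence --- rather than merely an abstract isomorphism on homotopy groups --- requires some care with the transgression. Consistently with this, the statement fails when $\H^*(BG;\pi_0 R)$ is not polynomial, so any argument must invoke the hypothesis at precisely this point. One could alternatively reduce to a maximal torus $T\subseteq G$, where the polynomial hypothesis is automatic, by a descent argument along $\fun(BG,\Mod_R)\to\fun(BT,\Mod_R)$ using that $R^{BT}$ is a finite free --- hence descendable --- algebra over $R^{BG}$ together with Lemma~\ref{lem:locuni}; but the direct approach above seems cleaner.
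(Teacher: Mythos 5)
Your argument is essentially the same as the one the paper relies on: the paper does not reprove Theorem~\ref{unipthm} but cites \cite[Theorem 7.35]{MNN17}, whose proof proceeds exactly along your lines --- every object of $\fun(BG,\Mod_R)$ is complete along the Euler classes $x_1,\dots,x_t$ (which restrict to $0$ at the basepoint of $BG$), and the cofree object $F(G_+;R)$ is identified with the Koszul object $\mathbbm{1}/(x_1,\dots,x_t)$, which yields unipotence via the projection formula. The step you flag as the main obstacle (that the comparison map $\mathbbm{1}/(x_1,\dots,x_t)\to F(G_+;R)$ is an equivalence, using the exterior, transgressive structure of $\H^*(G;\pi_0 R)$) is precisely the technical heart of the cited proof, so your sketch is faithful to it, though in your write-up that step is asserted rather than carried out.
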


\begin{remark} 
    We refer also to \cite[Theorem 8.13]{MNN17} for an example (essentially due
    to Hodgkin, Snaith, and McLeod) where $\fun(BG, \Mod_R)$ is
    unipotent although the polynomiality condition above is far from satisfied. 
\end{remark} 


We will need a more precise version of this result, when $R$ is in
addition allowed to have a nontrivial $G$-action.

\begin{corollary}
\label{cor:nontrivunipotence}
Let $G$ be a compact, connected Lie group. 
Let $R \in \CAlg( \Sp^{BG})$ be an $\EE_\infty$-ring with $G$-action such that $\pi_* (R)$ is concentrated in even
degrees and   such that 
the cohomology
$\H^*(BG; \pi_0 (R))$ is a polynomial algebra over $\pi_0 (R)$ on even-dimensional generators. 
Then
$\Mod_R( \Sp^{BG})$ is unipotent. 
Furthermore, there are classes $x_1, \dots, x_t \in \pi_*(R^{hG})$ such that 
\begin{enumerate}
    \item $(x_1,\cdots,x_t)$ is a regular sequence in $\pi_*(R^{hG})$;
\item  $R^{hG}/(x_1, \dots, x_t) \simeq R$ and 
$ \pi_*( R^{hG})/(x_1, \dots, x_t) \simeq \pi_*(R)$;
\item
$\Mod_{R}( \Sp^{BG})$ is identified with the symmetric monoidal
$\infty$-category of $R$-complete objects in $\Mod_{R^{hG}}$. 
\end{enumerate}

\end{corollary}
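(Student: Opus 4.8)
The plan is to run, in the twisted category $\mathcal{C} := \Mod_R(\Sp^{BG})$, the argument underlying Theorem~\ref{unipthm} (that is, \cite[Theorem~7.35]{MNN17}), reading off the three refinements along the way. First I would record the identifications: the internal mapping object $\underline{\mathrm{Ma}}\mathrm{p}_{\mathcal{C}}(\mathbbm{1},\mathbbm{1})$ is $R$ with its $G$-action, so $\End_{\mathcal{C}}(\mathbbm{1}) \simeq R^{hG}$, and the adjunction \eqref{adj} becomes $\Mod_{R^{hG}} \rightleftarrows \mathcal{C}$ with right adjoint $M \mapsto M^{hG}$ and symmetric monoidal, exact left adjoint $L = (-)\otimes_{R^{hG}}\mathbbm{1}$. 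The observation that makes the untwisted argument apply is that, $G$ being connected, its action on each (discrete) homotopy group $\pi_*(R)$ is trivial; thus the $G$-action on $R$ enters the homological bookkeeping below only through the untwisted $\pi_0(R)$-modules $\pi_*(R)$.

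Next I would analyze the homotopy fixed point spectral sequence $E_2^{s,t}=\H^s(BG;\pi_t(R))\Rightarrow\pi_{t-s}(R^{hG})$. A universal coefficients argument -- the polynomiality hypothesis forces the $\pi_0(R)$-linear cochain complex of $BG$ (which is of finite type) to have free cohomology, hence to split, as a complex of $\pi_0(R)$-modules, into its cohomology -- upgrades the hypothesis to $\H^*(BG;\pi_*(R))\cong\pi_*(R)[y_1,\dots,y_t]$ with the $y_i$ of even total degree. Since $E_2$ is then concentrated in even bidegrees, the spectral sequence degenerates at $E_2$ and converges strongly, so $\pi_*(R^{hG})$ carries a complete, separated filtration with $\mathrm{gr}^\bullet\pi_*(R^{hG})\cong\pi_*(R)[y_1,\dots,y_t]$. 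Lifting the polynomial generators to classes $x_1,\dots,x_t\in\pi_*(R^{hG})$ of positive filtration, the standard lemma on lifting a regular sequence through a complete filtration shows that $(x_1,\dots,x_t)$ is a regular sequence, that $\pi_*(R^{hG})/(x_1,\dots,x_t)\cong\pi_*(R)$, and that the iterated cofiber (Koszul complex) $R^{hG}/(x_1,\dots,x_t)$ has homotopy $\pi_*(R)$, concentrated in filtration zero. Finally, the edge map $R^{hG}=\mathbbm{1}^{hG}\to\mathbbm{1}^{h\{e\}}=R$ (restriction of fixed points to the basepoint of $BG$) annihilates the positive-filtration classes $x_i$, hence factors through an $R^{hG}$-algebra map $R^{hG}/(x_1,\dots,x_t)\to R$, which is the identity on $\pi_*(R)$ and so an equivalence. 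This yields conclusions (1) and (2).

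For unipotence and (3) I would bring in the cofree object $\overline{R}:=F(G_+;R)$, the right adjoint of the (conservative) forgetful functor $\mathcal{C}\to\Mod_{R_0}$, $R_0$ the underlying $\EE_\infty$-ring, applied to $R_0$. Here: (a) $\overline{R}^{hG}\simeq R_0$ with its $R^{hG}$-module structure via the augmentation, i.e. $\overline{R}^{hG}\simeq R^{hG}/(x_1,\dots,x_t)=L^{-1}$-side of (2); (b) $\overline{R}$ generates $\mathcal{C}$, since its underlying spectrum $F(\Sigma^\infty_+G,R_0)$ retracts onto $R_0$ via the identity element of $G$; (c) the $x_i$ act by zero on the $R^{hG}$-module $\overline{R}^{hG}$, so the unit $\mathbbm{1}\to\overline{R}$ extends to a map $\phi\colon\mathbbm{1}/(x_1,\dots,x_t)\to\overline{R}$ whose underlying map of spectra is an equivalence -- this last step is precisely the cell computation in the proof of \cite[Theorem~7.35]{MNN17}, transplanted to $\Mod_R(\Sp^{BG})$, and it uses only the structure of $\H^*(BG;\pi_*R)$, hence is insensitive to the $G$-action on $R$. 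Consequently the generator $\overline{R}$ is the finite iterated cofiber $\mathbbm{1}/(x_1,\dots,x_t)$ of the unit; in particular it is a descendable commutative algebra in $\mathcal{C}$ with $\Mod_{\overline{R}}(\mathcal{C})\simeq\Mod_{R_0}$, and descent along $\mathbbm{1}\to\overline{R}$ reduces unipotence of $\mathcal{C}$ to the evident module-category case. Conclusion (3) then follows by identifying the essential image of $M\mapsto M^{hG}$ with the $(x_1,\dots,x_t)$-complete -- equivalently, $R$-complete -- $R^{hG}$-modules, using that each stage of the fixed-point tower is a finite extension of $R$-modules.

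The main obstacle is the self-contained verification of step (c): that the comparison map $\phi$ is an equivalence. This is the one place where the precise algebra structure of $\H^*(BG;\pi_*R)$ (e.g. the transgression identifying $\H^*(G;\pi_*R)$ with an exterior algebra), rather than just its size, is used; everything else is the filtered-regular-sequence bookkeeping of the second paragraph together with standard facts about completion. Once one grants that the corresponding step of \cite[Theorem~7.35]{MNN17} applies verbatim -- which it does, precisely because $G$ is connected and so acts trivially on $\pi_*R$ -- the corollary follows.
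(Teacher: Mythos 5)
Your overall architecture is a direct transplant of the proof of \cite[Theorem 7.35]{MNN17} into the twisted category $\Mod_R(\Sp^{BG})$: degenerate homotopy fixed point spectral sequence, lifted regular sequence $x_1,\dots,x_t$, identification of the cofree algebra $F(G_+;R)$ with the Koszul object $\mathbbm{1}/(x_1,\dots,x_t)$, and then an identification of the category with complete $R^{hG}$-modules. The paper takes a different, shorter route that avoids re-running the MNN17 argument with twisted coefficients at all: it rewrites $\Mod_R(\Sp^{BG}) \we \Mod_R\bigl(\fun(BG,\Mod_{R^{hG}})\bigr)$, applies Theorem~\ref{unipthm} to the \emph{constant}-coefficient category $\fun(BG,\Mod_{R^{hG}})$, and then invokes Lemma~\ref{lem:locuni} (module categories over commutative algebras in a unipotent category are unipotent); statement (3) is obtained by unwinding which maps the localization inverts, and (1)--(2) from the degenerate homotopy fixed point spectral sequence --- essentially your second paragraph, which is fine and in fact more detailed than the paper's one line. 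So the main thing your route costs you is exactly the step you flag as ``the main obstacle'': you must assert that the cell/transgression computation of \cite[Theorem 7.35]{MNN17} goes through verbatim for a nontrivial (but connected, hence $\pi_*$-trivial) action, whereas the paper arranges matters so that the published statement can be cited as is.

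There is, however, one step as written that is genuinely wrong: the claim that because $\overline{R}=F(G_+;R)$ is a finite iterated cofiber $\mathbbm{1}/(x_1,\dots,x_t)$ of the unit, it is \emph{descendable}, and that ``descent along $\mathbbm{1}\to\overline{R}$'' then gives unipotence. Lying in the thick subcategory generated by the unit does not imply that the unit lies in the thick tensor ideal generated by the algebra (compare $\ZZ_p \to \ZZ/p$ in $p$-complete $\ZZ_p$-modules). Concretely, every object of the thick tensor ideal generated by $\mathbbm{1}/(x_1,\dots,x_t)$ is annihilated by a power of the ideal $(x_1,\dots,x_t)$ acting through $\End_{\mathcal C}(\mathbbm{1})\we R^{hG}$, while the $x_i$ are non-nilpotent in $\pi_*(R^{hG})$ (in the motivating example $\pi_*\TC^-(k)=W(k)[x,\sigma]/(x\sigma-p)$), so $\mathbbm{1}$ is not in that ideal and $\mathbbm{1}\to\overline{R}$ is not descendable; equivalently, the nonvanishing of Tate constructions such as $\TP(k)$ shows the cobar tower for $\overline{R}$ cannot converge for all objects. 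The conclusion you want is still reachable from your step (c), but by the completeness/localization mechanism rather than descendable descent: $\overline{R}\otimes(-)$ is conservative (its underlying spectrum retracts onto the underlying spectrum), multiplication by each $x_i$ is null on underlying spectra so every object of $\Mod_R(\Sp^{BG})$ is automatically $(x_1,\dots,x_t)$-complete, and the counit of the adjunction \eqref{adj} is checked to be an equivalence after tensoring with $\overline{R}\we\mathbbm{1}/(x_1,\dots,x_t)$ using perfectness of the Koszul object; this is what \cite{MNN17} actually does, and it also yields your identification in (3). With that repair (or, more efficiently, with the paper's reduction trick replacing your third paragraph), your argument is correct.
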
 

\begin{proof} 
We consider the basic adjunction
\begin{equation} \label{thisadjn} \Mod_{R^{hG}} \rightleftarrows \Mod_R(\Sp^{BG}),  \end{equation}
where the left adjoint sends $M \mapsto M \otimes_{R^{hG}} R$ for $M \in
\Mod_{R^{hG}}$ and the right
adjoint sends $V \mapsto V^{hG}$ for $V \in \Mod_R(\Sp^{BG})$. 
    Note that by Lemma~\ref{lem:locuni} if $\mathcal{C} $
    is unipotent and $A \in \CAlg(\mathcal{C})$, then $\Mod_A(\mathcal{C})$ is also
    unipotent.  Therefore, unipotence of $\Mod_R(\Sp^{BG})$ follows from 
    Theorem~\ref{unipthm} with $R$ replaced by $R^{hG}$ by using the
    equivalence $\Mod_R(\fun(BG,\Mod_{R^{hG}}))\we\Mod_R(\Sp^{BG})$ and
	thus \eqref{thisadjn} is a localization. 
Unwinding the definitions, it follows that the localization inverts those maps
of $R^{hG}$-modules $M \to M'$ such that $M \otimes_{R^{hG}} R \to M'
\otimes_{R^{hG}} R$ is an equivalence, which verifies statement 3. Statements 1
and 2 follow easily from the degenerate homotopy fixed point spectral sequence
for $\pi_*(R^{hG}$) and the assumptions. 
    \end{proof} 

Let $\mathcal{C}$ be a unipotent presentably symmetric monoidal stable $\infty$-category. 
Suppose $X \in \mathcal{C}$ is a \emph{dualizable} object. In this case, one
wants a criterion in order for $X$ to be perfect.  The next result follows from the statement that $X \simeq
\Hom_{\mathcal{C}}(\mathbbm{1}, X)
\otimes_{\mathrm{End}_{\mathcal{C}}(\mathbbm{1}) } \mathbbm{1}$ for any $X \in
\mathcal{C}$ when $\mathcal{C}$ is unipotent. 

\begin{proposition}\label{prop:unipotencecriterion}
    If $\mathcal{C}$ is a unipotent presentably symmetric monoidal stable
    $\infty$-category, then an object 
    $X \in \mathcal{C}$ is perfect if and only if $\Hom_{\mathcal{C}}(\mathbbm{1},
    X)$ is perfect (i.e., dualizable) as an $\mathrm{End}_{\mathcal{C}}(\mathbbm{1})$-module. 
\end{proposition}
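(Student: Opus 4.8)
The plan is to use unipotence to translate the perfectness question for $X$ from $\mathcal{C}$ to the module category $\Mod_{\End_{\mathcal{C}}(\mathbbm{1})}$, where perfectness, dualizability, and compactness all coincide. Write $R = \End_{\mathcal{C}}(\mathbbm{1})$ and $L = \cdot \otimes_R \mathbbm{1} \colon \Mod_R \to \mathcal{C}$ for the symmetric monoidal left adjoint, with right adjoint $G = \Hom_{\mathcal{C}}(\mathbbm{1},\cdot)$. Unipotence says exactly that the counit $L G \to \id_{\mathcal{C}}$ is an equivalence, so every $X \in \mathcal{C}$ is of the form $L(M)$ for $M = G(X)$.

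For the forward direction, suppose $X$ is perfect in $\mathcal{C}$, i.e. $X$ lies in the thick subcategory generated by $\mathbbm{1}$. Since $\mathbbm{1} = L(R)$ and $R$ is perfect (indeed dualizable) in $\Mod_R$, and since $L$ is exact and symmetric monoidal, $L$ carries the thick subcategory generated by $R$ into the thick subcategory generated by $\mathbbm{1}$; more to the point, I would argue directly that the full subcategory of $M \in \Mod_R$ with $L(M)$ perfect in $\mathcal{C}$ is thick and contains $R$, hence contains all perfect $R$-modules. That gives one implication without even invoking unipotence. For the converse, suppose $G(X) = M$ is perfect (equivalently dualizable) as an $R$-module. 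By unipotence $X \simeq L(M)$, and now the same thick-subcategory argument applied to $L$ shows $L(M)$ is perfect in $\mathcal{C}$: the class of $R$-modules $M$ with $L(M)$ perfect in $\mathcal{C}$ is a thick subcategory containing $R$, hence contains every perfect (= dualizable = compact) $R$-module $M$. This closes the equivalence.

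The only genuinely delicate point is making sure the two notions ``perfect as an $R$-module'' and ``dualizable as an $R$-module'' really agree, so that the hypothesis phrased in terms of dualizability feeds the thick-subcategory argument, which is naturally phrased in terms of thick subcategories generated by the unit $R$. But over an $\EE_\infty$-ring spectrum this is standard (dualizable $\iff$ compact $\iff$ perfect in $\Mod_R$), and the excerpt already records it in the remark following Definition~\ref{perfect}. Beyond that, everything is formal: exactness and monoidality of $L$ give that $L$ preserves the property of being in the thick subcategory generated by the respective units, and unipotence supplies the identification $X \simeq L G(X)$ needed to run the argument in the reverse direction. I would present the proof in essentially two short paragraphs: first the forward implication via $L$ preserving thick subcategories, then the converse via unipotence plus the same observation applied to $M = G(X)$.
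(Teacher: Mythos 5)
There is a gap: only one direction of the ``if and only if'' is actually proved. Write $R = \End_{\mathcal{C}}(\mathbbm{1})$, $L = \cdot \otimes_R \mathbbm{1}$, $G = \Hom_{\mathcal{C}}(\mathbbm{1},\cdot)$. The paragraph you label ``forward direction'' establishes the statement ``$M$ perfect in $\Mod_R$ $\Rightarrow$ $L(M)$ perfect in $\mathcal{C}$''; combined with the unipotence identification $X \simeq L(G(X))$ this is exactly the implication ``$G(X)$ perfect $\Rightarrow$ $X$ perfect,'' i.e.\ the same implication you then prove again in your ``converse'' paragraph. (And contrary to what you say, that implication does invoke unipotence, since without $X \simeq L(G(X))$ the statement about $L$ says nothing about $X$.) The genuine forward direction --- $X$ perfect in $\mathcal{C}$ implies $\Hom_{\mathcal{C}}(\mathbbm{1},X)$ perfect as an $R$-module --- is never addressed: knowing that $L$ preserves perfectness does not let you descend perfectness of $X \simeq L(G(X))$ to perfectness of $G(X)$, since $L$ is only a localization.

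The missing step is easy but is a different observation: the functor $G = \Hom_{\mathcal{C}}(\mathbbm{1},\cdot)$ is exact (it preserves cofiber sequences and retracts, $\mathcal{C}$ being stable) and satisfies $G(\mathbbm{1}) = R$, so it carries the thick subcategory of $\mathcal{C}$ generated by $\mathbbm{1}$ into the thick subcategory of $\Mod_R$ generated by $R$, i.e.\ into the perfect $R$-modules; this direction indeed needs no unipotence. With that sentence added, your argument is complete and is essentially the paper's: the paper deduces the proposition precisely from the counit equivalence $X \simeq \Hom_{\mathcal{C}}(\mathbbm{1},X) \otimes_R \mathbbm{1}$ together with the standard facts that perfect $=$ dualizable $=$ compact in $\Mod_R$ and that exact functors sending unit to unit preserve the thick subcategories they generate.
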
 

In general, dualizable objects need not be perfect without some type of regularity.
We illustrate this with two examples. 

\begin{example} 
Let $X \in \Mod_{H\mathbb{Z}_p}$ be $p$-complete. Then the following are equivalent: 
\begin{enumerate}
\item  
$X$ is perfect
in $\Mod_{H\mathbb{Z}_p}$.
\item
$X \otimes_{H \mathbb{Z}_p} H \mathbb{F}_p \simeq X/p \in \Mod_{ H
\mathbb{F}_p}$
is perfect. 
\item 
$X$ is dualizable in the $\infty$-category of $p$-complete $H
\mathbb{Z}_p$-modules. 
\item The direct sum $\bigoplus_{i \in \mathbb{Z}} \pi_i(X)$ is a finitely
generated $\mathbb{Z}_p$-module.
\end{enumerate}
Clearly 1 implies 3 which implies 2, and 4 implies 1.
Thus, it suffices to see that 2 implies 4, which follows because $\pi_* (X)$
is derived $p$-complete and $\pi_*(X)/p \pi_*(X) \subseteq \pi_* (X/p)$ is
finitely generated (cf. Lemma~\ref{lem:finitelygen} below for a more general
argument). 
\end{example} 

\begin{example} 
    Let $\mathcal{C} = L_{K(n)}
    \Sp$ be the $\infty$-category of $K(n)$-local spectra (with respect to some
    prime $p$), which is unipotent as a
    symmetric monoidal localization of $\Sp$ (see Lemma~\ref{lem:locuni}). 
    There are numerous invertible (hence, dualizable) objects in $\mathcal{C}$
    \cite{HMSPic} which are not perfect. 
	 
	 To see that these objects are not
    perfect, 
	we argue as follows. 
	Given $M \in \Pic( L_{K(n)} \Sp)$, we form the object $L_{K(n)} (E_n \wedge
	M)$ and take the homotopy groups, equipped with the action of the Morava
	stabilizer group, i.e., we can consider the Morava module. 
We specialize to the case $n = 1$: thus,  we take the $p$-adic $K$-theory of $M$ together with
the action of the Adams operations $\psi^l, l \in \mathbb{Z}_p^{\times}$. 
If $M$ is perfect, then a thick subcategory argument shows that the eigenvalues
of $\psi^l$ are integer powers of $l$. 
By contrast, this need not be the case for objects in $\Pic( L_{K(1)} \Sp)$, 
as follows from the construction in~\cite[Proposition~2.1]{HMSPic} when $p$ is
odd; in fact, the eigenvalues can be given by arbitrary $p$-adic
powers of $l$. \end{example}

Next, we need to review some facts about completion in the derived context,
and with an extra grading. 
Let $A_0$ be a commutative ring and let $I=(x_1,\ldots,x_t) \subseteq A_0$ be a finitely generated
ideal. In this case, one has the notion of an \emph{$I$-complete} object of the derived
category $D(A_0)$ (see \cite{DwGr}): an object $M$ in $D(A_0)$ is $I$-complete
if it is local for the object $A_0/x_1\otimes_{A_0}\cdots\otimes_{A_0} A_0/x_t$ (the iterated
cofiber, where all tensor products are derived). When $I=(x)$ is a principal
ideal, we will often write $x$-complete instead of $(x)$-complete.
Given a (discrete) $A_0$-module $M_0$, we will say that $M_0$ is \emph{derived
$I$-complete} if $M_0$, considered as an object of $D(A_0)$, is $I$-complete. 
Note the following fact about $I$-complete objects in the derived category. 

\begin{proposition}[{Compare \cite[Prop 5.3]{DwGr}}]
    Let $A_0$ be a commutative ring and let $I=(x_1,\ldots,x_t)$ be a finitely
    generated ideal. If $M \in D(A_0)$, then $M$ is $I$-complete if and only if 
    each homology group of $M$ is derived $I$-complete. \end{proposition}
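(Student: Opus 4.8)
The plan is to reduce to the principal-ideal case and then invoke the structure of derived completions. First I would note that if $I=(x_1,\ldots,x_t)$, then $I$-completeness of an object $M\in D(A_0)$ is equivalent to the conjunction of $x_i$-completeness for $i=1,\ldots,t$: this is because the iterated cofiber $A_0/x_1\otimes_{A_0}\cdots\otimes_{A_0}A_0/x_t$ generates, as a thick (indeed localizing, being built from finitely many $A_0/x_i$) subcategory, the same Bousfield class as the collection $\{A_0/x_i\}$, so the $I$-complete objects form the intersection $\bigcap_i(\text{$x_i$-complete objects})$. The same reasoning applies to derived $I$-completeness of a discrete module. Hence it suffices to prove the statement when $I=(x)$ is principal, i.e., that $M\in D(A_0)$ is $x$-complete if and only if each $\H_n(M)$ is derived $x$-complete.

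For the principal case, the key input is that the full subcategory of $x$-complete objects of $D(A_0)$ is closed under limits and extensions, and that derived $x$-completeness of a discrete module $M_0$ is detected by the vanishing of $\varprojlim^{\,0}$ and $\varprojlim^{\,1}$ of the inverse system $(M_0\xrightarrow{x}M_0\xrightarrow{x}\cdots)$ together with the classical Ext-characterization (as in \cite[Prop.\ 5.3]{DwGr}): $M_0$ is derived $x$-complete iff $\Hom_{A_0}(A_0[x^{-1}],M_0)=0=\Ext^1_{A_0}(A_0[x^{-1}],M_0)$. The ``if'' direction then goes by building $M$ from its Postnikov tower: if every $\H_n(M)$ is derived $x$-complete, then each truncation $\tau_{\leq n}M$ is $x$-complete by induction on the length (using the cofiber sequences $\Sigma^n\H_n(M)\to\tau_{\leq n}M\to\tau_{\leq n-1}M$ and closure under extensions), and then $M\simeq\varprojlim_n\tau_{\leq n}M$ is $x$-complete by closure under limits — here one should be slightly careful that $M$ is genuinely the limit of its Postnikov tower, which holds in $D(A_0)$, or alternatively run the argument with connective covers and handle the unbounded case by the standard $\lim^1$ argument.

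For the ``only if'' direction, suppose $M$ is $x$-complete; I want each $\H_n(M)$ derived $x$-complete. The clean way is to observe that $x$-completion is computed by $M\mapsto\varprojlim_k(M/x^k)$ (equivalently $M\mapsto\varprojlim_k(M\otimes_{A_0}^{\mathbb L}A_0/x^k)$), and to analyze the associated $\varprojlim$-exact sequence on homotopy groups: $M$ being $x$-complete means the natural map $M\to\varprojlim_k(M\otimes^{\mathbb L}_{A_0}A_0/x^k)$ is an equivalence, and the source of the Milnor sequence
\[
0\to{\varprojlim_k}^{1}\,\H_{n+1}(M\otimes^{\mathbb L}_{A_0}A_0/x^k)\to\H_n(M)\to\varprojlim_k\,\H_n(M\otimes^{\mathbb L}_{A_0}A_0/x^k)\to 0
\]
exhibits $\H_n(M)$ as an extension of a submodule of a $\varprojlim$ of $x$-power-torsion modules by a $\varprojlim^1$ term of such; both sides are derived $x$-complete (a $\varprojlim$ or $\varprojlim^1$ of derived $x$-complete modules is derived $x$-complete, and each $A_0/x^k$-module is visibly derived $x$-complete since it is killed by a power of $x$), and derived $x$-complete modules are closed under extensions, so $\H_n(M)$ is derived $x$-complete.

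I expect the main obstacle to be the bookkeeping in the ``only if'' direction — specifically, making sure the derived tensor terms $\H_*(M\otimes^{\mathbb L}_{A_0}A_0/x^k)$ are handled correctly (they involve both $M/x^k$-type pieces and $x^k$-torsion pieces from $\mathrm{Tor}_1$), and verifying that the class of derived $x$-complete discrete modules is closed under the relevant $\varprojlim$, $\varprojlim^1$, submodule, quotient, and extension operations. All of these are standard facts about derived completion of abelian groups, so once they are assembled the proof is short; the reduction from $I$ to principal $x$ via the intersection-of-Bousfield-classes remark is the only genuinely new wrinkle beyond \cite{DwGr}.
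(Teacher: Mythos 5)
Your reduction to a principal ideal and your ``only if'' direction are essentially sound, with two caveats worth flagging: the acyclics of the iterated cofiber strictly contain those of the individual $A_0/x_i$, so the ``same Bousfield class'' remark is really the standard (but not formal) fact that $I$-completeness is equivalent to $x_i$-completeness for each $i$; and derived $x$-complete modules are \emph{not} closed under submodules (e.g.\ $\mathbb{Z}\subset\mathbb{Z}_p$), so the phrase ``a submodule of a $\lim$'' would be dangerous if you needed it --- fortunately the right-hand map in the Milnor sequence is surjective, so you only need closure under $\lim$, $\lim{}^1$, and extensions, which does hold. The genuine gap is in the ``if'' direction. Your induction ``on the length'' of $\tau_{\leq n}M$ has a base case only when $M$ is homologically bounded below: for a general object of $D(A_0)$ each truncation $\tau_{\leq n}M$ still has infinitely many nonzero homology groups, so it is not a finite extension of shifted homology modules and the induction never starts. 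The proposed repair does not close this either: $M$ is the filtered \emph{colimit}, not the limit, of its connective covers $\tau_{\geq a}M$, and complete objects are not closed under filtered colimits (each $\mathbb{Z}/p^n$ is derived $p$-complete, while $\mathbb{Z}/p^\infty=\colim_n\mathbb{Z}/p^n$ is not), so the case of complexes unbounded below is left unproved --- and that case is needed, since the intended applications are to homotopy of $2$-periodic objects.

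The standard way around this, which is also how the paper argues for its spectrum-level analogue (Proposition~\ref{derivedorcomplete}), handles both implications at once with no boundedness hypothesis: for a single element $x$, $M$ is $x$-complete if and only if $T(M,x)=\lim(\cdots\xrightarrow{x}M\xrightarrow{x}M)$ vanishes, and the Milnor sequence exhibits $H_n(T(M,x))$ as an extension of $\lim(\cdots\xrightarrow{x}H_n(M)\xrightarrow{x}H_n(M))$ by $\lim{}^1(\cdots\xrightarrow{x}H_{n+1}(M)\xrightarrow{x}H_{n+1}(M))$; the vanishing of all these limits and $\lim{}^1$'s for every $n$ is precisely the statement that every $H_n(M)$ is derived $x$-complete. (Equivalently: $A_0[1/x]$ has projective dimension $\leq 1$, so $\mathrm{RHom}_{A_0}(A_0[1/x],M)=0$ if and only if $\Hom_{A_0}(A_0[1/x],H_n(M))=\Ext^1_{A_0}(A_0[1/x],H_n(M))=0$ for all $n$.) This single argument replaces both your Postnikov induction and your completion-tower Milnor sequence, and your reduction to principal generators then disposes of the multi-generator case.
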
 

We will need an analog in the graded context. Let $A_*$ be a commutative,  
graded ring.
Let $I  = (x_1, \dots, x_t) \subseteq A_*$ be a finitely generated homogeneous
ideal generated by elements $x_1, \dots, x_t \in A_*$ of degrees $d_1,\dots,d_t$. 
Consider the derived category $D(A_*)^{\mathrm{gr}}$ of \emph{graded} $A_*$-modules. 
Let $(d)$ denote the operation of shifting the grading by $d$, so that  if $M \in D(A_*)^{\mathrm{gr}}$, then one has maps $x_i \colon  M_{(d_i)} \to M$.
\begin{definition} 
Given $M \in D(A_*)^{\mathrm{gr}}$, we say that $M$ is
\emph{$I$-complete} if the natural map $$M \to \lim_n \mathrm{cofib}( x_i^n
 \colon  M_{(nd_i)} \to M)$$ is an equivalence for $i = 1, 2, \dots , t$. Equivalently, for $i
= 1, 2, \dots, t$, one requires that the limit of multiplication by $x_i$ on
$M$ vanishes. 
We say that a (discrete) graded $A_*$-module  $M_*$ is 
\emph{derived $I$-complete} if it is $I$-complete when considered as an object of
$D(A_*)^{\mathrm{gr}}$. 
\end{definition} 

As in \cite[Prop. 5.3]{DwGr} (see also 
the treatment in \cite[Section~4]{DAGXII}), one shows that  
an object of $D(A_*)^{\mathrm{gr}}$ is $I$-complete if and only if the homology
groups (which are graded $A_*$-modules) are derived $I$-complete. 
This implies that the collection of derived $I$-complete graded $A_*$-modules is abelian and
contains every classically $I$-complete graded $A_*$-module. 

Let $A$ be an $\EE_\infty$-ring. Suppose that $\pi_*(A)$ is concentrated in
even degrees. Let $I  = (x_1, \dots, x_t) \subseteq \pi_*(A)$ be a finitely
generated ideal generated by elements $x_i \in \pi_{d_i}(A)$. Let $M$ be an $A$-module. 
We say that $M$ is \emph{$I$-complete} if $M$ is Bousfield local with respect
to the iterated cofiber $A/(x_1, \dots, x_t)$. 
Equivalently, $M$ is $I$-complete if and only if for $i = 1, \dots, t$, the
(homotopy) inverse limit of multiplication by $x_i$ on $M$ is null, i.e., 
$\lim ( \dots \stackrel{x_i}{\to} \Sigma^{d_i}M \stackrel{x_i}{\to} M) = 0$. 

\begin{proposition} 
\label{derivedorcomplete}
    If $A$ is an $\EE_\infty$-ring and $M$ is an $A$-module,
    then $M$ is $I$-complete if and only if $\pi_*(M)$ is
    derived $I$-complete as a graded $\pi_*(A)$-module.
\end{proposition}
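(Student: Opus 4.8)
The plan is to translate both sides into the same pair of vanishing conditions on $\lim$ and $\lim^1$ of multiplication-by-$x_i$ towers. By the equivalent characterizations recorded just above the proposition, the $A$-module $M$ is $I$-complete exactly when, for each $i$, the homotopy inverse limit $\lim(\cdots \xrightarrow{x_i}\Sigma^{d_i}M\xrightarrow{x_i}M)$ vanishes in $\Sp$; and the graded $\pi_*(A)$-module $\pi_*(M)$ is derived $I$-complete exactly when, for each $i$, the corresponding tower $\cdots \xrightarrow{x_i}\pi_*(M)_{(d_i)}\xrightarrow{x_i}\pi_*(M)$ has vanishing limit in $D(\pi_*(A))^{\mathrm{gr}}$. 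So it is enough to treat a single homogeneous $x = x_i$ of degree $d = d_i$ and show that the $A$-module $M$ is $x$-complete if and only if $\pi_*(M)$ is derived $x$-complete as a graded $\pi_*(A)$-module.

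On the module side I would set $T = \lim(\cdots \xrightarrow{x}\Sigma^{2d}M\xrightarrow{x}\Sigma^dM\xrightarrow{x}M)$, so that $x$-completeness of $M$ is the assertion $T \simeq 0$. The Milnor sequence for the homotopy groups of a sequential homotopy limit gives, using $\pi_k(\Sigma^{nd}M)=\pi_{k-nd}(M)$, a short exact sequence
$$0 \longrightarrow {\lim_n}^1 \pi_{k+1-nd}(M) \longrightarrow \pi_k(T) \longrightarrow \lim_n \pi_{k-nd}(M) \longrightarrow 0$$
for every $k$, with all transition maps given by multiplication by $x$. Hence $T\simeq 0$ if and only if, for every integer $k$, both $\lim_n \pi_{k-nd}(M)$ and ${\lim_n}^1 \pi_{k-nd}(M)$ vanish (the tower in coordinate $k$ being $\pi_k(M)\xleftarrow{x}\pi_{k-d}(M)\xleftarrow{x}\cdots$).

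On the graded side I would invoke the graded version of \cite[Prop.~5.3]{DwGr} recalled above (see also \cite[Section~4]{DAGXII}), together with the fact that the homotopy limit of a sequential tower of discrete objects of $D(\pi_*(A))^{\mathrm{gr}}$ has homology only in two adjacent degrees, equal to $\lim_n$ and ${\lim_n}^1$ of the underlying tower, and that these limits are formed degreewise. Thus $\pi_*(M)$ is derived $x$-complete precisely when, for every $k$, both $\lim_n(\pi_*(M)_{(nd)})_k = \lim_n \pi_{k-nd}(M)$ and the corresponding $\lim^1$ vanish. This is verbatim the condition obtained on the module side, so the two notions agree; ranging over $i = 1,\dots,t$ completes the proof.

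The only real work is bookkeeping: lining up the grading-shift conventions so that the degree-$k$ part of the graded multiplication tower of $\pi_*(M)$ is literally the tower of abelian groups occurring in the Milnor sequence for $T$, and citing correctly the graded analog of Dwyer--Greenlees (and the structure of $R\lim$ of a tower). Beyond that there is no obstacle: both completeness notions unwind to the statement that $\lim_n$ and ${\lim_n}^1$ of each multiplication-by-$x_i$ tower on $\pi_*(M)$ vanish. (One may note in passing that the standing hypothesis that $\pi_*(A)$ be concentrated in even degrees is not used in this argument.)
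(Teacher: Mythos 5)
Your proof is correct and follows essentially the same route as the paper: both sides are unwound, via the Milnor $\lim$--$\lim^1$ sequence for the homotopy inverse limit of the multiplication-by-$x_i$ tower, into the simultaneous vanishing of $\lim_n$ and $\lim_n^1$ of the towers $\pi_{*-nd_i}(M)$, which is exactly the paper's argument (stated there more tersely, with "the converse follows similarly"). Your extra bookkeeping on the graded side just makes explicit what the paper leaves implicit.
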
 
\begin{proof} 
In fact, for $i = 1, \dots, t$, the homotopy groups of $ \lim ( \dots
\stackrel{x_i}{\to} \Sigma^{d_i}M \stackrel{x_i}{\to} M)$ are computed by the Milnor short exact
sequence
\[ \lim{}^1  ( \dots
\stackrel{x_i}{\to} \pi_{* - d_i}M \stackrel{x_i}{\to} \pi_* M)
\hookrightarrow
\pi_* \left( \lim ( \dots
\stackrel{x_i}{\to} \Sigma^{d_i} M \stackrel{x_i}{\to} M) \right) 
\twoheadrightarrow
\lim  ( \dots
\stackrel{x_i}{\to} \pi_{* - d_i}M \stackrel{x_i}{\to} \pi_* M)
.
\]
If $M$ is $I$-complete, then  the middle term vanishes for each $i = 1, 2,
\dots, t$; thus the two outer terms vanish, which implies
precisely that $\pi_*(M)$ is derived $I$-complete. 
The converse follows similarly. 
\end{proof} 

\begin{lemma} 
\label{lem:finitelygen}
Let $A_*$ be  a commutative, graded ring which is derived $x$-complete for
some homogeneous element $x \in
A_d$. 
Let $M_*$ be a (discrete) graded $A_*$-module which is derived $x$-complete.
Suppose $M_*/(x) M_*$ is a finitely generated $A_*$-module. Then $M_*$ is a
finitely generated $A_*$-module. 
\end{lemma}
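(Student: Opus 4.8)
The plan is to imitate the classical "complete Nakayama" argument, lifting generators from the quotient $M_*/(x)M_*$ and checking that they generate, where the usual finiteness/induction step is replaced by an appeal to derived $x$-completeness of both $A_*$ and $M_*$. First I would pick homogeneous elements $m_1,\dots,m_r \in M_*$ whose images span $M_*/(x)M_*$ over $A_*$; this is possible by hypothesis. These assemble into a map of graded $A_*$-modules $\varphi\colon F_* := \bigoplus_{j=1}^r A_*(e_j) \to M_*$ (with $e_j$ the degree of $m_j$) which is surjective modulo $x$. Let $N_* = \operatorname{coker}(\varphi)$; then $N_*/(x)N_* = 0$, i.e. multiplication by $x$ is surjective on $N_*$.

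Next I would show $N_* = 0$. The key point is that $N_*$ is derived $x$-complete: it is a quotient of $M_*$ (a cokernel of a map of discrete graded modules is again discrete, and discrete quotients of derived $x$-complete modules are derived $x$-complete, since the derived $x$-complete modules form an abelian subcategory — as recorded in the excerpt just after the graded $I$-completeness definition, citing \cite[Prop.~5.3]{DwGr}). But a derived $x$-complete module on which $x$ acts surjectively must vanish: surjectivity of $x$ gives $N_* = xN_* = x^2 N_* = \cdots$, and more precisely the tower $(\cdots \xrightarrow{x} N_{*-d}\xrightarrow{x} N_*)$ has surjective transition maps, so its $\lim^1$ vanishes and its inverse limit surjects onto $N_*$; but derived $x$-completeness forces that inverse limit (equivalently, $\lim(\cdots\xrightarrow{x} N_{*-d}\xrightarrow{x}N_*)$) to be zero, whence $N_* = 0$. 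So $\varphi$ is surjective and $M_*$ is generated by $m_1,\dots,m_r$.

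I should be slightly careful about one point, which I expect to be the only real subtlety: to run the $\lim^1$/$\lim$ computation I must know that "$M$ derived $x$-complete" for a discrete graded module is equivalent to the two-term condition that $\lim(\cdots\xrightarrow{x} M_{(d)}\xrightarrow{x} M)$ and its $\lim^1$ both vanish — i.e. I am using the Milnor-sequence description of the derived limit in $D(A_*)^{\mathrm{gr}}$ exactly as in Proposition~\ref{derivedorcomplete}, now in the purely algebraic graded setting. Given that, the argument is formal. The role of $A_*$ being derived $x$-complete is only to guarantee that the free modules $F_*$ and hence the relevant constructions stay inside the abelian category of derived $x$-complete modules (and, if one wanted, to know the hypothesis is not vacuous); the heart of the matter is the abelianness statement for derived $x$-complete graded modules together with the elementary fact that $x$ surjective plus derived $x$-complete implies zero. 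The main obstacle, such as it is, is just making sure the $\lim$/$\lim^1$ bookkeeping with the internal grading shifts $(nd)$ is set up correctly; there is no deep difficulty.
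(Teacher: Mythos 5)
Your proof is correct, and at heart it is the same complete--Nakayama strategy as the paper's: lift homogeneous generators of $M_*/(x)M_*$ to a map $f\colon F_*\to M_*$ from a finitely generated graded free module and use derived $x$-completeness to upgrade surjectivity mod $x$ to surjectivity. The implementation differs in a genuine (if modest) way. The paper never leaves $D(A_*)^{\mathrm{gr}}$: the cofiber $Cf$ is automatically $x$-complete, being a finite colimit of the complete objects $F_*$ and $M_*$, and one kills $H_0(Cf)=\coker f$ by writing $Cf\simeq\lim_n \mathrm{cofib}(x^n\colon Cf_{(nd)}\to Cf)$ and running an induction on $n$ together with the Milnor sequence; no appeal to the abelian category of derived complete modules is needed. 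You instead pass to the discrete cokernel $N_*=\coker f$, use the abelian (weak Serre) property of derived $x$-complete graded modules to see that $N_*$ is derived $x$-complete, and finish with derived Nakayama: a derived $x$-complete discrete module with $xN_*=N_*$ vanishes, by exactly the $\lim$/${\lim}^{1}$ argument you describe (surjective transition maps kill ${\lim}^{1}$ and make the limit surject onto $N_*$, while completeness kills the limit); the Milnor-sequence translation you worry about is indeed just the graded analogue of Proposition~\ref{derivedorcomplete}. Both inputs are recorded in the paper, so there is no gap, but one sentence needs repair: ``discrete quotients of derived $x$-complete modules are derived $x$-complete'' is false as a general statement (for instance $\mathbb{Z}_p/\mathbb{Z}$ is a quotient of the derived $p$-complete $\mathbb{Z}$-module $\mathbb{Z}_p$ but is not derived $p$-complete). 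What is true, and what your argument actually uses, is that the cokernel of a map \emph{between} derived $x$-complete modules is derived $x$-complete; this is precisely where completeness of $F_*$, hence the hypothesis on $A_*$, enters, as your closing remark correctly indicates — so this is a matter of phrasing rather than a flaw in the argument.
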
 
\begin{proof} 
Let $F_*$ be a finitely generated graded free $A_*$-module together with a map $f \colon  F_*
\to M_*$ which induces a surjection $F_*/(x) F_* \to M_*/(x)M_*$. 
We claim that $f$ itself is a surjection. In fact, form the cofiber $Cf$ of $f$ in
the derived category $D(A_*)^{\mathrm{gr}}$; it suffices to see that $Cf$ has
homology concentrated in (homological, not graded) degree $1$. Note first that $\mathrm{cofib}(x \colon  Cf_{(d)} \to
Cf)$ is concentrated in degree $1$. But $Cf$ is $x$-complete, so $Cf \simeq
\lim_n \mathrm{cofib}(x^n \colon Cf_{(nd)}  \to Cf) $. The cofibers $
\mathrm{cofib}(x^n \colon Cf_{(nd)}  \to Cf)$ are concentrated in degree one by 
induction on $n$ and the transition maps are surjective; thus the homotopy
limit $Cf$ is concentrated in degree one by the Milnor exact sequence, as desired. 
\end{proof} 

\begin{proposition} 
\label{whenperfect}
Let $A$ be an $\EE_\infty$-ring such that $\pi_*(A)$ is a regular noetherian
ring of finite Krull dimension.
Then an $A$-module $M$ is perfect if and only if $\pi_*(M)$ is a finitely
generated $\pi_*(A)$-module.
\end{proposition}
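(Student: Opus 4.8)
The plan is to prove the two implications separately. The ``only if'' direction is a formal thick-subcategory argument: if $M$ is perfect it lies in the thick subcategory of $\Mod_A$ generated by $A$, so it is enough to observe that the full subcategory of those $N \in \Mod_A$ with $\pi_*(N)$ a finitely generated $\pi_*(A)$-module is thick. It contains $A$; it is closed under suspension; it is closed under cofibers, since a cofiber sequence induces a long exact sequence on homotopy and finitely generated modules over any ring are closed under extensions; and it is closed under retracts, since a retract of a module is in particular a quotient of it. (No hypothesis on $\pi_*(A)$ beyond being a ring is needed here; regularity enters only in the converse.)

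For the converse, assume $\pi_*(M)$ is a finitely generated graded $\pi_*(A)$-module. Since $\pi_*(A)$ is regular, noetherian, and of finite Krull dimension, it has finite global dimension, say bounded by $n$, and hence every finitely generated graded $\pi_*(A)$-module admits a resolution by finitely generated graded free modules whose $n$-th syzygy is graded projective. I would then build $M$ from $A$ in finitely many steps. Choosing homogeneous generators $m_1,\dots,m_r$ of $\pi_*(M)$ with $m_i \in \pi_{d_i}(M)$ gives a map $F_0 := \bigoplus_i \Sigma^{d_i} A \to M$ that is surjective on homotopy; setting $K_1 := \mathrm{fib}(F_0 \to M)$, the long exact sequence identifies $\pi_*(K_1)$ with the first syzygy of $\pi_*(M)$, which is finitely generated because $\pi_*(A)$ is noetherian. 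Iterating produces cofiber sequences $K_{j+1} \to F_j \to K_j$ in which each $F_j$ is a finite sum of shifts of $A$ and $\pi_*(K_{j+1})$ is the $(j+1)$-st syzygy of $\pi_*(M)$; in particular $\pi_*(K_n)$ is a finitely generated graded projective $\pi_*(A)$-module.

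It then suffices to show that any $A$-module $P$ with $\pi_*(P)$ finitely generated graded projective is perfect: granting this, $K_n$ is perfect, and descending through the cofiber sequences $K_{j+1} \to F_j \to K_j$ for $j = n-1, \dots, 0$, whose outer terms are perfect, shows in turn that $K_{n-1}, \dots, K_1, K_0 = M$ are perfect. To prove the claim for $P$, I would choose $F \to P$ with $F$ a finite sum of shifts of $A$ and $\pi_*(F) \to \pi_*(P)$ surjective; since $\pi_*(P)$ is projective this surjection splits, yielding a degree-zero idempotent endomorphism $e$ of the graded $\pi_*(A)$-module $\pi_*(F)$ with image isomorphic to $\pi_*(P)$. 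Under the canonical ring isomorphism identifying $\pi_0 \End_A(F)$ with the ring of degree-zero graded endomorphisms of $\pi_*(F)$, $e$ corresponds to an idempotent endomorphism of $F$ in the triangulated category $\Ho(\Mod_A)$; as this category has countable coproducts it is idempotent complete, so the idempotent splits, exhibiting a retract $P'$ of $F$ with $\pi_*(P') \cong \pi_*(P)$. The composite $P' \hookrightarrow F \to P$ is then an isomorphism on homotopy groups, hence an equivalence, so $P \simeq P'$ is a retract of a finite sum of shifts of $A$ and therefore perfect.

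The step I expect to be the main obstacle is exactly this projective base case: one is tempted to split the cofiber sequence $K_1 \to F_0 \to M$ directly when $\pi_*(M)$ is projective, but the splitting on homotopy groups need not be realized by a map of $A$-modules, so genuine input beyond homotopy groups is required, namely idempotent-completeness of $\Ho(\Mod_A)$ (equivalently, Lurie's splitting of homotopy-coherent idempotents). A secondary point meriting care is the comparison between graded and ungraded projective dimension of $\pi_*(A)$-modules used in the second paragraph, where one checks that a finitely generated graded module which is projective after forgetting the grading is already graded projective; this is standard for $\mathbb{Z}$-graded rings.
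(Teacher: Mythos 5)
Your argument is correct and follows essentially the same route as the paper: the forward direction by a thick-subcategory argument, and the converse by resolving $\pi_*(M)$ by finitely generated graded free modules (using finiteness of homological dimension, which comes from regularity), with the projective base case handled via graded projectivity and by exhibiting the module as a retract of a finite free $A$-module --- your idempotent-splitting argument in $\Ho(\Mod_A)$ simply makes explicit the retract that the paper asserts with a citation. One small correction: your parenthetical claim that the ``only if'' direction needs no hypothesis on $\pi_*(A)$ is not quite right, since closure of the class $\{N : \pi_*(N)\ \text{finitely generated}\}$ under cofibers requires the kernel terms in the long exact sequence --- submodules of finitely generated modules --- to be finitely generated, i.e.\ it uses noetherianness of $\pi_*(A)$, which is of course part of the hypotheses.
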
 
\begin{proof} 
We show that finite generation implies perfectness; the other direction follows from a thick subcategory argument. Note that the homological dimension of $\pi_*(M)$ as a $\pi_*(A)$-module is
 finite by regularity. We thus use induction on the homological
 dimension $\mathrm{h.dim}(\pi_*(M))$. If $\pi_*(M)$ is projective as a $\pi_*(A)$-module, 
 then it is also projective as a graded $\pi_*(A)$-module by a simple argument
 \cite[Cor. I.2.2]{gradedringtheory} and $M$ itself is a retract of a
 finitely generated free
 $A$-module. In general, choose a finitely generated free $A$-module $N$ with a
 map $N \to M$ inducing a surjection on $\pi_*$, and form a fiber sequence
 $N' \to N \to M$. We have that $\mathrm{hdim}(\pi_*(N')) =
 \mathrm{hdim}(\pi_*(M)) -1$. Since $M$ is perfect if and only if $N'$ is, this
 lets us conclude by the inductive hypothesis. 
\end{proof} 

\begin{proposition} 
Let $A$ be an $\EE_\infty$-ring such that $\pi_*(A)$ is a regular noetherian
ring of finite Krull dimension. Suppose that $x_1, \dots, x_t \in \pi_*(A)$ form
a regular sequence and $A$ is $(x_1, \dots, x_t)$-complete.\footnote{By
regularity, this is equivalent to the condition that $\pi_*(A)$ should be
classically $(x_1, \dots, x_t)$-complete.} Then: 
\begin{enumerate}
\item  

Let $M \in \Mod_A$ be an $(x_1, \dots, x_t)$-complete module such that
$M/(x_1, \dots, x_t)$ is perfect as an $A$-module. Then $M$ is perfect as an
$A$-module.

\item
Let $N$ be a dualizable object of the symmetric monoidal $\infty$-category of
$(x_1, \dots, x_t)$-complete $A$-modules. Then $N$ is perfect. 
\end{enumerate}
\label{descpf}
\end{proposition}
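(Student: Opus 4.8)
The plan is to establish (1) first and then obtain (2) from it by a short formal argument. For (1) I would induct on $t$, with the base case $t=1$ being the heart of the matter. Suppose $t=1$, with a regular element $x := x_1 \in \pi_d(A)$, $M$ an $x$-complete $A$-module, and $M/x$ perfect over $A$. By Proposition~\ref{whenperfect} it suffices to show $\pi_*(M)$ is finitely generated over $\pi_*(A)$. Since $A$ and $M$ are $x$-complete, Proposition~\ref{derivedorcomplete} shows that $\pi_*(A)$ and $\pi_*(M)$ are derived $x$-complete as graded $\pi_*(A)$-modules. The cofiber sequence $\Sigma^d M \xrightarrow{x} M \to M/x$ yields, on homotopy, an inclusion $\pi_*(M)/x\pi_*(M) \hookrightarrow \pi_*(M/x)$; as $M/x$ is perfect over $A$, the target is finitely generated over the noetherian ring $\pi_*(A)$, hence so is the submodule $\pi_*(M)/x\pi_*(M)$. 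Lemma~\ref{lem:finitelygen} then gives that $\pi_*(M)$ is finitely generated, and Proposition~\ref{whenperfect} concludes the case $t=1$.

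For the inductive step I would use that the full subcategory of $(x_1,\dots,x_t)$-complete $A$-modules is closed under shifts, finite colimits and retracts (being a Bousfield localization of $\Mod_A$ onto a full subcategory defined by a property), and that $(x_1,\dots,x_t)$-completeness of an $A$-module implies $(x_1,\dots,x_j)$-completeness for every $j\le t$, in particular $x_i$-completeness for each $i$. Given $M$ as in (1), the module $M/x_t = \mathrm{cofib}(x_t\colon \Sigma^{d_t}M \to M)$ is then $(x_1,\dots,x_{t-1})$-complete, and $(M/x_t)/(x_1,\dots,x_{t-1}) \simeq M/(x_1,\dots,x_t)$ is perfect over $A$; since $x_1,\dots,x_{t-1}$ is again a regular sequence, the inductive hypothesis applies and shows $M/x_t$ is perfect over $A$. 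Now $M$ is $x_t$-complete with $M/x_t$ perfect over $A$, so the case $t=1$ applies with the element $x_t$ and gives that $M$ is perfect over $A$.

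To deduce (2), write $\mathcal{D}$ for the symmetric monoidal $\infty$-category of $I$-complete $A$-modules, $I=(x_1,\dots,x_t)$, with unit $\mathbbm{1}=A$, and let $L_I$ denote the $I$-completion functor. Since $N$ lies in $\mathcal{D}$ it is $I$-complete, so by part (1) it is enough to see that $N\otimes_A A/I$ is perfect over $A$. The key input is that $\mathcal{D}$ is compactly generated, with the Koszul complex $A/I$ as a compact generator: $A/I$ is $I$-power-torsion (as $x_1,\dots,x_t$ is regular), hence $I$-complete; it is compact in $\mathcal{D}$ because $\underline{\mathrm{Hom}}_A(A/I,-)$ is given by $-\otimes_A (A/I)^{\vee}$ for the perfect $I$-power-torsion dual $(A/I)^{\vee}$, and this functor commutes with filtered colimits and with $I$-completion; and it generates because $\underline{\mathrm{Hom}}_{\mathcal{D}}(A/I,Z)=0$ forces $Z\otimes_A A/I=0$, whence $L_I Z=0$, so $Z = L_I Z = 0$. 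Consequently the compact objects of $\mathcal{D}$ are exactly $\mathrm{thick}_{\Mod_A}(A/I)\subseteq\Perf(A)$. Now $N$ is dualizable in $\mathcal{D}$, so the endofunctor $X\mapsto L_I(X\otimes_A N)$ of $\mathcal{D}$ has a colimit-preserving right adjoint $X\mapsto L_I(X\otimes_A N^{\vee})$ and therefore preserves compact objects; hence $L_I(A/I\otimes_A N)$ is a compact object of $\mathcal{D}$, and since $A/I\otimes_A N$ is already $I$-power-torsion it equals its own $I$-completion. Thus $N\otimes_A A/I$ lies in $\Perf(A)$, and part (1) applies. (In the situation of Theorem~\ref{perfectresult} one has $A/I\simeq\THH(k)$, which is an $\EE_\infty$-ring, so there one could alternatively argue by symmetric monoidal base change along $A\to A/I$.)

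The main obstacle is in part (2): identifying the compact objects of the completed category $\mathcal{D}$ — equivalently, verifying that $A/I$ is a compact generator — and combining this with the standard fact that tensoring with a dualizable object preserves compactness. Once these are in place, (2) is a short consequence of (1). Part (1) itself is essentially bookkeeping with finite generation under reduction by the regular sequence; the only point requiring care is to keep track that each $M/x_j$ remains complete, so that Lemma~\ref{lem:finitelygen} and Proposition~\ref{derivedorcomplete} stay applicable at every stage of the induction.
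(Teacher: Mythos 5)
Your proof is correct. Part (1) is essentially the paper's own argument with the induction spelled out: the base case via Proposition~\ref{whenperfect}, Proposition~\ref{derivedorcomplete}, and Lemma~\ref{lem:finitelygen}, and the reduction to $t=1$ by passing to $M/x_t$. One point of phrasing: in the final step $x_t$ need not be a regular element of $\pi_*(A)$ (it is only regular modulo $(x_1,\dots,x_{t-1})$), so you should invoke the $t=1$ \emph{argument}---which nowhere uses regularity of the single element, only $x$-completeness of $A$ and $M$---rather than the $t=1$ case of the statement verbatim; since $(x_1,\dots,x_t)$-completeness in the derived sense used here is equivalent to $x_i$-completeness for each $i$, as you note, nothing else changes.

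Part (2) takes a genuinely different route from the paper. The paper never discusses compact generation of the completed category: it uses the internal mapping object formula $\Hom_A(N,N_2)\simeq \mathbb{D}N\widehat{\otimes}_A N_2$ for $N_2\in\widehat{\Mod_A}$, reduces modulo $(x_1,\dots,x_t)$, observes that both sides are insensitive to replacing $N_2$ by its completion and hence that the reduced formula holds for all $N_2\in\Mod_A$, and then reads off compactness of $N/(x_1,\dots,x_t)$ in $\Mod_A$ because the left side is a shift of $\Hom_A(N/(x_1,\dots,x_t),N_2)$ while the right side commutes with filtered colimits. You instead prove that $\widehat{\Mod_A}$ is compactly generated by the Koszul complex $A/I$, identify its compact objects with the thick subcategory generated by $A/I$ inside $\Perf(A)$, and use that tensoring with a dualizable object preserves compacts. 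Both arguments ultimately rest on the same input---that tensoring with the perfect $I$-power-torsion object $A/I$ (or its dual, which is a shift of it) does not distinguish a module from its $I$-completion---but yours packages it more structurally and yields the reusable statement that the compacts of the complete category are exactly $\mathrm{thick}(A/I)$, at the cost of invoking the compact-generation machinery, whereas the paper's is a shorter hands-on computation carried out entirely inside $\Mod_A$. Both then conclude by applying part (1) to $N$.
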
 
\begin{proof} For (1), 
by Proposition~\ref{whenperfect},  it suffices to prove that $\pi_*(M)$ is finitely generated as an $\pi_*(A)$-module. 
By induction on $t$, one can reduce to the case where $t = 1$ where we let $x =
x_1$. 
In this case, $M/x$ is perfect as an $A$-module by assumption. 
It follows that $\pi_*(M)/(x) \pi_*(M) \subset \pi_*(M/x)$ is finitely generated as a
$\pi_*(A)$-module. 
Note that $\pi_*(M)$ is a derived $x$-complete $\pi_*(A)$-module (Proposition~\ref{derivedorcomplete}). 
By Lemma~\ref{lem:finitelygen}, it follows that $\pi_*(M)$ is a finitely generated
$\pi_*(A)$-module, completing the proof of (1).

For (2), we will argue that $N/(x_1, \dots, x_t)$ is perfect as an $A$-module
so that we can apply (1).
Let $\widehat{\Mod_A}$ denote the symmetric monoidal $\infty$-category
of complete $A$-modules. We have internal mapping objects in $\widehat{\Mod_A}$:
namely, given $N_1, N_2 \in \widehat{\Mod_A}$, the usual mapping
$A$-module $\Hom_A(N_1, N_2)$ is also
complete and yields the internal mapping object in $\widehat{\Mod_A}$. 
Since $N$ is dualizable in complete $A$-modules, it has a dual $\mathbb{D}N \in
\widehat{\Mod_A}$ and 
for any $N_2$, we have an equivalence
\[ \Hom_A(N, N_2) \simeq  \mathbb{D}N \widehat{\otimes}_A N_2,\]
where $\widehat{\otimes}_A$ denotes the completed tensor product. 
Tensoring with $A/(x_1, \dots, x_t)$, it follows that for any $N_2 \in
\widehat{\Mod_A}$, 
\begin{equation}  \label{A0} \Hom_A(N, N_2) /(x_1, \dots, x_t) \simeq \mathbb{D} N \otimes_A N_2/(x_1,
\dots, x_n).  \end{equation}
Note that both sides of \eqref{A0} make sense for $N_2 \in \Mod_A$, and are
unchanged by passage to the completion: therefore, \eqref{A0} holds for arbitrary
$N_2 \in \Mod_A$.
The left-hand-side is identified with a shift of $\Hom_A( N/(x_1, \dots, x_t),
N_2)$ and the right-hand-side commutes with filtered colimits; thus, unwinding the definitions, we find that $N/(x_1, \dots, x_t) \in
\Mod_A$ is a compact  object and therefore perfect. By (1), this shows that $N$
is perfect as an $A$-module, as desired. 
\end{proof} 

We can now state and prove our main theorem about perfectness.

\begin{theorem} 
\label{technicalperfect}
Let $G$ be a compact, connected Lie group and let $R \in \CAlg( \Sp^{BG})$. Suppose that
$\pi_*(R)$ is a regular noetherian ring of finite Krull dimension concentrated
in even degrees and that
$\H^*( BG; \pi_0 R)$ is a polynomial algebra over $\pi_0 R$ on even-dimensional classes.  Then any dualizable
object in  $\Mod_{R}( \Sp^{BG})$ is perfect. 
\end{theorem}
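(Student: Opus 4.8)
The plan is to reduce the statement to Proposition~\ref{descpf}(2) by means of the unipotence package of Corollary~\ref{cor:nontrivunipotence}; the only genuinely new input is the regularity and finiteness of $\pi_*(R^{hG})$. First I would apply Corollary~\ref{cor:nontrivunipotence} to the given $G$ and $R$, whose hypotheses hold verbatim. It yields: (i) $\Mod_R(\Sp^{BG})$ is unipotent with $\End_{\Mod_R(\Sp^{BG})}(\mathbbm 1)\simeq R^{hG}$; (ii) a regular sequence $x_1,\dots,x_t\in\pi_*(R^{hG})$ with $R^{hG}/(x_1,\dots,x_t)\simeq R$ and $\pi_*(R^{hG})/(x_1,\dots,x_t)\cong\pi_*(R)$; and (iii) a symmetric monoidal equivalence of $\Mod_R(\Sp^{BG})$ with the $\infty$-category $\widehat{\Mod_{R^{hG}}}$ of $(x_1,\dots,x_t)$-complete $R^{hG}$-modules. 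Write $A:=R^{hG}$, an $\EE_\infty$-ring with $\pi_*(A)$ concentrated in even degrees, and $I:=(x_1,\dots,x_t)$. Since $A$ is the unit object of $\Mod_R(\Sp^{BG})$, it agrees with its own $I$-completion, so $A$ is $I$-complete and $\pi_*(A)$ is derived $I$-complete (Proposition~\ref{derivedorcomplete}).

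The crux is to check that $\pi_*(A)=\pi_*(R^{hG})$ is a regular noetherian ring of finite Krull dimension, which is what makes Proposition~\ref{descpf} applicable to $A$ with the regular sequence $x_1,\dots,x_t$. Here I would use that the homotopy fixed point spectral sequence $\H^s(BG;\pi_t R)\Rightarrow\pi_{t-s}(R^{hG})$ degenerates: the connected group $G$ acts trivially on $\pi_*(R)$, so the coefficient system is constant, and the polynomiality and evenness hypotheses leave no room for differentials, exactly as in Theorem~\ref{unipthm}. Its $E_2=E_\infty$-page is $\pi_*(R)[y_1,\dots,y_t]$, so $\pi_*(A)$ carries a complete, exhaustive, bounded-below filtration whose associated graded is the polynomial ring $\pi_*(R)[y_1,\dots,y_t]$ over the regular noetherian ring $\pi_*(R)$ of finite Krull dimension. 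A standard argument in commutative algebra then shows that $\pi_*(A)$ is itself noetherian, regular, and of Krull dimension $\dim\pi_*(R)+t$; since $x_1,\dots,x_t$ is a regular sequence one moreover has $\mathrm{gr}_I\pi_*(A)\cong\pi_*(R)[T_1,\dots,T_t]$, so the conclusions can equivalently be phrased directly in terms of $I$ and the $I$-adic completeness of $\pi_*(A)$. This passage from the degenerate spectral sequence to regularity and finiteness of $\pi_*(R^{hG})$ is the main obstacle; everything else is formal. (Note that localizing at $I$-adic completeness alone is not enough: not every prime of $\pi_*(A)$ contains $I$, so one genuinely needs the ``polynomial/power-series over a regular ring is regular'' input provided by the associated-graded description.)

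It remains to assemble the pieces. Since $\pi_*(A)$ is regular noetherian of finite Krull dimension, $x_1,\dots,x_t$ is a regular sequence, and $A$ is $I$-complete, Proposition~\ref{descpf}(2) shows that every dualizable object of $\widehat{\Mod_A}$ is perfect as an $A$-module, hence perfect in the symmetric monoidal $\infty$-category $\widehat{\Mod_A}$ (a perfect $A$-module is automatically $I$-complete, being built from $A$ by finite colimits, shifts, and retracts). Now let $X\in\Mod_R(\Sp^{BG})$ be dualizable. Transporting $X$ across the symmetric monoidal equivalence of Corollary~\ref{cor:nontrivunipotence}(3), it becomes a dualizable object of $\widehat{\Mod_A}$, namely $\Hom_{\Mod_R(\Sp^{BG})}(\mathbbm 1,X)\simeq X^{hG}$; by the previous sentence this is perfect, and since perfectness depends only on the symmetric monoidal structure (Definition~\ref{perfect}), $X$ is perfect. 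Equivalently, one may quote the unipotence criterion Proposition~\ref{prop:unipotencecriterion} together with $\End_{\Mod_R(\Sp^{BG})}(\mathbbm 1)\simeq A$ to pass directly from perfectness of $X^{hG}$ over $A$ to perfectness of $X$. This completes the proof.
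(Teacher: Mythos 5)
Your proposal is correct, and its skeleton coincides with the paper's: invoke Corollary~\ref{cor:nontrivunipotence} to identify $\Mod_R(\Sp^{BG})$ with $(x_1,\dots,x_t)$-complete $R^{hG}$-modules, note that a dualizable $X$ gives a dualizable $X^{hG}$ there, apply Proposition~\ref{descpf}(2), and finish with Proposition~\ref{prop:unipotencecriterion}. The genuine divergence is in the one nonformal step, which you correctly isolate: proving that $\pi_*(R^{hG})$ is regular noetherian of finite Krull dimension. You obtain this by transferring the properties along the complete homotopy fixed point filtration from its associated graded $\pi_*(R)[y_1,\dots,y_t]$. That transfer is true, but it is not a routine ``standard argument'': noetherianness lifts classically, while regularity requires the theorem that for a complete (Zariskian) filtration one has $\mathrm{gl.dim}\,A\le\mathrm{gl.dim}\,\mathrm{gr}(A)$ (Bj\"ork, Li--van Oystaeyen), combined with Auslander--Buchsbaum--Serre to convert finite global dimension into regularity and finite Krull dimension (your dimension count should be the inequality $\le\dim\pi_*(R)+t$, which is all you need); since this is the crux, it should be cited precisely or proved. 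The paper instead stays inside graded commutative algebra: completeness of the $(x_1,\dots,x_t)$-adic filtration plus noetherian associated graded gives noetherianness (via \cite{GY}); every maximal graded ideal contains the $x_i$, and killing this regular sequence lands in a local ring of $\pi_*(R)$, so the graded-local localizations are regular, whence regularity globally by \cite{3local}; finite Krull dimension comes from a height bound through homogeneous primes. Your route buys brevity at the cost of importing filtered-ring machinery; the paper's is more elementary and already has the regular sequence from Corollary~\ref{cor:nontrivunipotence} in hand. Two minor points, neither affecting correctness: your identification of the $E_2$-page as $\pi_*(R)\otimes_{\pi_0 R}H^*(BG;\pi_0 R)$ uses a splitting/universal-coefficient argument (finite type of $BG$ and degreewise freeness of $H^*(BG;\pi_0 R)$ over $\pi_0 R$), and the $I$-completeness of $A=R^{hG}$ is better justified by saying that under Corollary~\ref{cor:nontrivunipotence}(3) the unit of $\Mod_R(\Sp^{BG})$ corresponds to $R^{hG}$, rather than ``$A$ is the unit so agrees with its completion,'' which as stated is circular.
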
 
\begin{proof}
We first show that in the above situation, the graded ring $\pi_* (R^{hG})$ is a regular
noetherian ring concentrated in even degrees.\footnote{In the example of
interest below, one can simply compute the ring explicitly and thus the
first two paragraphs become redundant.} 
The fact that $\pi_*(R^{hG})$ is concentrated in even degrees follows from the homotopy
fixed point spectral sequence, which degenerates for degree reasons. 
By Corollary~\ref{cor:nontrivunipotence}, 
one has classes $x_1, \dots, x_t \in \pi_*(R^{hG})$ such that the $x_i$'s
form a regular sequence in $\pi_*(R^{hG})$, the $x_i$'s
map to zero under $\pi_*(R^{hG}) \to \pi_*(R)$, and such that 
one has equivalences
\[ R^{hG}/(x_1, \dots, x_t) \simeq R, \quad \pi_*(R^{hG})/(x_1, \dots, x_t) \simeq \pi_*(R).  \]
The $(x_1, \dots,x_t)$-adic filtration on $\pi_*(R^{hG})$ is complete by the
convergence of the spectral sequence and it has noetherian associated graded, so $\pi_*(R^{hG})$ is noetherian at least as a graded ring.
Therefore, $\pi_*(R^{hG})$ is noetherian as an ungraded ring as well \cite[Th. 1.1]{GY}. 

By completeness, it follows that any maximal graded ideal of $\pi_* (R^{hG})$
contains $(x_1, \dots, x_t)$. 
Given a maximal graded ideal $\mathfrak{m} \in \pi_* (R^{hG})$, it follows that 
the (ungraded) localization 
$\pi_* (R^{hG})_{\mathfrak{m}}$ 
is a regular local ring: in fact, the quotient 
$\pi_* (R^{hG})_{\mathfrak{m}}/(x_1, \dots, x_t)$ by the regular sequence $x_1,
\dots, x_t$ is a regular local ring since it is a local ring of $\pi_*(R)$. By \cite[Th. 2.1]{3local}, this implies that $\pi_*(R^{hG})$ itself is a
regular ring.
Finally, we argue that $\pi_*(R^{hG})$ has finite Krull dimension. Let
$\mathfrak{p} \subseteq \pi_* (R^{hG})$  be a prime ideal of height $h$; up to
replacing $h$  by $h-1$, we can assume that $\mathfrak{p}$ is homogeneous
by \cite[Prop. 1.3]{3local}. 
Up to enlarging $\mathfrak{p}$, we can then assume $(x_1, \dots, x_t) \in
\mathfrak{p}$. But then $\dim \pi_* (R^{hG})_{\mathfrak{p}} \leq 
\dim \pi_*(R)_{\overline{\mathfrak{p}}} + t$ for $\overline{\mathfrak{p}}$ the
image of $\mathfrak{p}$ in $\pi_*(R)$. This shows that $h$ is globally bounded
and thus that $\pi_*(R^{hG})$ has finite Krull dimension. 

Recall now that $\Mod_R(\Sp^{BG})$ is unipotent by 
Corollary~\ref{cor:nontrivunipotence}, and is identified as a symmetric
monoidal $\infty$-category with $(x_1, \dots, x_t)$-complete $R^{hG}$-modules. 
Let $M \in \Mod_R( \Sp^{BG})$ be dualizable. Unwinding the equivalence, it follows that $M^{hG}$ is dualizable in the
$\infty$-category $\widehat{
\Mod_{R^{hG}}}$ of complete $R^{hG}$-modules. 
But by part (2)
Proposition~\ref{descpf}, it follows that $M^{hG}$ is perfect as an $R^{hG}$-module. 
Thus, by Proposition~\ref{prop:unipotencecriterion}, $M$ is perfect in $\Mod_{R}(\Sp^{BG})$ as desired. 
\end{proof}

\section{Applications to $\THH$}

In this section, we give our main applications to $\THH$ and $\TP$. After some
preliminary remarks we prove our main theorem (Theorem~\ref{perfres2}) and give the application to
K\"unneth theorems (Theorem~\ref{ourKunneth}). These results
immediately imply Theorem~\ref{BMthm} of Blumberg and Mandell. Then, we give a slightly
different, second proof of the K\"unneth
theorem based on the observation that $\TP$ is an integral lift of $\HP$  (Theorem~\ref{integrallift}).
Finally, we go on to discuss a similar result in mixed characteristic.

\newcommand{\catk}{\mathrm{Cat}_{\infty, k}^{\mathrm{perf}}}
Let $k$ be a perfect field of positive characteristic. 
One needs the following fundamental calculation of B\"okstedt (cf.
\cite[Section~5]{HM97}) of the homotopy ring of $\THH(k)$. 

\begin{theorem}[B\"okstedt]
\label{bokstedt} There is an isomorphism of graded rings
    $\pi_*\THH(k) \simeq k[\sigma]$ where $|\sigma| = 2$.
\end{theorem}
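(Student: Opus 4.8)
I would first reduce to the prime field $k=\FF_p$ and then compute $\pi_*\THH(\FF_p)$ via the B\"okstedt spectral sequence. For the reduction: since $k$ is perfect, Frobenius acts invertibly on $k$ but annihilates the cotangent complex, so $L_{k/\FF_p}\simeq0$, and hence the Hochschild homology $\THH(k/\FF_p)=\HH(k/\FF_p)$ equals $k$ (its Hochschild--Kostant--Rosenberg filtration has no positive-weight graded pieces). Feeding this into the base-change identity $\THH(k/\FF_p)\simeq\THH(k)\otimes_{\THH(\FF_p)}\FF_p$ yields $\THH(k)\otimes_{\THH(\FF_p)}\FF_p\simeq k$; as $\THH(\FF_p)$ is connective with $\pi_0=\FF_p$, the functor $-\otimes_{\THH(\FF_p)}\FF_p$ is conservative on connective modules, so the natural map $\THH(\FF_p)\otimes_{\FF_p}k\to\THH(k)$ — which becomes the identity of $k$ after this base change — is an equivalence. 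Since $k$ is flat over $\FF_p$, it remains to treat $k=\FF_p$.

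For $k=\FF_p$ I would smash the cyclic bar construction computing $\THH(\FF_p)$ with $H\FF_p$ and filter by skeleta, obtaining the multiplicative B\"okstedt spectral sequence
\[
E^2_{s,t}=\HH_s(\mathcal{A}_*)_t\ \Longrightarrow\ \H_{s+t}(\THH(\FF_p);\FF_p),
\]
where $\mathcal{A}_*=\H_*(H\FF_p;\FF_p)$ is the dual Steenrod algebra (Künneth applies as $\mathcal{A}_*$ is $\FF_p$-free). This is a spectral sequence of $\mathcal{A}_*$-comodules whose abutment is a cofree $\mathcal{A}_*$-comodule, since $\THH(\FF_p)$ is an $H\FF_p$-module. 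Using $\mathcal{A}_*=\FF_p[\xi_1,\xi_2,\dots]\otimes\Lambda(\tau_0,\tau_1,\dots)$ for $p$ odd (resp. $\FF_2[\xi_1,\xi_2,\dots]$ for $p=2$), together with $\HH_*(\FF_p[x])=\FF_p[x]\otimes\Lambda(\sigma x)$ and $\HH_*(\Lambda(y))=\Lambda(y)\otimes\Gamma(\sigma y)$ (here $\sigma$ is the degree-$1$ suspension operator), one computes $E^2=\mathcal{A}_*\otimes\Lambda(\sigma\xi_i)_{i\ge1}\otimes\Gamma(\sigma\tau_j)_{j\ge0}$ (resp. $\mathcal{A}_*\otimes\Lambda(\sigma\xi_i)_{i\ge1}$), with $|\sigma\xi_i|=2p^i-1$, $|\sigma\tau_j|=2p^j$ (and $|\sigma\xi_i|=2^i$ for $p=2$).

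The differentials are the crux. For $p=2$ the spectral sequence degenerates at $E^2$: over $\mathcal{A}_*$ the algebra $E^2$ is generated by the classes $\sigma\xi_i$, each in Hochschild (filtration) degree $1$, so no generator can support a differential — the target would sit in negative filtration — and the Leibniz rule propagates this to all of $E^2$. For $p$ odd the situation is genuinely different: $E^2$ is generated over $\mathcal{A}_*$ by the $\sigma\xi_i$ (filtration $1$) together with the divided-power classes $\gamma_{p^k}(\sigma\tau_j)$, and those with $k\ge1$ (filtration $p^k\ge3$) do support differentials. Determining these differentials — they must annihilate every $\sigma\xi_i$ and all of $\Gamma(\sigma\tau_j)$ for $j\ge1$, leaving only $\Gamma(\sigma\tau_0)$ — is the main obstacle, and is pinned down in B\"okstedt's argument using the $\mathcal{A}_*$-comodule structure and the cofreeness of the abutment. (One can also run this step more efficiently following the recent argument of Krause--Nikolaus.)

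It remains to resolve the multiplicative extensions, where the $\EE_\infty$-ring structure is essential — additively the spectral sequence cannot distinguish a polynomial algebra from a divided-power associated graded. I would use that the suspension operator $\sigma$ commutes with Dyer--Lashof operations and that the bottom operation computes $p$-th powers: for $p=2$, $(\sigma\xi_i)^2=Q^{2^i}(\sigma\xi_i)=\sigma(Q^{2^i}\xi_i)=\sigma\xi_{i+1}$ by Steinberger's computation of the Dyer--Lashof action on $\mathcal{A}_*$, so $\H_*(\THH(\FF_2);\FF_2)=\mathcal{A}_*[\mu]$ with $\mu=\sigma\xi_1$ of degree $2$; the odd case is analogous, with $\Gamma(\sigma\tau_0)$ becoming polynomial on $\sigma\tau_0$. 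Finally, for any $H\FF_p$-module $\H_*(-;\FF_p)$ is the cofree $\mathcal{A}_*$-comodule on its homotopy groups, so comparing with $\mathcal{A}_*\otimes\FF_p[\mu]$ gives $\pi_*\THH(\FF_p)\simeq\FF_p[\mu]$ as graded rings with $|\mu|=2$; writing $\sigma$ for $\mu$ and combining with the reduction yields $\pi_*\THH(k)\simeq k[\sigma]$, $|\sigma|=2$, for every perfect field $k$ of characteristic $p$.
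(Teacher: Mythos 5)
The paper does not actually prove Theorem~\ref{bokstedt}: it is quoted as B\"okstedt's classical computation with a pointer to \cite{HM97}, and the only related argument in the text is the base-change remark in Example~\ref{ex:tcminus} (using $\L_{k/\FF_p}\simeq 0$ for $k$ perfect). So there is no internal proof to compare against; what you have written is an outline of the standard external proof, and as an outline it is essentially correct. Your reduction to $k=\FF_p$ (vanishing of the cotangent complex, hence $\HH(k/\FF_p)\simeq k$, plus conservativity of $-\otimes_{\THH(\FF_p)}H\FF_p$ on connective modules) is exactly the base-change mechanism the paper itself invokes when extending computations from $\FF_p$ to general perfect rings. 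For the prime field, your account of the B\"okstedt spectral sequence, the degeneration at $p=2$, and the resolution of the multiplicative extensions via the compatibility of the suspension with Dyer--Lashof operations and Steinberger's calculation is the standard argument. Two caveats you should make explicit if this were written out in full: first, Steinberger's formulas $Q^{2^i}\bar\xi_i=\bar\xi_{i+1}$ and $Q^{p^j}\bar\tau_j=\bar\tau_{j+1}$ concern the conjugate generators, so the bookkeeping should be carried out with $\bar\xi_i,\bar\tau_j$; second, at odd $p$ the differentials $d^{2p-1}\gamma_{k+p}(\sigma\bar\tau_j)\doteq\sigma\bar\xi_{j+1}\,\gamma_k(\sigma\bar\tau_j)$ do not literally leave $\Gamma(\sigma\tau_0)$ standing --- in particular $\gamma_p(\sigma\bar\tau_0)$ itself supports a differential --- but rather leave $\mathcal{A}_*\otimes\bigotimes_{j\ge 0}\FF_p[\sigma\bar\tau_j]/\bigl((\sigma\bar\tau_j)^p\bigr)$, which only additively agrees with a divided power algebra on a degree-$2$ class; it is the extensions $(\sigma\bar\tau_j)^p=\sigma\bar\tau_{j+1}$ that then yield the polynomial answer. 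Finally, note that you are quoting rather than proving the two hard inputs (the determination of the odd-primary differentials via B\"okstedt's comodule/cofreeness argument or Krause--Nikolaus, and Steinberger's Dyer--Lashof computation); that is a reasonable division of labor here, since the paper itself treats the entire theorem as a citation.
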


Here $\THH(k)$ is naturally an $\EE_\infty$-ring spectrum equipped with an
$S^1$-action. One can thus consider modules in $\Sp^{BS^1}$ over $\THH(k)$.
One has $\pi_*(\THH(k)^{hS^1}) \simeq W(k)[x, \sigma]/(x \sigma -
p)$, a regular noetherian ring\footnote{We refer to \cite[Section~IV-4]{nikolaus-scholze}
for a treatment of this calculation.}. Here $\sigma$ is a lift of the B\"okstedt element and $x \in \pi_{-2} (\THH(k)^{hS^1})$ is a generator which is detected in filtration two in the homotopy fixed point spectral sequence. 
Using Theorem~\ref{technicalperfect}, one obtains the following result,
stated partially as Theorem~\ref{perfectresult} of the introduction. 

\begin{theorem} \label{perfres2} Let $k$ be a perfect field of characteristic
    $p>0$. There is an equivalence of symmetric monoidal
    $\infty$-categories between $\Mod_{\THH(k)}(\Sp^{BS^1})$ and $x$-complete $\THH(k)^{hS^1}$-modules. 
    Moreover any dualizable object in $\Mod_{\THH(k)}(\Sp^{BS^1})$ is perfect.
\end{theorem}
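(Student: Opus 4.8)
The plan is to deduce Theorem~\ref{perfres2} as a direct application of Theorem~\ref{technicalperfect} with the specific choices $G = S^1$ and $R = \THH(k) \in \CAlg(\Sp^{BS^1})$. First I would verify the hypotheses of Theorem~\ref{technicalperfect}. By B\"okstedt's calculation (Theorem~\ref{bokstedt}), $\pi_*\THH(k) \simeq k[\sigma]$ with $|\sigma| = 2$, so $\pi_*(R)$ is concentrated in even degrees and is a regular noetherian ring (a polynomial ring over a field) of finite Krull dimension (namely $1$). Next, one needs $\H^*(BS^1; \pi_0 R) = \H^*(BS^1; k)$ to be a polynomial algebra over $k$ on even-dimensional generators, and indeed $\H^*(BS^1; k) \simeq k[t]$ with $|t| = 2$. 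Thus all hypotheses of Theorem~\ref{technicalperfect} are satisfied, and we conclude that any dualizable object in $\Mod_{\THH(k)}(\Sp^{BS^1})$ is perfect.

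For the first sentence of the theorem — the identification of $\Mod_{\THH(k)}(\Sp^{BS^1})$ with $x$-complete $\THH(k)^{hS^1}$-modules — I would appeal to part (3) of Corollary~\ref{cor:nontrivunipotence} (equivalently, to the symmetric monoidal identification established inside the proof of Theorem~\ref{technicalperfect}): $\Mod_R(\Sp^{BG})$ is identified as a symmetric monoidal $\infty$-category with the $R$-complete objects in $\Mod_{R^{hG}}$, where $R$-completeness means completeness with respect to the regular sequence $x_1, \dots, x_t$ furnished by that corollary. Here one must match this up with the stated description. Using the computation $\pi_*(\THH(k)^{hS^1}) \simeq W(k)[x,\sigma]/(x\sigma - p)$ recalled just before the theorem, the relevant regular sequence reduces to the single element $x \in \pi_{-2}(\THH(k)^{hS^1})$: one checks that $\THH(k)^{hS^1}/x \simeq \THH(k)$ and $\pi_*(\THH(k)^{hS^1})/(x) \simeq k[\sigma] \simeq \pi_*\THH(k)$, consistent with parts (1) and (2) of Corollary~\ref{cor:nontrivunipotence} with $t = 1$. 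Hence $\Mod_{\THH(k)}(\Sp^{BS^1})$ is symmetric monoidally equivalent to the $\infty$-category of $x$-complete $\THH(k)^{hS^1}$-modules.

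The main obstacle — really the only nontrivial point beyond bookkeeping — is confirming that the abstract regular sequence $(x_1,\dots,x_t)$ produced by Corollary~\ref{cor:nontrivunipotence} can be taken to be the single class $x$ appearing in the explicit presentation of $\pi_*(\THH(k)^{hS^1})$. This follows from the degeneration of the homotopy fixed point spectral sequence: since $\H^*(BS^1; k) \simeq k[t]$ is polynomial on one generator in degree $2$, there is exactly one class $x_1 = x$ detected in filtration $2$, and the identifications $\THH(k)^{hS^1}/x \simeq \THH(k)$ and $\pi_*(\THH(k)^{hS^1})/(x) \simeq \pi_*\THH(k)$ are exactly what the degenerate spectral sequence yields. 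The footnote in the excerpt already points to \cite[Section~IV-4]{nikolaus-scholze} for this calculation, so this step can be cited rather than reproved. Everything else is a direct substitution into the already-established Theorem~\ref{technicalperfect} and Corollary~\ref{cor:nontrivunipotence}.
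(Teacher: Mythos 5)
Your proposal is correct and follows exactly the paper's route: the paper also deduces the theorem as a special case of Corollary~\ref{cor:nontrivunipotence} and Theorem~\ref{technicalperfect}, with the hypotheses supplied by B\"okstedt's computation $\pi_*\THH(k)\simeq k[\sigma]$ and the identification $\pi_*(\THH(k)^{hS^1})\simeq W(k)[x,\sigma]/(x\sigma-p)$, so that the regular sequence consists of the single class $x$. Your extra bookkeeping about matching the abstract sequence $(x_1,\dots,x_t)$ with $x$ is exactly the content the paper leaves implicit, and it is handled correctly.
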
 

\begin{proof}
    With B\"okstedt's calculation in hand, this result is a special case of
	 Corollary~\ref{cor:nontrivunipotence} and 
    Theorem~\ref{technicalperfect}.
\end{proof}

We now give the application to the K\"unneth formula. 
We will work with small, idempotent-complete $k$-linear stable
$\infty$-categories (which can be modeled as dg categories). 
These are naturally organized into an $\infty$-category $\catk$.
For any such $\mathcal{C}$, we can consider the topological Hochschild homology
$\THH(\mathcal{C})$, together with its natural $S^1$-action. 
We recall that $\THH$ defines a symmetric monoidal functor 
of $\infty$-categories
\[ \catk  \to \Mod_{\THH(k)}(\Sp^{BS^1}). \]
Compare the discussion in \cite[Section~6]{BGTmult} for stable $\infty$-categories, and we refer to \cite[Th. 14.1]{blumberg-mandell-tp} for a proof for $k$-linear
$\infty$-categories (at least those with a compact generator, to which the general result reduces). 

Recall (cf. \cite[Prop. 1.5]{ToenderAz}, \cite[Th. 3.7]{BGT})
that the dualizable objects in $\catk$ are precisely the smooth and proper
$k$-linear stable $\infty$-categories.

\begin{theorem} 
    \label{ourKunneth}
    Let $k$ be a perfect field of characteristic $p>0$.
    Let $\mathcal{C}, \mathcal{D}$ be $k$-linear dg categories and suppose
    $\mathcal{C}$ is smooth and proper. 
    Then for any closed subgroup $H \subseteq S^1$, $\THH(\mathcal{C})^{tH}$ is a perfect
    $\THH(k)^{tH}$-module and the natural map 
    \[\THH(\mathcal{C})^{tH} \otimes_{\THH(k)^{tH}}\THH(\mathcal{D})^{tH} \to
    \THH(\mathcal{C} \otimes_k \mathcal{D})^{tH}  \]
    is an equivalence.  The same holds with $tH$ replaced by $hH$.
\end{theorem}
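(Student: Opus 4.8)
The plan is to reduce everything to the case $H = S^1$ treated by Theorem~\ref{perfres2}, using only formal properties of the Tate construction together with the observation that $\THH$ is symmetric monoidal into $\Mod_{\THH(k)}(\Sp^{BS^1})$. First I would note that since $\mathcal{C}$ is smooth and proper, it is dualizable in $\catk$ (by \cite{ToenderAz,BGT}), hence $\THH(\mathcal{C})$ is dualizable in $\Mod_{\THH(k)}(\Sp^{BS^1})$, and therefore perfect by Theorem~\ref{perfres2}. Being perfect means $\THH(\mathcal{C})$ lies in the thick subcategory generated by the unit $\THH(k)$; I would then run a thick subcategory argument. Precisely, let $\mathcal{T}$ be the full subcategory of $\Mod_{\THH(k)}(\Sp^{BS^1})$ consisting of those $X$ for which (a) $X^{tH}$ is perfect over $\THH(k)^{tH}$ and (b) the natural map $X^{tH}\otimes_{\THH(k)^{tH}}\THH(\mathcal{D})^{tH}\to (X\otimes_{\THH(k)}\THH(\mathcal{D}))^{tH}$ is an equivalence; and similarly with $tH$ replaced by $hH$. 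Both $-^{tH}$ and $-^{hH}$ are exact functors, so $\mathcal{T}$ is a thick subcategory; it clearly contains the unit $\THH(k)$ (for (a), $\THH(k)^{tH}$ is a ring, hence perfect over itself; for (b), the relevant map is the identity on $\THH(k)^{tH}\otimes_{\THH(k)^{tH}}\THH(\mathcal{D})^{tH}$). Hence $\mathcal{T}$ contains $\THH(\mathcal{C})$.

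The content is then in matching up the identifications. For the K\"unneth map: $\THH$ is symmetric monoidal, so $\THH(\mathcal{C}\otimes_k\mathcal{D})\simeq \THH(\mathcal{C})\otimes_{\THH(k)}\THH(\mathcal{D})$ as objects of $\Mod_{\THH(k)}(\Sp^{BS^1})$, and the map in the statement is obtained by applying $-^{tH}$ (resp. $-^{hH}$) to the canonical lax-monoidal structure map. So property (b) with $X=\THH(\mathcal{C})$ is exactly the assertion that this map is an equivalence, and property (a) is exactly perfectness of $\THH(\mathcal{C})^{tH}$. One subtle point worth spelling out: $-^{hH}$ and $-^{tH}$ are not literally monoidal, only lax monoidal, so the map of (b) really does require an argument and is not an equivalence for formal reasons — but it becomes one after the thick-subcategory reduction because it holds on the unit and both source and target are exact in $X$.

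It remains to record why a closed subgroup $H\subseteq S^1$ causes no difficulty: either $H = S^1$, which is Theorem~\ref{perfres2} (and the ambient $R^{hS^1}$-module story of Corollary~\ref{cor:nontrivunipotence}), or $H$ is finite cyclic, say $H = C_n$. In the latter case the Tate and homotopy-fixed-point constructions for the $C_n$-action can be computed by restricting the $S^1$-action; concretely $X^{hC_n}$ and $X^{tC_n}$ are modules over $\THH(k)^{hC_n}$ and $\THH(k)^{tC_n}$ respectively, and the ring $\pi_*\THH(k)^{tC_n}$ is again even and regular noetherian of finite Krull dimension (indeed for $k = \mathbb{F}_p$ one has $\THH(k)^{tC_n}\simeq \widehat{W(k)}$ concentrated in degree $0$ after $p$-completion; in general this is a standard computation, cf. \cite{nikolaus-scholze}). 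So the same thick-subcategory argument applies verbatim with $S^1$ replaced by $C_n$: perfectness of $\THH(\mathcal{C})$ in $\Mod_{\THH(k)}(\Sp^{BS^1})$ implies perfectness after restriction, hence perfectness of $\THH(\mathcal{C})^{tC_n}$ and the K\"unneth equivalence follow by exactness of $-^{tC_n}$, $-^{hC_n}$ and the base case of the unit.

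The main obstacle I anticipate is purely bookkeeping: one must be careful that the natural transformation appearing in the statement (built from the lax symmetric monoidal structures on $\THH$ and on $-^{tH}$, together with a base-change map along $\THH(k)^{tH}\to$ something) genuinely agrees, as a map, with the one produced by the thick-subcategory formalism, so that checking it on the unit suffices. This is routine but must be done with some care because $-^{tH}$ is only lax monoidal; everything else is a straightforward consequence of Theorem~\ref{perfres2} and exactness.
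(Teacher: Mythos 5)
Your proof is correct and is essentially the argument in the paper: perfectness of $\THH(\mathcal{C})$ in $\Mod_{\THH(k)}(\Sp^{BS^1})$ comes from dualizability plus Theorem~\ref{perfres2}, and then the thick-subcategory argument applied to the exact, lax symmetric monoidal functor $X\mapsto X^{tH}$ (resp.\ $X\mapsto X^{hH}$), checked on the unit, yields both perfectness of $\THH(\mathcal{C})^{tH}$ over $\THH(k)^{tH}$ and the K\"unneth equivalence, uniformly in the closed subgroup $H$. The only blemish is your aside on finite subgroups: no regularity (or computation) of $\pi_*\THH(k)^{tC_n}$ is needed anywhere, and the parenthetical identification of $\THH(\FF_p)^{tC_n}$ with $W(\FF_p)$ concentrated in degree $0$ is false --- this Tate construction vanishes when $n$ is prime to $p$ (the homotopy groups of $\THH(\FF_p)$ are $\FF_p$-vector spaces), and for $n=p$ it has $\pi_2\neq 0$ since the cyclotomic Frobenius $\THH(\FF_p)\to\THH(\FF_p)^{tC_p}$ is an isomorphism on nonnegative homotopy groups; you are probably thinking of the genuine fixed points $\THH(\FF_p)^{C_{p^n}}$ (whose $\pi_0$ is $W_{n+1}(\FF_p)$) or of $\TP(\FF_p)$. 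Since, as you yourself note, the argument uses only exactness and lax symmetric monoidality of $(-)^{tH}$ and $(-)^{hH}$ together with perfectness of $\THH(\mathcal{C})$ over $\THH(k)$ in $\Sp^{BS^1}$, this slip is not load-bearing and the proof stands.
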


\begin{proof} 
    Since the functor 
    $\Mod_{\THH(k)}(\Sp^{BS^1}) \to 
    \Mod_{\THH(k)^{tH}}, X \mapsto X^{tH}$ is lax symmetric monoidal and exact, it
    follows (as in the discussion in the introduction after Theorem~\ref{BMthm}) that 
    it suffices to prove that $\THH(\mathcal{C}) \in \Mod_{\THH(k)}(\Sp^{BS^1})$ is
    perfect. 
    However, the construction $\mathcal{C} \mapsto\THH(\mathcal{C})$ is symmetric
    monoidal, and $\mathcal{C}$ is dualizable in $\catk$.  Since symmetric monoidal functors
    preserve dualizable objects, it follows from
    Theorem~\ref{perfres2} that $\THH(\mathcal{C}) \in
    \Mod_{\THH(k)}(\Sp^{BS^1})$ is perfect, completing the proof. 
\end{proof}

As a complement, we give a slightly different proof of Theorem~\ref{BMthm} based on the
result (Theorem~\ref{integrallift}) that $\TP$ is an integral lift of periodic cyclic homology $\HP(\cdot/k) $. 
This result also 
appears (in the more general setting of perfectoid rings) in the work of Bhatt-Morrow-Scholze \cite{BMS2} on integral $p$-adic Hodge
theory, and plays a role in various key steps there. We include a
proof for the convenience of the reader.

Note to begin that one has an $S^1$-equivariant map of $\EE_\infty$-rings
\[ HW(k) \to\THH(k)  \]
inducing an equivalence $(HW(k))^{tS^1} \simeq\TP(k)$.
When $k  = \mathbb{F}_p$, the map arises from the cyclotomic trace $\widehat{K(
\mathbb{F}_p)}_p \simeq H\mathbb{Z}_p \to\THH( \mathbb{F}_p)$ (the first
equivalence by Quillen
\cite{Qui}); it can also be constructed by computing $\TC( \mathbb{F}_p)$
(\cite{HM97}, \cite[Section~IV-4]{nikolaus-scholze}).  
In general, the same argument gives a map $H\mathbb{Z}_p \to \THH(k)^{hS^1}$ and the calculation of $\pi_0(\THH(k)^{hS^1}) = W(k)$ yields an extension on $\pi_0$. The 
existence of the spectrum level extension to a map from $HW(k)$ follows by obstruction theory from the $p$-adic
vanishing of the cotangent complex of $W(k)$ over $\mathbb{Z}_p$. Since the map
$(HW(k))^{tS^1}\rightarrow\TP(k)$ induces a $W(k)$-linear map on homotopy rings $W(k)[t^{\pm
1}]\rightarrow W(k)[x^{\pm 1}]$, we see it is an equivalence. 

\begin{theorem}[{Compare \cite[Theorem 6.7]{BMS2}}] \label{integrallift}
    Let $k$ be a perfect field of characteristic $p>0$.
    For any $\mathcal{C} \in \catk$, the natural map $\TP(\mathcal{C})
    \otimes_{\TP(k)} Hk^{tS^1} \simeq\TP(\mathcal{C})/p
    \to
    \HP(\mathcal{C}/k)$ is an equivalence. 
\end{theorem}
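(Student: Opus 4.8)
The plan is to reduce the theorem to the single \emph{perfect} object $Hk$ of $\Mod_{\THH(k)}(\Sp^{BS^1})$ together with the base-change description of $\HH(-/k)$, and then to invoke a thick subcategory argument.

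First I would identify $Hk$ — with its $S^1$-action, which is trivial since $Hk$ is concentrated in degree $0$ — with the cofiber $\THH(k)/\sigma$ of multiplication by the B\"okstedt class $\sigma\in\pi_2\THH(k)$, formed inside $\Sp^{BS^1}$. This uses two inputs: that $\sigma$ lifts to $\pi_2(\THH(k)^{hS^1})$, which is part of the computation $\pi_*(\THH(k)^{hS^1})\simeq W(k)[x,\sigma]/(x\sigma-p)$ recalled above, so that multiplication by $\sigma$ is realized $S^1$-equivariantly; and that $\sigma$ is a nonzerodivisor on $\pi_*\THH(k)\simeq k[\sigma]$ (Theorem~\ref{bokstedt}), so that $\THH(k)/\sigma$ is connective with homotopy $k$ in degree $0$ and the canonical map $\THH(k)\to\THH(k)/\sigma$ of $\THH(k)$-modules identifies the target with $\tau_{\leq 0}\THH(k)=Hk$, compatibly with the truncation map. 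In particular $Hk$ is perfect as a $\THH(k)$-module in $\Sp^{BS^1}$ in the sense of Definition~\ref{perfect}, being literally a cofiber of a self-map of the unit.

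Next I would use the base-change identification $\HH(\mathcal{C}/k)\simeq\THH(\mathcal{C})\otimes_{\THH(k)}Hk$ in $\Sp^{BS^1}$, where $Hk$ carries the $\THH(k)$-algebra structure given by the augmentation $\THH(k)\to Hk$; for $\mathcal{C}$ with a compact generator this is the usual base-change formula for relative $\THH$ of rings, and the general case follows since both sides are localizing invariants of $\mathcal{C}\in\catk$. Combined with the first step, $\HH(\mathcal{C}/k)\simeq\THH(\mathcal{C})/\sigma$ in $\Sp^{BS^1}$. Now apply the thick subcategory principle recorded after \eqref{compmap} to the lax symmetric monoidal exact functor $F=(-)^{tS^1}\colon\Mod_{\THH(k)}(\Sp^{BS^1})\to\Sp$, with unit $\mathbbm{1}=\THH(k)$, perfect object $X=Hk$, and arbitrary $Y=\THH(\mathcal{C})$: the natural map $(Hk)^{tS^1}\otimes_{\TP(k)}\TP(\mathcal{C})\to(\THH(\mathcal{C})\otimes_{\THH(k)}Hk)^{tS^1}$ is an equivalence, and its target is $\HH(\mathcal{C}/k)^{tS^1}=\HP(\mathcal{C}/k)$. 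Finally, exactness of $(-)^{tS^1}$ gives $(Hk)^{tS^1}\simeq(\THH(k)/\sigma)^{tS^1}=\TP(k)/\sigma$, and since $x\sigma=p$ with $x$ invertible in $\pi_*\TP(k)\simeq W(k)[x^{\pm1}]$ one has $\TP(k)/\sigma\simeq\TP(k)/p$; therefore $(Hk)^{tS^1}\otimes_{\TP(k)}\TP(\mathcal{C})\simeq\TP(\mathcal{C})/p$, which is also the first equivalence asserted in the theorem. The naturality of all these identifications should be checked, but this is routine as each is induced by a map of modules in the relevant monoidal $\infty$-category.

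The step I expect to be the main obstacle is the base-change identification $\HH(\mathcal{C}/k)\simeq\THH(\mathcal{C})\otimes_{\THH(k)}Hk$ for a general $k$-linear dg category: while standard for perfect modules over a single $k$-algebra, deducing it for all of $\catk$ requires that relative $\THH$ of categories satisfies this base change, which one handles by viewing it as a localizing invariant and reducing to the case of a compact generator. A secondary nuisance is the $S^1$-equivariant and multiplicative bookkeeping in the identification $Hk\simeq\THH(k)/\sigma$, but the comparison of underlying spectra only uses the $\THH(k)$-module structure, and the equivariance is ensured by $\sigma$ lifting to $\pi_2\THH(k)^{hS^1}$.
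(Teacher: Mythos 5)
Your proposal is correct and follows essentially the same route as the paper's proof: identify $Hk\simeq\THH(k)/\sigma$ $S^1$-equivariantly (via the lift of $\sigma$ to $\pi_2\THH(k)^{hS^1}$, i.e.\ the degeneration of the homotopy fixed point spectral sequence), use the base-change equivalence $\HH(\mathcal{C}/k)\simeq\THH(\mathcal{C})\otimes_{\THH(k)}Hk$ in $\Sp^{BS^1}$, and then apply the projection formula for the lax symmetric monoidal exact functor $(-)^{tS^1}$ against the perfect object $Hk$. The only cosmetic difference is that the paper constructs the base-change equivalence via the cyclic bar construction for $\mathcal{C}=\Perf(A)$ (reducing to the compact-generator case), while you phrase the same reduction in terms of localizing invariants, and you additionally spell out $(Hk)^{tS^1}\simeq\TP(k)/\sigma\simeq\TP(k)/p$, which the paper leaves implicit in the statement.
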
 

\begin{proof} 
In fact, one has an equivalence in $\Sp^{BS^1}$
\[\THH(\mathcal{C}) \otimes_{\THH(k)} Hk \simeq\HH(\mathcal{C}/k), \quad
\mathcal{C} \in \catk.   \]
If $\mathcal{C} = \Perf(A)$ for an $\EE_1$-algebra, then the equivalence arises
from the 
equivalence of cyclic objects
\[ N^{\mathrm{cy}}(A) \otimes_{N^{\mathrm{cy}}(k)}  Hk \simeq N^{\mathrm{cy},
k}(A),  \]
where $N^{\mathrm{cy}}$ denotes the cyclic bar construction in 
spectra and $N^{\mathrm{cy}, k}$ in $Hk$-module spectra. 

In $\Mod_{\THH(k)}(\Sp^{BS^1})$,  we observe that $Hk$ is perfect as it
is the cofiber of the map $\Sigma^2\THH(k)\to\THH(k)$ given by multiplication
by $\sigma$, which can be made $S^1$-equivariant by the degeneration of the
homotopy fixed point spectral sequence. It follows thus that
\[\HH(\mathcal{C}/k)^{tS^1} \simeq \left(\THH(\mathcal{C}) \otimes_{\THH(k)}
Hk\right)
^{tS^1} \simeq\TP(\mathcal{C}) \otimes_{\TP(k)} Hk^{tS^1}, \]
as desired. 
\end{proof} 

\begin{proof}[Second proof  of Theorem~\ref{BMthm}]
Let $\mathcal{C}$ be smooth and proper over $k$. 
Since $\THH(\mathcal{C})$ is bounded below and $p$-torsion, $\TP(\mathcal{C})$ is
automatically $p$-complete. To see that $\TP(\mathcal{C})$ is a perfect
$\TP(k)$-module, 
it suffices to show that the homotopy groups of $\TP(\mathcal{C})$ are
finitely generated $W(k)$-modules (Proposition~\ref{whenperfect}). 
However, the homotopy groups $\pi_* (\TP(\mathcal{C})/p )\simeq
\HP_*(\mathcal{C}/k)$ are finite-dimensional $k$-vector spaces, which forces the homotopy groups
of $\TP(\mathcal{C})$ to be finitely generated $W(k)$-modules by
Lemma~\ref{lem:finitelygen}. (Compare with
the argument in the proof of~\cite[Theorem~16.1]{blumberg-mandell-tp}.)

Similarly, if $\mathcal{D}$ is another dg category over $k$ such that $\THH(\mathcal{D})$ is bounded below (e.g., if $\mathcal{D}$ is smooth and proper)
then $\TP(\mathcal{D})$, $\TP(\mathcal{C}\otimes_k\mathcal{D})$, and the tensor product $\TP(\mathcal{C}) \otimes_{\TP(k)}\TP(\mathcal{D})$ are
automatically $p$-complete already. To prove the K\"unneth formula, it thus
suffices to base-change along $\TP(k) \to Hk^{tS^1}$, since reduction modulo
$p$ is conservative for $p$-complete objects. But one already has a
K\"unneth formula in $\HP(\cdot/k)$,\footnote{For instance, this follows in a
similar  (but easier) fashion as dualizable objects in $\fun(BS^1, \Mod_k)$ are perfect in
view of the Postnikov filtration.} so one concludes. 
\end{proof} 

One also has a variant of B\"okstedt's calculation in mixed characteristic. 
Let $A$ be the localization of the ring of integers in a number field at a prime ideal lying over $p$.  
Fix a uniformizer $\pi \in A$. 
Let $S^0[q]$ denote the $\EE_\infty$-ring $\Sigma^\infty_+ \mathbb{Z}_{\geq 0}$. 
We consider the map of $\EE_\infty$-rings $S^0[q] \to HA$ sending $q \mapsto
\pi$.
The following result has now also appeared as~\cite[Proposition~11.10]{BMS2}. We are grateful to Lars Hesselholt for explaining it to us. 
For the reader's convenience, we include a proof. 

\begin{theorem} 
\label{mixcharBok}
    Let $A$ be the localization of the ring of integers in a number field at a
    prime ideal $(\pi)$ lying over $p$ and consider $HA$ as an $S^0[q]$-algebra as above.
    There is an isomorphism of graded rings $\pi_*\THH(A/S^0[q]) \simeq A[\sigma]$ where $|\sigma| = 2$. 
\end{theorem}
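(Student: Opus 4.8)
The plan is to mimic Bökstedt's computation of $\pi_*\THH(\mathbb{F}_p)$ and its generalization to perfect fields, but in the relative setting over $S^0[q]$. The starting point is the base-change formula $\THH(A/S^0[q]) \simeq \THH(A)\otimes_{\THH(S^0[q])} S^0[q]$, where the map $\THH(S^0[q])\to S^0[q]$ is the augmentation (collapse the cyclic bar construction). Since $S^0[q] = \Sigma^\infty_+\mathbb{Z}_{\geq 0}$ is the monoid algebra on a free commutative monoid, one has $\THH(S^0[q]) \simeq S^0[q]\otimes \Sigma^\infty_+ (S^1\text{-worth of } q\text{'s})$; more precisely $\THH(\Sigma^\infty_+ M) \simeq \Sigma^\infty_+(M^{\cy})$ for a commutative monoid $M$, and for $M = \mathbb{Z}_{\geq 0}$ the cyclic bar construction $M^{\cy}$ splits up by total $q$-degree, with the degree-$n$ piece equivalent to $S^1/C_n$ roughly speaking. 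The upshot I expect: $\THH(A/S^0[q])$ is computed by a relative bar-type spectral sequence whose input involves $\THH(A)$ and the homology of these cyclic pieces.

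First I would reduce to understanding $\THH_*(A;\mathbb{F}_p) = \pi_*(\THH(A)\otimes H\mathbb{F}_p)$ and the corresponding relative version, via the Bökstedt-style spectral sequence $\HH_*(\H_*(A;\mathbb{F}_p)/\H_*(S^0[q];\mathbb{F}_p)) \Rightarrow \H_*(\THH(A/S^0[q]);\mathbb{F}_p)$, using that $\H_*(HA;\mathbb{F}_p) = \mathcal{A}_*/\!/E$ is the dual Steenrod algebra modulo an exterior part and $\H_*(S^0[q];\mathbb{F}_p) = \mathbb{F}_p[q]$. The key algebraic input is that $\H_*(A;\mathbb{F}_p)$ as an $\H_*(S^0[q];\mathbb{F}_p)$-module/algebra is well understood: $q$ maps to the class of the uniformizer $\pi$, which in $\H_0 = A/p$-coefficients (or rather with $\mathbb{F}_p$ coefficients, using $A/\pi \hookrightarrow$ perfect field) behaves like the situation for $\mathbb{F}_p$ itself. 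The point is that relative to $S^0[q]$, the ring $HA$ looks "smooth" in the appropriate sense — $A$ is (after the $q$-variable is adjoined) étale-like over the base — so the relative topological Hochschild homology should collapse to a polynomial algebra on a single degree-$2$ class, exactly as in Bökstedt's theorem for $\mathbb{F}_p$ (Theorem~\ref{bokstedt}).

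Concretely, I would run the Bökstedt spectral sequence for the relative $\THH$: its $E_2$-page is $\HH_*^{\H_*(S^0[q];\mathbb{F}_p)}(\H_*(HA;\mathbb{F}_p))$, and the main computation is that this Hochschild homology of the $\mathbb{F}_p[q]$-algebra $\H_*(HA;\mathbb{F}_p)$ is an exterior algebra tensor a divided power (or polynomial) algebra on one generator, yielding $\pi_*\THH(A/S^0[q];\mathbb{F}_p) \cong (A/p)[\sigma]\otimes E(\lambda)$ or similar, then deduce $\pi_*\THH(A/S^0[q]) \cong A[\sigma]$ with $|\sigma|=2$ by a Bockstein/multiplicative-extension argument together with the fact that $\THH(A/S^0[q])$ is connective with $\pi_0 = A$ (the latter since $\HH_0(A/S^0[q]) = A\otimes_{S^0[q]} \cdots = A$, as $q\mapsto\pi$ is surjective enough). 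The $2$-periodicity-free polynomial structure then follows by comparing with the absolute case and tracking the single polynomial generator $\sigma$ through the collapse.

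The main obstacle I anticipate is controlling the relative Bökstedt spectral sequence: showing it degenerates and that there are no multiplicative extensions beyond the expected one, which requires knowing $\H_*(HA;\mathbb{F}_p)$ as an algebra over $\mathbb{F}_p[q]$ precisely enough — in particular that adjoining $q\mapsto\pi$ "kills" the exterior generators that obstruct polynomiality in the absolute computation of $\THH(A)$, leaving just the single polynomial class $\sigma$. This is exactly where the hypothesis that $\pi$ is a uniformizer (rather than an arbitrary element) is used, and it is the heart of the matter; the rest is bookkeeping with spectral sequences that parallels the classical Bökstedt argument recalled in \cite[Section~5]{HM97}.
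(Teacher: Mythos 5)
Your guiding intuition --- that relative to $S^0[q]$, $q \mapsto \pi$, the obstruction to polynomiality disappears --- is correct, but as written the proposal has genuine gaps, and it is also much harder than the route actually needed. First, the stated $E_2$-page of your relative B\"okstedt spectral sequence is not right as stated: $\H_*(HA;\mathbb{F}_p) \cong \H_*(H\mathbb{Z};\mathbb{F}_p)\otimes_{\mathbb{F}_p} A/p$ (not $\mathcal{A}_*/\!/E$ unless $A$ is unramified with residue field $\mathbb{F}_p$), and as a module over $\H_*(S^0[q];\mathbb{F}_p)=\mathbb{F}_p[q]$ it is \emph{not flat}, since $q$ acts through the image of $\pi$ in $A/p$, which is nilpotent. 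Hence the $E_2$-term is a derived (Shukla-type) Hochschild homology over $\mathbb{F}_p[q]$ rather than the ordinary one, and both its identification and its computation are substantially more delicate than the ``bookkeeping'' you describe. Second, the step you yourself flag as the heart of the matter --- degeneration of this spectral sequence and resolution of the multiplicative and Bockstein extensions down to a single polynomial class --- is precisely the hard part (for ramified $A$ even the absolute $\THH_*(A)$ is a difficult computation, in the style of Lindenstrauss--Madsen), and no argument for it is given. Moreover, a mod-$p$/Bockstein analysis would at best pin down the $p$-completion; the integral statement $\pi_*\THH(A/S^0[q])\simeq A[\sigma]$ also requires degreewise finite generation over $A$ and a mod-$\pi$ (not mod-$p$) argument, neither of which appears in the outline.

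The paper's proof avoids all of this. It first shows that $\pi_*\THH(A/S^0[q])$ is degreewise finitely generated over $A$, by comparing with $\HH_*(A/\mathbb{Z}[q])$ via $\THH(A/S^0[q])\otimes_{\THH(\mathbb{Z}[q]/S^0[q])}H\mathbb{Z}[q]\simeq \HH(A/\mathbb{Z}[q])$ and using that $A\otimes_{\mathbb{Z}[q]}A$ is noetherian. Then, instead of any spectral-sequence computation for $A$ itself, it base-changes along $S^0[q]\to S^0$, $q\mapsto 0$: since $\pi$ is a nonzerodivisor, $\THH(A/S^0[q])\otimes_{S^0[q]}S^0\simeq \THH(A/(\pi))=\THH(k)$ for $k$ the (perfect) residue field, so B\"okstedt's absolute theorem (Theorem~\ref{bokstedt}) supplies the entire computation mod $\pi$. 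Finite generation together with the vanishing of the odd-degree mod-$\pi$ homotopy then forces $\pi_*\THH(A/S^0[q])$ to be $\pi$-torsion-free, hence finitely generated free over the local ring $A$, and a lift $\sigma\in\pi_2$ of the B\"okstedt class plus Nakayama gives $A[\sigma]$. If you want to rescue your approach, you must either actually carry out the deferred spectral-sequence analysis (including the non-flat base and the extension problems) or, better, replace it by this reduction to the residue field; as it stands the proposal postpones the main difficulty rather than resolving it.
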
 
\begin{proof}[Proof sketch] 
    Note first that the Hochschild homology groups $\HH_*(A/\mathbb{Z}[q])$ are finitely generated $A$-modules in each degree as  
    $A$ is a finitely generated module over $A \otimes_{\mathbb{Z}[q]} A$,
    which in turn  is a noetherian ring. Since  $\THH(A/S^0[q])
    \otimes_{\THH(\mathbb{Z}[q]/S^0[q])} H\mathbb{Z}[q] \simeq
    \HH(A/\mathbb{Z}[q])$, 
	and the homotopy groups of $\THH(\mathbb{Z}[q]/S^0[q])$ are finitely
	generated $\mathbb{Z}[q]$-modules, it follows easily that
    the homotopy groups of $\THH(A/S^0[q]) $ are finitely generated $A$-modules.
	Here we use repeatedly the following observation: if $R$ is a connective
	$\mathbb{E}_\infty$-ring with $\pi_0(R)$ noetherian and $\pi_i(R)$ finitely
	generated over $\pi_0(R)$, and $M  \in \Mod_R$ connective, the statement
	that the homotopy groups $\pi_j(M \otimes_R  H \pi_0 R)$ are finitely
	generated $\pi_0(R)$-modules implies that the homotopy groups $\pi_j(M)$ are finitely
	generated $\pi_0(R)$-modules. 
	 
	Now we have 
    $\THH(A/S^0[q])\otimes_{S^0[q]}S^0\we\THH(A/(\pi))$ (where $S^0[q] \to S^0$
	sends $q \mapsto 0$).
    By B\"okstedt's calculation in
    Theorem~\ref{bokstedt}, this is also a polynomial ring on a degree two
	 class. 
    It follows that the homotopy groups $\pi_*( \THH(A/S^0[q])$
    (which are finitely generated $A$-modules) are $\pi$-torsion-free and hence
    free, and 
    become a polynomial ring on a class in degree two after taking the quotient by
    $\pi$. 
    Letting $\sigma \in \pi_2( \THH(A/S^0[q])$ be a generator, we now obtain the
    result because $A$ is local.  
\end{proof}

One can use Theorem \ref{mixcharBok} to calculate $\TP(A/S^0[q])$ as in the case of perfect
fields of characteristic $p>0$. 
For example, $\pi_* \TP( \mathbb{Z}_{(p)}/S^0[q]) \simeq
\widehat{\mathbb{Z}_{(p)}[q]}_{(q-p)}[x^{\pm 1}]$ for $|x| = 2$. 
Using regularity as before, one obtains the following result from
Theorem~\ref{technicalperfect}.

\begin{corollary}\label{cor:sq} 
    Let $A$ be the localization of the ring of integers in a number field at a
    prime ideal lying over $p$ and consider $HA$ as an $S^0[q]$-algebra as above.
    In the symmetric monoidal $\infty$-category $\Mod_{\THH(A/S^0[q])}( \Sp^{BS^1})$,
    any dualizable object is perfect. 
\end{corollary}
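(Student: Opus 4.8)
The plan is to obtain the corollary as a direct instance of Theorem~\ref{technicalperfect}, applied with the compact connected Lie group $G = S^1$ and with $R = \THH(A/S^0[q])$, regarded as an object of $\CAlg(\Sp^{BS^1})$ via its canonical circle action. Once the standing hypotheses of that theorem are checked for this particular $R$, its conclusion is exactly the assertion that every dualizable object of $\Mod_R(\Sp^{BS^1})$ is perfect, which is the claim. So the only thing to do is verify those hypotheses.

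First I would check that $\pi_*(R)$ is a regular noetherian ring of finite Krull dimension concentrated in even degrees. By Theorem~\ref{mixcharBok} there is an isomorphism of graded rings $\pi_*\THH(A/S^0[q]) \cong A[\sigma]$ with $|\sigma| = 2$, which is plainly concentrated in even degrees. Since $A$ is the localization of the ring of integers $\mathcal{O}_K$ of a number field at a nonzero prime ideal, it is a discrete valuation ring, hence a regular noetherian ring of Krull dimension one; therefore the polynomial ring $A[\sigma] = \pi_*(R)$ is regular noetherian of Krull dimension two, in particular of finite Krull dimension. Next I would check that $\H^*(BS^1; \pi_0 R)$ is a polynomial algebra over $\pi_0 R$ on even-dimensional generators. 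Here $\pi_0 R = A$ and $\H^*(BS^1; A) \cong A[t]$ with $|t| = 2$, so this holds as well. With both hypotheses verified, Theorem~\ref{technicalperfect} applies verbatim and yields the corollary.

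I do not expect a genuine obstacle at this stage: all of the substantive work has already been packaged into the mixed-characteristic B\"okstedt calculation (Theorem~\ref{mixcharBok}) and the general perfectness criterion (Theorem~\ref{technicalperfect}, whose proof in turn rests on Corollary~\ref{cor:nontrivunipotence} and Proposition~\ref{descpf}), so the present argument is purely a matter of matching hypotheses. If one preferred to bypass the general machinery, one could alternatively compute $\pi_*(R^{hS^1}) = \pi_* \TC^-(A/S^0[q])$ explicitly---for instance $\pi_*\TP(\ZZp/S^0[q]) \cong \widehat{\ZZp[q]}_{(q-p)}[x^{\pm 1}]$ with $|x| = 2$, and similarly for $\TC^-$---verify directly that it is regular noetherian of finite Krull dimension and $(x)$-complete, and then invoke part (2) of Proposition~\ref{descpf} together with the unipotence of $\Mod_R(\Sp^{BS^1})$ from Corollary~\ref{cor:nontrivunipotence} and the criterion Proposition~\ref{prop:unipotencecriterion}; but routing everything through Theorem~\ref{technicalperfect} is cleaner.
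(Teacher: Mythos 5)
Your proposal is correct and follows essentially the same route as the paper: the corollary is deduced by applying Theorem~\ref{technicalperfect} with $G=S^1$ and $R=\THH(A/S^0[q])$, using the mixed-characteristic B\"okstedt calculation of Theorem~\ref{mixcharBok} to see that $\pi_*(R)\cong A[\sigma]$ is evenly graded, regular noetherian of finite Krull dimension (as $A$ is a discrete valuation ring), with $\H^*(BS^1;\pi_0R)$ polynomial on an even generator. Your explicit verification of the hypotheses matches what the paper leaves implicit in the phrase ``using regularity as before.''
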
 

\newcommand{\cata}{\mathrm{Cat}_{\infty, A}^{\mathrm{perf}}}

Let $\cata$ denote the $\infty$-category of small, $A$-linear stable
$\infty$-categories. One has a symmetric monoidal functor
\[\THH(\cdot/S^0[q])  \colon  \cata \to \Mod_{\THH(A/S^0[q])}( \Sp^{BS^1})  \]
and one may define $\TP(\cdot/S^0[q]) \stackrel{\mathrm{def}}{=}
\THH(\cdot/S^0[q])^{tS^1}$.  Using arguments as above, one obtains the following. 

\begin{corollary} 
    Let $A$ be the localization of the ring of integers in a number field at a
    prime ideal lying over $p$ and consider $HA$ as an $S^0[q]$-algebra as above.
    Let $\mathcal{C}, \mathcal{D}$ be $A$-linear stable $\infty$-categories and suppose
    $\mathcal{C}$ is smooth and proper. Then $\TP(\mathcal{C}/S^0[q])$ is a
    perfect $\TP(A/S^0[q])$-module and the natural map
    \[\TP_{}(\mathcal{C}/S^0[q]) \otimes_{\TP(A/S^0[q])}\TP(\mathcal{D}/S^0[q]) \to
    \TP( \mathcal{C}\otimes_A \mathcal{D}/S^0[q])\] is an equivalence.
\end{corollary}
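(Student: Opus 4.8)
The plan is to repeat the proof of Theorem~\ref{ourKunneth} verbatim, with $\THH(\cdot/k)$ replaced by $\THH(\cdot/S^0[q])$ and Theorem~\ref{perfres2} replaced by Corollary~\ref{cor:sq}. First I would recall that $\THH(\cdot/S^0[q]) \colon \cata \to \Mod_{\THH(A/S^0[q])}(\Sp^{BS^1})$ is symmetric monoidal (by the formalism of \cite[Section~6]{BGTmult}, the identification of the target resting on Theorem~\ref{mixcharBok}), and that the dualizable objects of $\cata$ are exactly the smooth and proper $A$-linear stable $\infty$-categories, by the same argument as over a field (cf. \cite[Prop. 1.5]{ToenderAz}, \cite[Th. 3.7]{BGT}). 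Since symmetric monoidal functors preserve dualizable objects, $\THH(\mathcal{C}/S^0[q])$ is dualizable in $\Mod_{\THH(A/S^0[q])}(\Sp^{BS^1})$ whenever $\mathcal{C}$ is smooth and proper.

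By Corollary~\ref{cor:sq}, every dualizable object of $\Mod_{\THH(A/S^0[q])}(\Sp^{BS^1})$ is in fact perfect, so $\THH(\mathcal{C}/S^0[q])$ lies in the thick subcategory generated by the unit $\THH(A/S^0[q])$. I would then apply the functor $X \mapsto X^{tS^1}$ from $\Mod_{\THH(A/S^0[q])}(\Sp^{BS^1})$ to $\Mod_{\TP(A/S^0[q])}$, which is lax symmetric monoidal and exact; as explained in the introduction after Theorem~\ref{BMthm}, for such a functor $F$ the comparison map \eqref{compmap} is an equivalence once its first argument is perfect, and $F$ carries perfect objects to perfect objects. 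Taking $F(\cdot) = (\cdot)^{tS^1}$, this gives at once that $\TP(\mathcal{C}/S^0[q]) = \THH(\mathcal{C}/S^0[q])^{tS^1}$ is a perfect $\TP(A/S^0[q])$-module and that
\[
\TP(\mathcal{C}/S^0[q]) \otimes_{\TP(A/S^0[q])} \TP(\mathcal{D}/S^0[q]) \;\longrightarrow\; \TP(\mathcal{C}\otimes_A \mathcal{D}/S^0[q])
\]
is an equivalence for every $A$-linear stable $\infty$-category $\mathcal{D}$, which is the assertion.

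I do not expect a genuine obstacle: all the real work has been done in Corollary~\ref{cor:sq} (hence ultimately in Theorem~\ref{technicalperfect} and in the mixed-characteristic B\"okstedt computation of Theorem~\ref{mixcharBok}). The only points needing a remark are that $\THH(\cdot/S^0[q])$ is symmetric monoidal with values in $\Mod_{\THH(A/S^0[q])}(\Sp^{BS^1})$ and that the characterization of the dualizable objects of $\cata$ as the smooth and proper categories continues to hold over the base ring $A$; both are routine adaptations of the statements used over a perfect field.
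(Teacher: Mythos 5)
Your proposal is correct and is exactly what the paper intends: the paper gives no separate proof here, saying only that the corollary follows "using arguments as above," i.e.\ by running the proof of Theorem~\ref{ourKunneth} with $\THH(\cdot/S^0[q])$ in place of $\THH$ and Corollary~\ref{cor:sq} in place of Theorem~\ref{perfres2}, which is precisely what you spell out.
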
 


If $\pi \in A$ is a uniformizer, one also has a map 
\[ S^0 [q^{\pm 1}] \to HA, \quad q \mapsto 1 + \pi.  \]
One can carry out a slight variant of the above calculations for $\THH(\cdot/S^0[q^{\pm
1}])$ and 
$\TP(\cdot/S^0[q^{\pm 1}])$, and replace the base-change $q \mapsto 0$ with $q
\mapsto 1$. One obtains
\[ \pi_* \THH(A/S^0[q^{\pm 1}]) \simeq A[\sigma], \quad |\sigma| = 2.  \]

Suppose now that the base ring $A$ is given by $\mathbb{Z}_{(p)}[\zeta_p]$ and
$\pi = \zeta_p - 1$.
In this case, 
there is an isomorphism
\[ \pi_*\TP(\mathbb{Z}_{(p)}[\zeta_p]/S^0[q^{\pm 1}]) \simeq 
\widehat{\mathbb{Z}_{(p)}[q^{\pm 1}]}_{\Phi_p(q)}[x^{\pm 1}] . 
\]
Here $\Phi_p(q)$ is the $p$th cyclotomic polynomial. 
Moreover, one obtains a functor
\[ \mathrm{Cat}_{\infty, \mathbb{Z}_{(p)}[\zeta_p]}^{\mathrm{perf}} \to \Mod_{\TP(
\mathbb{Z}_{(p)}[\zeta_p]/S^0[q^{\pm 1}])},  \]
which analogs of our arguments show satisfies a K\"unneth formula for smooth and proper dg
categories over $\mathbb{Z}_{(p)}[\zeta_p]$.
Just as $\TP$ is analogous to $2$-periodic crystalline
cohomology, $\TP(\cdot/S^0[q^{\pm 1}])$ in this case is roughly analogous to a $2$-periodic version of the $q$-de Rham cohomology
proposed by Scholze \cite{scholze-qdef}. We refer to \cite{BMS2} for details. 


\section{A finiteness result}

We next apply our version of the K\"unneth theorem to prove the following finiteness
result for the topological cyclic homology $\TC(\mathcal{C})$ of a smooth and
proper dg category $\mathcal{C}$ over a \emph{finite} field. 

\begin{theorem} 
\label{TCfinite}
Suppose $k$ is a {finite} field of characteristic $p$. 
Let $\mathcal{C} \in \catk$ be smooth and proper. 
Then
$\TC(\mathcal{C})$ is a perfect $H\mathbb{Z}_p$-module.
\end{theorem}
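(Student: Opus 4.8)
The plan is to compute $\TC(\mathcal{C})$ via the Nikolaus--Scholze formula and reduce the statement to the strengthened Blumberg--Mandell theorem (Theorem~\ref{perfres2}) together with B\"okstedt's calculation. Since $\mathcal{C}$ is smooth and proper, $\THH(\mathcal{C})$ is perfect in $\Mod_{\THH(k)}(\Sp^{BS^1})$ by Theorem~\ref{perfres2}; applying the lax symmetric monoidal exact functors $(-)^{hS^1}$ and $(-)^{tS^1}$ then shows that $\TC^{-}(\mathcal{C}) = \THH(\mathcal{C})^{hS^1}$ is a perfect $\TC^{-}(k)$-module and $\TP(\mathcal{C})$ is a perfect $\TP(k)$-module, and moreover --- comparing exact functors on the thick subcategory generated by $\THH(k)$ and using~\eqref{compmap} --- yields canonical identifications $\TP(\mathcal{C}) \simeq \TC^{-}(\mathcal{C}) \otimes_{\TC^{-}(k)} \TP(k)$, both along the canonical map $\mathrm{can}$ and along the cyclotomic Frobenius $\varphi$. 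Because $\pi_*\TC^{-}(k) \simeq W(k)[x,\sigma]/(x\sigma - p)$ and $\pi_*\TP(k) \simeq W(k)[x^{\pm 1}]$ are regular noetherian of finite Krull dimension, Proposition~\ref{whenperfect} shows $\pi_*\TC^{-}(\mathcal{C})$ and $\pi_*\TP(\mathcal{C})$ are finitely generated over these rings, hence finitely generated over $W(k)$ in each degree, hence --- because $k$ is \emph{finite} --- finitely generated over $\mathbb{Z}_p$ in each degree.

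Next I would invoke the Nikolaus--Scholze formula~\cite{nikolaus-scholze}: since $\THH(\mathcal{C})$ is bounded below and $p$-torsion (so that $\TC^{-}(\mathcal{C})$, $\TP(\mathcal{C})$ and $\TC(\mathcal{C})$ are all $p$-complete), there is a fiber sequence
\[ \TC(\mathcal{C}) \longrightarrow \TC^{-}(\mathcal{C}) \xrightarrow{\ \varphi - \mathrm{can}\ } \TP(\mathcal{C}), \]
and $\TC(\mathcal{C})$ is bounded below. By the long exact sequence and the previous paragraph, $\pi_n\TC(\mathcal{C})$ is finitely generated over $\mathbb{Z}_p$ for every $n$, and $\TC(\mathcal{C})$ is an $H\mathbb{Z}_p$-module via $H\mathbb{Z}_p \simeq \tau_{\geq 0}\TC(k)$. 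By the example above characterizing perfect $p$-complete $H\mathbb{Z}_p$-modules (those with finitely generated total homotopy), it therefore remains only to prove that $\TC(\mathcal{C})$ is bounded --- and since it is already bounded below, this amounts to showing that $\varphi - \mathrm{can}$ is an isomorphism on $\pi_n$ for $n \gg 0$.

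This last point is the crux, and the step I expect to require the most care: it requires playing $\varphi$ and $\mathrm{can}$ off against one another in high degrees. On one side, $\varphi$ is an isomorphism on $\pi_n$ for $n \gg 0$: the cofiber of $\varphi\colon \TC^{-}(k) \to \TP(k)$ is concentrated in negative degrees --- a direct computation from the rings above, the relation $\sigma = p x^{-1}$ in $\pi_*\TP(k)$, and the fact that $\varphi$ acts on $\pi_0 = W(k)$ by the Witt-vector Frobenius, which is bijective since $k$ is perfect (the $\TC^{-}/\TP$-level form of the Segal conjecture, cf.~\cite{nikolaus-scholze}) --- so by base change along $\varphi$ the cofiber of $\varphi\colon \TC^{-}(\mathcal{C}) \to \TP(\mathcal{C})$ is a finite colimit of shifts of that bounded-above spectrum, hence bounded above. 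On the other side, $\mathrm{can}$ becomes arbitrarily $p$-divisible in high degrees: writing $\pi_*\TC^{-}(\mathcal{C})$ as a finitely generated graded $\pi_*\TC^{-}(k)$-module on homogeneous generators $g_1,\dots,g_r$, for $n$ large its degree-$n$ part is spanned over $W(k)$ by the elements $\sigma^{(n-|g_i|)/2}g_i$ (as $(\pi_*\TC^{-}(k))_{2m} = W(k)\sigma^m$ for $m>0$), while $\mathrm{can}(\sigma^m) = p^m x^{-m}\in \pi_{2m}\TP(k)$ is a $p^m$-multiple of a generator, so $\mathrm{can}$ carries $\pi_n\TC^{-}(\mathcal{C})$ into $p\cdot\pi_n\TP(\mathcal{C})$ once $n > \max_i|g_i| + 2$. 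Combining the two, for $n \gg 0$ we may factor $\varphi - \mathrm{can} = \varphi\circ(\mathrm{id} - \varphi^{-1}\mathrm{can})$ on $\pi_n$, where $\varphi^{-1}\mathrm{can}$ maps the finitely generated (hence $p$-adically complete) $\mathbb{Z}_p$-module $\pi_n\TC^{-}(\mathcal{C})$ into its $p$-multiples; thus $\mathrm{id} - \varphi^{-1}\mathrm{can}$ is invertible by a $p$-adic geometric series, so $\varphi - \mathrm{can}$ is an isomorphism on $\pi_n$. Hence $\pi_n\TC(\mathcal{C}) = 0$ for $n \gg 0$, which completes the proof.
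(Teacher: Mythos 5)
Your overall skeleton matches the paper's (the fiber sequence \eqref{eqformula}, finite generation of $\pi_*\TC^-(\mathcal{C})$ and $\pi_*\TP(\mathcal{C})$ from the K\"unneth theorem, and ``$\varphi$ an isomorphism, $\mathrm{can}$ divisible by $p$'' in high degrees), but there is a genuine gap at the step you treat as routine: the identification $\TP(\mathcal{C})\simeq \TC^-(\mathcal{C})\otimes_{\TC^-(k),\varphi}\TP(k)$ ``along the cyclotomic Frobenius.'' This does \emph{not} follow from a thick-subcategory argument via \eqref{compmap} in $\Mod_{\THH(k)}(\Sp^{BS^1})$: the comparison map you need is built from $\varphi$, which exists only by virtue of the cyclotomic structure and is \emph{not} a natural transformation of lax symmetric monoidal functors on Borel-equivariant $\THH(k)$-modules, so perfectness of $\THH(\mathcal{C})$ in $\Mod_{\THH(k)}(\Sp^{BS^1})$ (Theorem~\ref{perfres2}) buys you nothing here. (The thick-subcategory argument does work for $\mathrm{can}$, which is why that base change holds for arbitrary $\mathcal{C}$.) The category in which $\varphi$ \emph{is} natural is $\Mod_{\THH(k)}(\CycSp)$, and there $\THH(\mathcal{C})$ is in general \emph{not} perfect --- that is exactly the counterexample of Section~\ref{sec:examples} (supersingular K3 surfaces) --- so the obvious repair also fails. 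The paper's actual proof of this statement is Proposition~\ref{propequi}, which uses a different idea: by Theorem~\ref{ourKunneth} both $\mathcal{C}\mapsto \TC^-(\mathcal{C})\otimes_{\TC^-(k),\varphi}\TP(k)$ and $\mathcal{C}\mapsto\TP(\mathcal{C})$ are symmetric monoidal functors on the symmetric monoidal $\infty$-category of smooth and proper $k$-linear categories, every object of which is dualizable, and a symmetric monoidal natural transformation between such functors is automatically an equivalence (Proposition~\ref{symmonprop}). You need this input (or some substitute, e.g.\ a Segal-conjecture-type argument not available in this paper) before your ``$\varphi$ is an isomorphism on $\pi_n$ for $n\gg 0$'' step can get off the ground; the rest of that step (cofiber of $\varphi_k$ bounded above, $\mathrm{can}(\sigma)=px^{-1}$, geometric series on finitely generated $\mathbb{Z}_p$-modules) is fine and parallels the paper.

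A secondary, smaller issue: you dispose of negative degrees by asserting that $\TC(\mathcal{C})$ is bounded below. That is true for bounded-below cyclotomic spectra, but it does not follow from the fiber sequence (neither $\TC^-(\mathcal{C})$ nor $\TP(\mathcal{C})$ is bounded below); it requires the comparison with genuine $\TR$-fixed points from \cite{nikolaus-scholze}, which you should cite or prove. The paper avoids this entirely by arguing symmetrically in low degrees: there $\mathrm{can}$ is an isomorphism (since $\TP(\mathcal{C})\simeq\TC^-(\mathcal{C})[1/x]$ and $x$ acts invertibly on $\pi_i\TC^-(\mathcal{C})$ for $i\ll 0$) while $\varphi$ is divisible by $p$, so $\varphi-\mathrm{can}$ is again an isomorphism and $\pi_i\TC(\mathcal{C})$ vanishes for $i\ll 0$.
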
 

Of course, the above result fails for $\THH(\mathcal{C})^{hS^1}$, already for
$\mathcal{C} = \Perf(k)$. The finiteness for $\TC$ follows from the
Nikolaus--Scholze formula for $\TC$ in terms of $\THH^{hS^1}$ and $\TP$ and in
particular the interactions with the cyclotomic Frobenius. After gathering the
necessary facts about $\TC$ below and giving a lemma on natural transformations
of symmetric monoidal functors, we prove Theorem~\ref{TCfinite} at the end of
the section.

\begin{example} 
    Suppose $\mathcal{C} = \Perf(X)$ where $X$ is a smooth and projective variety
    over a finite field $k$.
    In this case, there is a stronger, more refined result of Geisser and Hesselholt \cite{GH}. They show 
    \cite[Prop. 5.1.1]{GH}
    that $\pi_i\TC(X)$ is finite for $i \neq 0, -1$. (It also follows from their
    descent spectral sequence and~\cite[Prop.~4.18]{gros-suwa} that
    $\pi_i\TC(X)$ is a finitely generated $W(k)$-module for $i=0,-1$.)
    Earlier computations of Hesselholt (see \cite[Theorem~B]{Hesselholt}) imply that the
    homotopy groups of $\TR(X)$ vanish in degrees $>\dim X$ and hence that the
    homotopy groups of $\TC(X)$ vanish in degrees $> \dim X$ since $\TC(X)$ is
    the spectrum of $F$-fixed points of $\TR(X)$;
    for descent-theoretic reasons \cite[Section~3]{GH} they vanish in degrees $< -\dim X -1$. 
    In addition, they show that $\TC(X)$ is identified with the $p$-adic \'etale
    $K$-theory of $X$ provided $X$ is smooth over $k$
    (see~\cite[Theorem~4.2.6]{GH}).
\end{example} 

\begin{definition} 
    In this section, we write $\TC^-(\mathcal{C})$ for $\THH(\mathcal{C})^{hS^1}$. 
\end{definition} 

We will need a number of preliminaries. 
In particular, we will use the Nikolaus--Scholze \cite{nikolaus-scholze} description of the
$\infty$-category $\CycSp$
of cyclotomic spectra. We will restrict to the $p$-local case.
\begin{definition}[{\cite[Def. II.1.6]{nikolaus-scholze}}]
\label{def:cycsp}
The homotopy theory
$\CycSp$ of  cyclotomic spectra is the presentably symmetric monoidal stable
$\infty$-category of pairs $$(X \in \Sp^{BS^1}, \{\varphi_p \colon  X \to
X^{tC_p}\}_{p \ \text{prime}})$$ where, for each prime $p$,
the map 
$\varphi_p$ is $S^1$-equivariant for 
the natural $S^1/C_p$-action on $X^{tC_p}$ and the natural identification $S^1
\simeq S^1/C_p$. 
For $X \in \CycSp_{}$, 
the \emph{topological cyclic homology} $\TC(X)$ is defined as the mapping
spectrum $\Hom_{\CycSp}(\mathbbm{1}, X)$ for $\mathbbm{1} \in \CycSp$ the unit. 
\end{definition} 

By \cite[Section~II.6]{nikolaus-scholze}, the above definition
agrees with earlier definitions of cyclotomic spectra (e.g., those in terms
of genuine equivariant stable homotopy theory) when the
underlying spectrum $X$ is bounded below. 
In the rest of the paper, $X$ will always be local at a fixed prime $p$. In
this case, the Tate constructions $X^{t C_q}$ for $q \neq p$ vanish, so that
the only relevant map is $\varphi \stackrel{\mathrm{def}}{=} \varphi_p$. 

Given
an object $X \in \CycSp$ such that $X$ is bounded below and in addition
$p$-complete, then $X^{hS^1}$ and $X^{tS^1}$ are also
$p$-complete\footnote{This follows by induction up the Postnikov tower.}
and one obtains a
Frobenius map $\varphi \colon  X^{hS^1} \to X^{tS^1} $, in addition to the canonical map
$\mathrm{can} \colon  X^{hS^1} \to X^{tS^1}$.  We will use the
fundamental formula \cite[Prop. II.1.9, Rmk. II.4.3]{nikolaus-scholze}
(valid for bounded below $p$-complete $X \in \CycSp$)
\begin{equation}
\TC(X) \simeq \mathrm{eq} \left( \mathrm{can}, \varphi \colon  X^{hS^1} \rightrightarrows X^{tS^1} \right).
\end{equation}

For a stable $\infty$-category $\mathcal{C}$, the topological Hochschild
homology construction $\THH(\mathcal{C})$ naturally admits the structure of a
cyclotomic spectrum (compare \cite{ayala-mg-rozenblyum}). For $k$-linear dg
categories one has a symmetric monoidal functor
\[\THH \colon \catk \to \Mod_{\THH(k)}(\CycSp ). \]
There is a related discussion of the symmetric
monoidality in~\cite[Section~6]{BGTmult} and we believe this is known to other authors as well.
For example, one can use the symmetric monoidality of the classical point-set
constructions and map that to the new $\infty$-category of cyclotomic spectra
to get the desired functor.

Suppose $\THH(\mathcal{C})$ is bounded below, e.g., if $\mathcal{C}$ is smooth
and proper over $k$ or if $\mathcal{C} = \Perf(A)$ for $A$ a connective
$\EE_1$-algebra in $k$-modules.
Then, in this language, the \emph{topological cyclic homology} $\TC(\mathcal{C})$ can
be defined as \begin{equation} \label{eqformula} \TC(\mathcal{C}) =\TC(\THH(\mathcal{C})) 
= \mathrm{eq} \left( \mathrm{can}, \varphi \colon \TC^-(\mathcal{C}) \rightrightarrows
\TP(\mathcal{C}) \right).\end{equation}

\begin{example} \label{ex:tcminus}
    We need two basic properties of the two maps $\mathrm{can}$ and $\varphi$. 
    Here we use more generally that for any perfect ring $\pi_* \TC^-(k) \simeq W(k)[x, \sigma]/(x \sigma - p)$ for $x \in
    \pi_{-2}$ and $\sigma \in \pi_2$. When $k=\FF_p$, this calculation follows
    from~\cite[Section~IV.4]{nikolaus-scholze}. The case of a general perfect
    ring follows by base-change using that if $k$ is perfect, the cotangent
    complex $\L_{k/\FF_p}\we 0$. See also~\cite[Section~6]{BMS2}.
    \begin{enumerate}
        \item The map $\mathrm{can} $ carries $x \in \pi_{-2}\TC^-(k)$ to an invertible
            element in $\TP(k)$ by~\cite[Section~IV.4]{nikolaus-scholze}; there it
            is proved for $k=\FF_p$ and the general case follows from the
            remarks above and naturality. For any
            $\mathcal{C}$, it follows that the map $\TC^-(\mathcal{C})[1/x]
            \to\TP(\mathcal{C})$ is an equivalence since $\TP(\Cscr)$ is
            obtained by the extension of scalars
            $\TP(k)\otimes_{\TC^-(k)}\TC^-(\Cscr)$
            by~\cite[Lemma~IV.4.12]{nikolaus-scholze}.
        \item The map $\varphi$ carries $\sigma \in \pi_2\TC^-(k)$ to an invertible element
            in $\TP(k)$. The map $\varphi$ induces an equivalence $\TC^-(k)[1/\sigma] \to\TP(k)$. 
            For any $\mathcal{C} \in \catk$ such that $\THH(\mathcal{C})$
            is bounded below, one has a map $\TC^-( \mathcal{C})[1/\sigma] \to
            \TP(\mathcal{C})$ which is $\varphi$-semilinear. 
            Alternatively, one has a map of $\TP(k)$-modules $\TC^-(\mathcal{C})
            \otimes_{\TC^-(k)}{_\varphi\TP(k)} \to\TP(\mathcal{C})$, where the map $\TC^-(k) \to\TP(k)$
            is $\varphi$.
    \end{enumerate}
\end{example} 

We now prove some facts about these invariants for smooth and proper dg
categories. First we need a preliminary proposition about dualizability.
\begin{proposition}\label{symmonprop} 
\begin{enumerate}
\item  
Let $\mathcal{T}$ be a symmetric monoidal $\infty$-category and let $\fun(
\Delta^1, \mathcal{T})$ denote the $\infty$-category of arrows in $\mathcal{T}$,
with the pointwise symmetric monoidal structure. Then any dualizable object
$f \colon  X_1 \to X_2$ of
$\fun(\Delta^1, \mathcal{T})$ has the property that the map $f$ is an
equivalence.
\item 
Let $\mathcal{T}, \mathcal{T}'$ be a symmetric monoidal $\infty$-categories. 
Let $F_1, F_2 \colon  \mathcal{T} \to \mathcal{T}'$ be symmetric monoidal functors and
let $t \colon  F_1 \to F_2$ be a symmetric monoidal natural transformation. Suppose 
every object of $\mathcal{T}$ is dualizable. Then $t$ is an equivalence. 
\end{enumerate}
\end{proposition}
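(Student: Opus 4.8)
The plan is to prove (1) first and then deduce (2) from it. For (1), the key observation is that a dualizable object of $\fun(\Delta^1,\Tscr)$ is, in particular, dualizable when evaluated at each vertex, and its dual is computed pointwise. So if $f\colon X_1\to X_2$ is dualizable with dual $g\colon Y_1\to Y_2$, then $Y_i$ is dual to $X_i$ in $\Tscr$, and the evaluation and coevaluation maps of $f$ are maps of arrows whose components at the two vertices are the evaluation and coevaluation maps for $X_1$ and $X_2$ respectively. The crucial point is the compatibility square: the coevaluation $\1\to X_i\otimes Y_i$ is natural in $i$, i.e. we have a commuting square relating $\coev_{X_1}$ and $\coev_{X_2}$ via $f\otimes g$, and similarly for evaluation. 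First I would write down the triangle identity for $f$ in $\fun(\Delta^1,\Tscr)$; evaluated at the vertex $0$ it gives the triangle identity for $X_1$, and at vertex $1$ the triangle identity for $X_2$. Then I would use naturality of $\coev$ together with the fact that $\coev_{X_1}$ factors through $f\otimes g$ applied to $\coev$... more precisely, I would argue that $f$ admits a two-sided inverse built from $g$, the evaluation/coevaluation data, and the identification $Y_i\simeq \mathbb{D}X_i$: the composite $X_1\xrightarrow{\coev_{X_2}\otimes\id}X_2\otimes Y_2\otimes X_1\to\cdots$ using the arrow structure produces an inverse to $f$. The honest way to package this: the functor $\fun(\Delta^1,\Tscr)\to\Tscr\times\Tscr$ (evaluation at the two vertices) is symmetric monoidal, hence preserves dualizable objects and duals, so $g$ is levelwise dual to $f$; but also the two evaluation functors, together with the natural transformation $\ev_0\to\ev_1$ given by the arrow structure, fit into the picture, and chasing the (co)evaluation data shows $f$ is invertible.

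For (2), I would invoke (1) via a standard reindexing. A symmetric monoidal natural transformation $t\colon F_1\to F_2$ between symmetric monoidal functors $\Tscr\to\Tscr'$ is the same datum as a symmetric monoidal functor $\widetilde{t}\colon \Tscr\to\fun(\Delta^1,\Tscr')$, where $\fun(\Delta^1,\Tscr')$ carries the pointwise symmetric monoidal structure; composing with evaluation at $0$ and $1$ recovers $F_1$ and $F_2$, and $\widetilde t$ sends an object $X$ to the arrow $t_X\colon F_1(X)\to F_2(X)$. Now if every object of $\Tscr$ is dualizable, then since symmetric monoidal functors preserve dualizable objects, $\widetilde t(X)=t_X$ is a dualizable object of $\fun(\Delta^1,\Tscr')$ for every $X\in\Tscr$. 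By part (1), $t_X$ is an equivalence for every $X$, which is exactly the statement that $t$ is a natural equivalence.

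I expect the main obstacle to be part (1): making precise and bookkeeping-correct the claim that the dual of an arrow is computed vertexwise together with a compatibility square, and then extracting an honest inverse to $f$ from the triangle identities. The subtlety is that the coevaluation for $f$ in $\fun(\Delta^1,\Tscr)$ is not just a pair of coevaluations but a pair together with a filling of the naturality square $\coev_{X_1}\Rightarrow(f\otimes g)\circ\coev$... wait, this is backwards — rather, the constraint is that the arrow $\1\xrightarrow{=}\1$ maps to the arrow $(X_1\otimes Y_1\to X_2\otimes Y_2)$, so $\coev_{X_2}=(f\otimes g)\circ\coev_{X_1}$ (up to coherent homotopy). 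Combined with the dual relation for evaluation and the two triangle identities, a diagram chase of moderate length shows $g$ must be an equivalence, hence so is $f$. The cleanest route may be to avoid explicit chases altogether by arguing abstractly: the subcategory of dualizable objects of $\fun(\Delta^1,\Tscr)$ maps, via the two evaluations, faithfully enough into dualizable objects on each side, and the arrow structure forces the map to be invertible; I would present whichever of the abstract or hands-on argument is shorter in writing.
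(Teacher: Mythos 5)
Your reduction of (2) to (1) is exactly the paper's: a symmetric monoidal natural transformation $t\colon F_1\to F_2$ is the same as a symmetric monoidal functor $\mathcal{T}\to\Fun(\Delta^1,\mathcal{T}')$ for the pointwise structure, and symmetric monoidal functors preserve dualizable objects. Your setup for (1) also matches the paper: since evaluation at the two vertices is symmetric monoidal, the dual of $f\colon X_1\to X_2$ is an arrow $\bar f\colon X_1^{\vee}\to X_2^{\vee}$, and the (co)evaluation data consist of the levelwise (co)evaluations together with the compatibilities $\coev_{X_2}\simeq(f\otimes\bar f)\circ\coev_{X_1}$ and $\ev_{X_1}\simeq\ev_{X_2}\circ(\bar f\otimes f)$, with the triangle identities holding at each vertex. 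The gap is that the decisive step --- actually extracting an inverse of $f$ from these data --- is only announced (``a diagram chase of moderate length shows $g$ must be an equivalence''), and the one composite you do write down, $X_1\xrightarrow{\coev_{X_2}\otimes\id}X_2\otimes X_2^{\vee}\otimes X_1\to\cdots$, is aimed the wrong way: it produces a map \emph{out of} $X_1$, whereas an inverse must go from $X_2$ to $X_1$. Since all of the content of the proposition sits in that chase, part (1) is not proved as written.

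The gap is closable, and in fact more directly than in the paper. All the data above descend to the homotopy category of $\mathcal{T}$, which is an ordinary symmetric monoidal category, and $f$ is an equivalence if and only if it becomes invertible there; so it suffices to work $1$-categorically. Put $h=(\id_{X_1}\otimes\ev_{X_2})\circ(\id_{X_1}\otimes\bar f\otimes\id_{X_2})\circ(\coev_{X_1}\otimes\id_{X_2})\colon X_2\to X_1$. Then $h\circ f\simeq\bigl(\id_{X_1}\otimes(\ev_{X_2}\circ(\bar f\otimes f))\bigr)\circ(\coev_{X_1}\otimes\id_{X_1})\simeq(\id_{X_1}\otimes\ev_{X_1})\circ(\coev_{X_1}\otimes\id_{X_1})\simeq\id_{X_1}$ by the evaluation compatibility and the triangle identity for $X_1$, while $f\circ h\simeq(\id_{X_2}\otimes\ev_{X_2})\circ\bigl(((f\otimes\bar f)\circ\coev_{X_1})\otimes\id_{X_2}\bigr)\simeq(\id_{X_2}\otimes\ev_{X_2})\circ(\coev_{X_2}\otimes\id_{X_2})\simeq\id_{X_2}$ by the coevaluation compatibility and the triangle identity for $X_2$; so $h$ is a two-sided inverse. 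Be aware that the paper's own chase does \emph{not} give this directly: it only shows that $f$ admits a section, and it must then restrict to dualizable objects and apply the duality equivalence $\Fun(\Delta^1,\mathcal{T})\simeq\Fun(\Delta^1,\mathcal{T})^{\op}$ to conclude that $f^{\vee}$ admits a section as well, whence $f$ is an equivalence. So if you follow the chase you had in mind, you must either land on an explicit two-sided inverse as above or supply such a supplementary argument; asserting invertibility at that point is precisely the missing piece.
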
 
\begin{proof} The first assertion implies the second, so we focus on the first. 
Let $\vee$ denote duality on $\mathcal{T}$.
If $f \colon  X \to Y$ has a dual, then the source and target have to be the duals of
$X$ and $Y$ since the evaluation functors are symmetric monoidal. Thus, the
dual of $f \colon X\rightarrow Y$ is an arrow
$\bar{f} \colon  X^{\vee} \to Y^{\vee}$. We claim that $\bar{f}^{\vee}$ is the
inverse of $f$.

To see this, we draw some diagrams. We write $\ev, \coev$ for evaluation and coevaluation maps, respectively. Since $\bar{f}$ is the dual of $f$, one has a commutative triangle
$$
\xymatrix{
\mathbbm{1} \ar[d]^{\coev_X} \ar[rd]^{\coev_Y} \\
X \otimes X^{\vee} \ar[r]^{f \otimes \bar{f}} & Y \otimes Y^{\vee}.}
$$
As a result, it follows that the diagram
$$
\xymatrix{
Y^{\vee} \ar[d]^{\id \otimes \coev_X} \ar[r]^{\id} & Y^{\vee} \ar[dd]^{\id \otimes \coev_Y} \\
Y^{\vee} \otimes X\otimes X^{\vee} \ar[rd]^{\id\otimes f \otimes \bar{f}}     \ar[d]_{\id \otimes f \otimes \id} \\
Y^{\vee} \otimes Y \otimes X^{\vee} \ar[r]_{\id \otimes \id \otimes \bar{f}}
\ar[d]^{\ev_Y \otimes \id} &   Y^{\vee}\otimes Y \otimes Y^{\vee} \ar[d]^{\ev_Y \otimes \id} \\ 
X^{\vee} \ar[r]^{\bar{f}} & Y^{\vee}
}
$$
commutes.
Chasing both ways around the diagram, one finds that $\id_{Y^{\vee}} \we \bar{f} \circ f^{\vee}$. 
Dualizing again,  we get that $f \circ \bar{f}^{\vee}$ is equivalent to the
identity of $Y$. In particular, $f$ admits a section. To see that $f$ is
actually an equivalence, assume without loss of generality that all objects are dualizable.
Now apply the symmetric monoidal equivalence $\vee \colon  \fun(\Delta^1, \mathcal{T})
\simeq \fun(\Delta^1, \mathcal{T})^{op}$. It follows that $f^{\vee}$ admits a
section too. Therefore, $f$ is an equivalence.
\end{proof} 

\begin{proposition} \label{propequi}
\label{TCplus}
Let $k$ be a perfect field of characteristic $p>0$.
For $\mathcal{C} \in \catk$ smooth and proper, the $\varphi$-semilinear map 
\( \varphi \colon \TC^-(\mathcal{C})[1/\sigma] \to\TP(\mathcal{C})  \)
is an equivalence. Equivalently, one has an equivalence of $\TP(k)$-modules
\[\TC^-(\mathcal{C})\otimes_{\TC^-(k), \varphi} \TP(k) \simeq\TP(\mathcal{C}).  \]
\end{proposition}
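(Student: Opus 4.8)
The plan is to realize the map in question as the component of a symmetric monoidal natural transformation between two symmetric monoidal functors on a category in which every object is dualizable, and then invoke Proposition~\ref{symmonprop}(2). First I would dispense with the equivalence of the two formulations: since $\varphi$ sends $\sigma$ to a unit and induces an equivalence $\TC^-(k)[1/\sigma]\xrightarrow{\;\sim\;}\TP(k)$ by Example~\ref{ex:tcminus}(2), base change along $\varphi$ identifies $\TC^-(\mathcal C)\otimes_{\TC^-(k),\varphi}\TP(k)$ with the localization $\TC^-(\mathcal C)[1/\sigma]$, compatibly with the maps to $\TP(\mathcal C)$. So it suffices to show that for $\mathcal C$ smooth and proper the $\TP(k)$-module map $t_{\mathcal C}\colon \TC^-(\mathcal C)\otimes_{\TC^-(k),\varphi}\TP(k)\to\TP(\mathcal C)$ of Example~\ref{ex:tcminus}(2) is an equivalence.

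Next I would set up the functors. Let $\mathcal T\subseteq\catk$ be the full symmetric monoidal subcategory spanned by the dualizable (equivalently, smooth and proper) objects; by construction every object of $\mathcal T$ is dualizable. Restricting the symmetric monoidal functor $\THH\colon\catk\to\Mod_{\THH(k)}(\Sp^{BS^1})$ to $\mathcal T$, each $\THH(\mathcal C)$ is dualizable and hence, by Theorem~\ref{perfres2}, perfect in $\Mod_{\THH(k)}(\Sp^{BS^1})$. Consequently the lax symmetric monoidal exact functors $(-)^{hS^1}$ and $(-)^{tS^1}$ have invertible K\"unneth maps on these objects (this is exactly Theorem~\ref{ourKunneth} for $H=S^1$), so $\TC^-=\THH(-)^{hS^1}$ and $\TP=\THH(-)^{tS^1}$ restrict to honestly symmetric monoidal functors $\mathcal T\to\Mod_{\TC^-(k)}$ and $\mathcal T\to\Mod_{\TP(k)}$. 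Post-composing the former with the symmetric monoidal base change $-\otimes_{\TC^-(k),\varphi}\TP(k)\colon\Mod_{\TC^-(k)}\to\Mod_{\TP(k)}$ gives a symmetric monoidal functor $F_1\colon\mathcal T\to\Mod_{\TP(k)}$, and I would set $F_2=\TP|_{\mathcal T}$.

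The crucial input is that the cyclotomic Frobenius assembles into a lax symmetric monoidal natural transformation $\TC^-\to\TP$ of functors on $\catk$ covering the $\EE_\infty$-ring map $\varphi\colon\TC^-(k)\to\TP(k)$; this follows from the symmetric monoidality of the cyclotomic-spectrum-valued functor $\THH\colon\catk\to\Mod_{\THH(k)}(\CycSp)$ together with the fact that, on bounded below $p$-complete objects, the Frobenius $X^{hS^1}\to X^{tS^1}$ underlies a lax symmetric monoidal transformation of the corresponding functors $\Mod_{\THH(k)}(\CycSp)\to\Sp$ (here one uses that $\THH(\mathcal C)$ is bounded below and $p$-complete for $\mathcal C$ smooth and proper, so that $\varphi$ and the requisite Tate identifications are available). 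Linearizing this transformation over $\TP(k)$ and restricting to $\mathcal T$ yields a symmetric monoidal natural transformation $t\colon F_1\to F_2$ whose component at $\mathcal C$ is the map $t_{\mathcal C}$ above. Since every object of $\mathcal T$ is dualizable, Proposition~\ref{symmonprop}(2) forces $t$ to be an equivalence, and evaluating at $\mathcal C$ completes the argument.

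I expect the main obstacle to be the third paragraph: carefully organizing, within the Nikolaus--Scholze formalism, the cyclotomic Frobenius into a lax symmetric monoidal transformation $\TC^-\to\TP$ (and its base ring map $\varphi\colon\TC^-(k)\to\TP(k)$), so that the $\TP(k)$-linearization $t$ is manifestly natural and monoidal. Everything else is formal: it combines the perfectness theorem (Theorem~\ref{perfres2}), the K\"unneth theorem (Theorem~\ref{ourKunneth}), and the rigidity statement Proposition~\ref{symmonprop}(2).
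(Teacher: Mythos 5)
Your proposal is correct and follows essentially the same route as the paper: the paper's proof also restricts to the symmetric monoidal subcategory of smooth and proper objects, uses Theorem~\ref{ourKunneth} to make both sides symmetric monoidal functors to $\Mod_{\TP(k)}$, observes the Frobenius comparison is a symmetric monoidal natural transformation, and concludes by Proposition~\ref{symmonprop}. Your third paragraph simply spells out the monoidal naturality of $\varphi$ that the paper asserts without elaboration.
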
 
\begin{proof} 
Let $\mathcal{T}$ denote the $\infty$-category of smooth and proper objects in
$\catk$. Then both sides of the above displayed map yield symmetric monoidal functors to
$\Mod_{\TP(k)} $ in view of Theorem~\ref{ourKunneth}; for the right-hand-side
this is the Blumberg--Mandell theorem. The natural map is a symmetric monoidal
natural transformation, so the result follows from Proposition~\ref{symmonprop}.
\end{proof} 

\begin{proposition} 
    Let $k$ be a perfect field of characteristic $p>0$.
Let $\mathcal{C} \in \catk$ be smooth and proper.
\begin{enumerate}
\item  
For $i \gg 0$, the map $\sigma \colon \pi_i\TC^-(\mathcal{C}) \to \pi_{i+2}
\TC^-(\mathcal{C})$ is an isomorphism. 

\item
For $i \ll 0$, the map $x \colon  \pi_i\TC^-(\mathcal{C}) \to \pi_{i-2}
\TC^-(\mathcal{C})$ is an isomorphism. 

\item For $i \gg 0$, the map $\varphi \colon  \pi_i\TC^-(\mathcal{C}) \to \pi_i
\TP(\mathcal{C})$ is an isomorphism. 
\item 
 For $i \ll 0$, the map $\mathrm{can} \colon  \pi_i\TC^-(\mathcal{C}) \to \pi_i
\TP(\mathcal{C})$ is an isomorphism. 
\end{enumerate}
\end{proposition}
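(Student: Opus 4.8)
The plan is to deduce all four statements from the perfectness results already established, together with the explicit structure of $\pi_*\TC^-(k) \simeq W(k)[x,\sigma]/(x\sigma - p)$ and $\pi_*\TP(k) \simeq W(k)[x^{\pm 1}]$. The key observation is that $\THH(\mathcal{C})$, being dualizable in $\Mod_{\THH(k)}(\Sp^{BS^1})$ (since $\mathcal{C}$ is smooth and proper and $\THH$ is symmetric monoidal), is \emph{perfect} there by Theorem~\ref{perfres2}. Hence $\TC^-(\mathcal{C}) = \THH(\mathcal{C})^{hS^1}$ is a perfect $\TC^-(k)$-module and $\TP(\mathcal{C}) = \THH(\mathcal{C})^{tS^1}$ is a perfect $\TP(k)$-module. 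In particular $\pi_*\TC^-(\mathcal{C})$ is a finitely generated $\pi_*\TC^-(k)$-module, and likewise $\pi_*\TP(\mathcal{C})$ is finitely generated over $\pi_*\TP(k) = W(k)[x^{\pm 1}]$, which is concentrated in even degrees; in particular $\pi_*\TP(\mathcal{C})$ is bounded in the sense of being $2$-periodic after inverting $x$, i.e.\ degreewise finitely generated over $W(k)$.

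For (1) and (2): since $\pi_*\TC^-(\mathcal{C})$ is a finitely generated graded module over $W(k)[x,\sigma]/(x\sigma-p)$, I would analyze the $\sigma$-action and $x$-action separately. Write $M_* = \pi_*\TC^-(\mathcal{C})$. The module $M_*[1/\sigma]$ is a finitely generated module over $\pi_*\TC^-(k)[1/\sigma] \simeq W(k)[\sigma^{\pm 1}]$ (using $x = p/\sigma$), which is a graded field-like ring in the sense that it is $2$-periodic; hence $M_*[1/\sigma]$ is $2$-periodic, so $\sigma\colon M_i \to M_{i+2}$ is an isomorphism for $i$ large. More carefully: the $\sigma$-power-torsion submodule of $M_*$ is again finitely generated (noetherianness), hence bounded above in degree, so in degrees $i \gg 0$ the map $\sigma$ is injective; and $M_*/\sigma M_*$ is a finitely generated module over $\pi_*\TC^-(k)/\sigma \simeq k[x]$, which is bounded above, so $\sigma$ is surjective in high degrees too. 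This gives (1). Statement (2) is entirely symmetric with the roles of $x$ and $\sigma$ interchanged, using $\pi_*\TC^-(k)/x \simeq k[\sigma]$, which is bounded below, and the fact that the $x$-power-torsion is finitely generated hence bounded below.

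For (3) and (4): I would combine (1) and (2) with Example~\ref{ex:tcminus}. By part (1) of that example, $\mathrm{can}\colon \TC^-(\mathcal{C})[1/x] \xrightarrow{\ \sim\ } \TP(\mathcal{C})$; since $\pi_*(\TC^-(\mathcal{C})[1/x]) = M_*[1/x]$ and, by (2), $x$ is an isomorphism on $M_i$ for $i \ll 0$, localizing at $x$ does not change homotopy groups in sufficiently negative degrees, so $\mathrm{can}\colon \pi_i\TC^-(\mathcal{C}) \to \pi_i\TP(\mathcal{C})$ is an isomorphism for $i \ll 0$, giving (4). Dually, by Proposition~\ref{propequi} (which is where smoothness and properness enter crucially, via the Blumberg--Mandell/K\"unneth input), $\varphi\colon \TC^-(\mathcal{C})[1/\sigma] \xrightarrow{\ \sim\ } \TP(\mathcal{C})$ is an equivalence; since by (1) the map $\sigma$ is an isomorphism on $M_i$ for $i \gg 0$, inverting $\sigma$ does not change homotopy groups in sufficiently positive degrees, so $\varphi\colon \pi_i\TC^-(\mathcal{C}) \to \pi_i\TP(\mathcal{C})$ is an isomorphism for $i \gg 0$, giving (3).

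\textbf{The main obstacle} I anticipate is the bookkeeping in (1) and (2): one must be careful that "finitely generated graded module over a noetherian graded ring" translates correctly into "the torsion is bounded in one direction and the reduction mod the relevant element is bounded in the other direction," and that these bounds are genuinely one-sided in the expected direction. The ring $\pi_*\TC^-(k)$ has the subtle feature that it is \emph{not} concentrated in nonnegative or nonpositive degrees — it spreads in both directions, with $x$ pushing down and $\sigma$ pushing up — so the argument really does rely on the asymmetry that $\pi_*\TC^-(k)/\sigma \simeq k[x]$ lives in nonpositive degrees while $\pi_*\TC^-(k)/x \simeq k[\sigma]$ lives in nonnegative degrees. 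Making that precise (e.g.\ by choosing homogeneous generators of $M_*$ and the torsion submodule explicitly, and reading off degree bounds) is routine but is where an error would most easily creep in; everything else is a direct appeal to the structural results proved earlier.
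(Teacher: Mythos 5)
Your proof is correct, and for parts (3) and (4) it is exactly the paper's argument: invert $x$ (resp.\ $\sigma$) and use Example~\ref{ex:tcminus}(1) (resp.\ Proposition~\ref{propequi}) together with the eventual isomorphisms from (2) (resp.\ (1)) to see that localization does not change homotopy in the relevant range. The only real divergence is in (1)--(2): the paper disposes of these in one line by a thick subcategory argument --- the property ``$\sigma$ (resp.\ $x$) is an isomorphism on $\pi_i$ for $i\gg 0$ (resp.\ $i\ll 0$)'' holds for $\TC^-(k)$ itself and is stable under cofiber sequences, retracts, and shifts, hence passes to every perfect $\TC^-(k)$-module --- whereas you instead convert perfectness into finite generation of $\pi_*\TC^-(\mathcal{C})$ over $W(k)[x,\sigma]/(x\sigma-p)$ (in effect Proposition~\ref{whenperfect}) and then do graded commutative algebra: the $\sigma$- and $x$-power torsion submodules are finitely generated, hence bounded above (resp.\ below) in degree, and $M_*/\sigma M_*$, $M_*/xM_*$ are finitely generated over $k[x]$, $k[\sigma]$, hence bounded above (resp.\ below). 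Both routes are valid consequences of Theorem~\ref{ourKunneth}/Theorem~\ref{perfres2}; the paper's closure argument is shorter and avoids the grading bookkeeping you rightly flag as the delicate point, while yours has the mild advantage of producing explicit degree bounds. One small caution: your opening heuristic that $M_*[1/\sigma]$ is $2$-periodic does not by itself yield (1) (the localization map need not be an isomorphism in any range a priori), but your subsequent ``more careful'' torsion-plus-quotient argument is the actual proof and is sound.
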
 
\begin{proof} 
Assertions 1 and 2 follow from the fact that $\TC^-(\mathcal{C})$ is a perfect
$\TC^-(k)$-module by Theorem~\ref{ourKunneth} and the fact that they are true
for $\TC^-(k)$ (recalling Example~\ref{ex:tcminus}). Assertion 3 now follows from Assertion 1 and
Proposition~\ref{TCplus}. 
Assertion 4 follows from Assertion 2 and the fact that $\TP(\mathcal{C}) \simeq
\TC^-(\mathcal{C})[1/x]$ under $\mathrm{can}$.
\end{proof} 

\begin{proof}[Proof of Theorem~\ref{TCfinite}]
If $k$ is a finite field, then the $\mathbb{Z}_p$-modules $\pi_i\TC^-(\mathcal{C}),
\pi_i\TP(\mathcal{C})$ are finitely generated by Theorem~\ref{ourKunneth}.
Moreover, for $i \gg 0$, one finds that $\varphi$ is an isomorphism while
$\mathrm{can}$ is divisible by $p$ (as $\mathrm{can}(\sigma)$ is divisible by
$p$ in $\TP(k)$), while for $i \ll 0$, 
$\mathrm{can}$ is an isomorphism while $\varphi$ is divisible by $p$. The equalizer
formula 
\eqref{eqformula}
for $\TC$ yields the assertion. 
\end{proof} 
\section{A counterexample for cyclotomic spectra}\label{sec:examples}

Let $k$ be a perfect field of characteristic $p>0$ and let $\Cscr$ be a smooth
and proper dg category over $k$.
In view of the main result of the previous section and the fact that
$\THH(\Cscr)$ is dualizable in $\Mod_{\THH(k)}(\CycSp)$, one may now ask if
$\THH(\mathcal{C})$ is perfect in $\Mod_{\THH(k)}(\CycSp)$. 

We show that this fails. For a supersingular K3 surface $X$ over $k$, we show that
$\THH(X)  = \THH(\Perf(X))$ is not perfect in $\CycSp$. 
In fact, we show that 
$\TF(X)$ (see below)
is not compact as a $\TF(k)$-module spectrum. This will rely on basic facts about
crystalline cohomology and its description via the de Rham-Witt complex (in
particular for supersingular K3 surfaces), as well
as the connection between the fixed points of $\THH$ and the de Rham-Witt
complex.

\begin{definition} 
Recall that if $Y \in \CycSp$, then 
$Y$ in particular defines a genuine $C_{p^n}$-spectrum for each
$n$.\footnote{This was the basis of classical definitions of cyclotomic
spectra. If one follows instead 
Definition~\ref{def:cycsp}, we refer to the proof of Theorem II.4.10 in \cite{nikolaus-scholze} for a definition of the fixed points and to the proof of Theorem II.6.3 in loc. cit.  for a construction of the genuine spectrum.
 } 
 As a result, one can form the fixed points $\TR^n(Y) = Y^{C_{p^n}}$. 
The spectrum $\TF(Y)$ can be defined as the homotopy limit $\TF(Y) = \lim_n
\TR^n (Y)$
where the transition maps are the natural maps $F \colon  \TR^{n+1}(Y) \to \TR^n(Y)$ (inclusions of fixed points).
Thus $\TF$ defines an exact, lax symmetric monoidal functor
\[ \TF \colon  \CycSp \to \Sp.   \]
For a scheme $X$, we will write $\TF(X) = \TF( \Perf(X))$. 
\end{definition}

We refer to \cite{Hesselholt} for the basic structure theorems for
$\TR^{n}$ and $\TF$ of
smooth schemes over a perfect field $k$ and to \cite{HM97} for the
calculations over $k$ itself. In particular, the results of \cite{Hesselholt}
show that the homotopy groups of $\TF$  are closely related to the cohomology
groups of the de Rham--Witt complex \cite{illusie-derham-witt}. 
For a smooth algebra $A/k$, we let $W_n  \Omega_A^\bullet$ denote the level $n$ de
Rham-Witt complex of $A$ and $W \Omega_A^\bullet$ denote the de Rham--Witt
complex of $A$, so that $W \Omega_A^\bullet = \lim_R W_n
\Omega_A^\bullet$ where the maps in the diagram are the restriction maps $R$. 
Recall also that the tower of graded abelian groups $\left\{W_n
\Omega_A^\bullet\right\}$ is equipped with Frobenius maps $F \colon  W_{n+1}
\Omega_A^\bullet \to W_n \Omega_A^\bullet$. 

\begin{theorem}[{Hesselholt--Madsen \cite[Theorems 3.3 and 5.5]{HM97}}]
    Let $k$ be a perfect field of characteristic $p>0$.
\begin{enumerate}
\item 
One has $\pi_* \THH(k)^{C_{p^n}} \simeq W_{n+1}(k)[\sigma_n]$ for $|\sigma_n| = 2$.
The map $F \colon  \pi_* \THH(k)^{C_{p^{n+1}}} \to \pi_* \THH(k)^{C_{p^n}}$ is the Witt
vector Frobenius on $W_{n+1}(k)$ and sends $\sigma_{n+1}$ to $\sigma_n$.
\item One has 
$\TF_*(k)\we W(k)[\sigma]$ for $|\sigma|  = 2$. 

\end{enumerate}
\end{theorem}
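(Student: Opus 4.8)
The plan is to reprove the Hesselholt--Madsen calculation by induction on $n$, starting from Bökstedt's theorem and exploiting the cyclotomic structure of $\THH(k)$. Since $\THH(k)$ is bounded below, the cyclotomic structure of Definition~\ref{def:cycsp} agrees with the classical genuine one, so for each $n\ge 1$ there is a fundamental cofiber sequence
\[ \THH(k)_{hC_{p^n}}\xrightarrow{\ \mathrm{Nm}\ }\THH(k)^{C_{p^n}}\xrightarrow{\ R\ }\THH(k)^{C_{p^{n-1}}}, \]
obtained from the isotropy separation sequence $\THH(k)_{hC_p}\to\THH(k)^{C_p}\to\Phi^{C_p}\THH(k)$ by passing to $C_{p^n}/C_p$-fixed points, using the cyclotomic identification $\Phi^{C_p}\THH(k)\simeq\THH(k)$ of $S^1/C_p\simeq S^1$-spectra together with the fact that $\THH(k)_{hC_p}$ is induced as a $C_{p^n}/C_p$-spectrum, so that its categorical fixed points recover $\THH(k)_{hC_{p^n}}$. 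The base case $n=0$ is Theorem~\ref{bokstedt}, since $\THH(k)^{C_{p^0}}=\THH(k)$ and $W_1(k)=k$.

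For the inductive step I assume $\pi_*\THH(k)^{C_{p^{n-1}}}\cong W_n(k)[\sigma_{n-1}]$, concentrated in even nonnegative degrees. The first input is that $\pi_m\THH(k)_{hC_{p^n}}\cong k$ for $m$ even and nonnegative, and vanishes otherwise. One reads this off the homotopy-orbit spectral sequence $H_s(BC_{p^n};\pi_t\THH(k))\Rightarrow\pi_{s+t}\THH(k)_{hC_{p^n}}$: by Bökstedt's calculation $E^2$ is a copy of $k$ in each bidegree with $s\ge 0$ and $t\ge 0$ even, and its differentials are pinned down by the vanishing of the Connes operator on $\pi_*\THH(k)=k[\sigma]$ (which forces $d^2=0$) and by Bökstedt periodicity, exactly as in Hesselholt--Madsen; alternatively one computes $\THH(k)^{hC_{p^n}}$ and $\THH(k)^{tC_{p^n}}$ from the known rings $\pi_*\THH(k)^{hS^1}$ and $\pi_*\TP(k)$ via the fibration $S^1/C_{p^n}\to BC_{p^n}\to BS^1$ and then uses the norm cofiber sequence $\THH(k)_{hC_{p^n}}\to\THH(k)^{hC_{p^n}}\to\THH(k)^{tC_{p^n}}$. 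Granting this, the long exact sequence of the fundamental cofiber sequence collapses to short exact sequences
\[ 0\to k\xrightarrow{\ \mathrm{Nm}\ }\pi_{2j}\THH(k)^{C_{p^n}}\xrightarrow{\ R\ }W_n(k)\,\sigma_{n-1}^{j}\to 0\qquad(j\ge 0), \]
with $\pi_*\THH(k)^{C_{p^n}}$ vanishing in all other degrees.

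The remaining step is to identify this graded ring with $W_{n+1}(k)[\sigma_n]$, and I expect this extension problem to be the main obstacle, since it is where the Witt-vector structure genuinely enters. In degree zero one shows $\pi_0\THH(k)^{C_{p^n}}\cong W_{n+1}(k)$ using the unit: the cyclotomic trace yields a map $W(k)\to\pi_0\THH(k)^{C_{p^n}}$ (for $k=\FF_p$ from the computation of $\TC(\FF_p)$, and for general perfect $k$ by base change along $\L_{k/\FF_p}\simeq 0$, as in Example~\ref{ex:tcminus}), which is compatible with $R$ and the Witt restriction; since $\THH(k)^{C_{p^n}}$ is $p$-complete and the displayed sequence has length $n+1$, this map factors through an isomorphism $W_{n+1}(k)\xrightarrow{\sim}\pi_0\THH(k)^{C_{p^n}}$, and in particular $\mathrm{Nm}$ induces the Witt Verschiebung on $\pi_0$. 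Choosing $\sigma_n\in\pi_2\THH(k)^{C_{p^n}}$ lifting $\sigma_{n-1}$ (possible since $R$ is surjective), one checks by multiplicativity, using that the degree-zero extension is nonsplit and that this propagates under $\sigma_n$-multiplication, that $\sigma_n^{j}$ generates $\pi_{2j}\THH(k)^{C_{p^n}}$ freely over $W_{n+1}(k)$; hence $\pi_*\THH(k)^{C_{p^n}}\cong W_{n+1}(k)[\sigma_n]$. The claim about $F\colon\THH(k)^{C_{p^{n+1}}}\to\THH(k)^{C_{p^n}}$ follows by naturality: $F$ is the inclusion of fixed points, a ring map commuting with $R$, so on $\pi_0$ it must be the unique ring map $W_{n+2}(k)\to W_{n+1}(k)$ lifting the Witt Frobenius, and $F(\sigma_{n+1})$ is a unit multiple of $\sigma_n$, which is normalized to an equality.

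Finally, part~(2) is formal. One has $\TF(k)=\lim_n\THH(k)^{C_{p^n}}$ along the maps $F$, and each transition map $W_{n+2}(k)[\sigma_{n+1}]\to W_{n+1}(k)[\sigma_n]$ is degreewise surjective between finitely generated modules, so the inverse system satisfies Mittag--Leffler; therefore $\lim^1$ vanishes and $\pi_*\TF(k)=\lim_n\pi_*\THH(k)^{C_{p^n}}\cong W(k)[\sigma]$ with $|\sigma|=2$.
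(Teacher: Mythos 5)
Your strategy --- induction along the fundamental cofiber sequence $\THH(k)_{hC_{p^n}}\to\THH(k)^{C_{p^n}}\xrightarrow{R}\THH(k)^{C_{p^{n-1}}}$ --- is the classical Hesselholt--Madsen route (the paper itself does not reprove this theorem; it quotes it from \cite{HM97}), but the key computational input you feed into it is false, and the induction collapses with it. You assert that $\pi_m\THH(k)_{hC_{p^n}}\cong k$ for $m$ even nonnegative and vanishes otherwise. Already $\pi_1\THH(k)_{hC_{p^n}}\cong k\neq 0$: in the homotopy-orbit spectral sequence the class $E^2_{1,0}=\H_1(C_{p^n};k)=k$ can neither support nor receive any differential (every candidate source or target lies in negative $s$ or negative $t$), so no pattern of differentials --- Connes operator or not --- can produce your claimed answer; equivalently, the truncation $\THH(k)\to Hk$ shows $\pi_1$ of the orbits is $\H_1(BC_{p^n};k)$. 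More seriously, your short exact sequences $0\to k\to\pi_{2j}\THH(k)^{C_{p^n}}\xrightarrow{R}W_n(k)\sigma_{n-1}^j\to 0$ would make $R$ surjective on homotopy in every even degree with kernel $k$, and this is incompatible with the very theorem you are proving: in fact $R(\sigma_n)$ is a unit multiple of $p\,\sigma_{n-1}$ (so $R$ is not surjective on $\pi_{2j}$ for $j\geq 1$, and is zero there when $n=1$), and correspondingly the odd homotopy of the orbits is large, $\pi_{2j-1}\THH(k)_{hC_{p^n}}\cong W_{\min(j,n)}(k)$. If $R$ were surjective with kernel $k$ in all degrees, then the $R$-limit would give $W(k)[\sigma]$ and the $F$-limit would degenerate; it is precisely the $p$-divisibility of $R(\sigma_n)$ (dually, the surjectivity of $F$) that makes $\TF_*(k)\cong W(k)[\sigma]$ while $\TR_*(k)\cong W(k)$, and which makes $R-1$ invertible in positive even degrees in the computation of $\TC(\FF_p)$. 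Consequently your later steps --- ``choose $\sigma_n$ lifting $\sigma_{n-1}$, possible since $R$ is surjective'' and the extension argument built on the claimed sequences --- do not go through.

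A correct version of the induction cannot run on the orbit spectral sequence with that collapse; it has to go through the homotopy fixed points and Tate constructions. One computes $\THH(k)^{hC_{p^n}}$ and $\THH(k)^{tC_{p^n}}$ (for finite subgroups these spectral sequences have genuinely nontrivial differentials, unlike the $S^1$-case, which is why $\pi_0$ comes out finite), and one needs the Segal-conjecture/Tsalidis-type input that the canonical map $\THH(k)^{C_{p^n}}\to\THH(k)^{hC_{p^n}}$ --- equivalently, that the cyclotomic Frobenius $\varphi\colon\THH(k)\to\THH(k)^{tC_p}$ --- is an equivalence in nonnegative degrees; this is how \cite{HM97} and \cite[Section~IV.4]{nikolaus-scholze} proceed. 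Your degree-zero identification $\pi_0\THH(k)^{C_{p^n}}\cong W_{n+1}(k)$ and the deduction of part (2) from part (1) by Mittag--Leffler along the surjective $F$-maps are fine as far as they go, but they rest on the positive-degree structure that the present argument gets wrong.
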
 

In particular, if $\THH(X)$ is perfect
in $\Mod_{\THH(k)}(\CycSp)$, then $\TF_i(X)$ is a finitely generated
$W(k)$-module for all $i$. We show that if $X$ is a supersingular K3 surface,
then $\TF_{-1}(X)$ is not finitely generated over $W(k)$.
Here we use the following fundamental result. 

\begin{theorem}[{Hesselholt \cite[Theorem B]{Hesselholt}}] 
\label{TRofsmooth}
Let $k$ be a perfect field of characteristic $p>0$. If $A$ is a smooth
$k$-algebra, then 
$\pi_*  \THH(A)^{C_{p^n}} \simeq W_{n+1} \Omega_A^{\bullet}
\otimes_{W_{n+1}(k)} W_{n+1}(k)[\sigma_{n}]$ with $|\sigma_n| = 2$.
The map $F \colon  \pi_*  \THH(A)^{C_{p^{n+1}}}\to  \pi_* \THH(A)^{C_{p^n}} $ 
is the tensor product of the map $F \colon  W_{n+1}  \Omega_A^\bullet\to W_n
\Omega_A^\bullet$ and the map $W_{n+1}(k)[\sigma_{n+1}] \to W_{n}(k)[\sigma_n]$
acting as the Witt vector Frobenius $F \colon W_{n+1}(k)\rightarrow W_n(k)$ and
sending $\sigma_{n+1}$ to $\sigma_n$. 
\end{theorem}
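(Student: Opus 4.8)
The plan is to follow the inductive strategy of Hesselholt and Madsen: compute $\pi_*\THH(A)^{C_{p^n}}$ one value of $n$ at a time using the cyclotomic structure, and then recognize the answer by means of the universal property of the de Rham--Witt complex. The base case $n=0$ is the combination of B\"okstedt periodicity and the Hochschild--Kostant--Rosenberg theorem: for $A$ smooth over $k$ one has $\THH(A)\otimes_{\THH(k)}Hk\simeq\HH(A/k)\simeq\bigoplus_{i\ge 0}\Omega^i_{A/k}[i]$, and together with Theorem~\ref{bokstedt} and a degeneration argument this yields a ring isomorphism $\pi_*\THH(A)\cong\Omega^\bullet_{A/k}[\sigma]$ with $|\sigma|=2$, under which Connes' operator $B$ (which encodes the infinitesimal $S^1$-action) is the de Rham differential $d$. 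As $W_1\Omega^\bullet_A=\Omega^\bullet_{A/k}$ and $\THH(A)^{C_{p^0}}=\THH(A)$, this is exactly the asserted formula for $n=0$.

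For the inductive step I would use the cofiber sequence
\[ \THH(A)_{hC_{p^n}}\longrightarrow\THH(A)^{C_{p^n}}\stackrel{R}{\longrightarrow}\THH(A)^{C_{p^{n-1}}} \]
supplied by the cyclotomic structure on $\THH(A)$ (the canonical equivalence $\Phi^{C_p}\THH(A)\simeq\THH(A)$; in classical language, the fundamental cofiber sequence). Granting the formula for the right-hand term, one computes $\pi_*\THH(A)_{hC_{p^n}}$ via the homotopy-orbit spectral sequence $\H_*(BC_{p^n};\pi_*\THH(A))\Rightarrow\pi_*\THH(A)_{hC_{p^n}}$; the group acts trivially on homotopy since $S^1$ is connected, and the differentials are controlled by the $S^1$-action, i.e.\ by the operator $d$ on $\Omega^\bullet_{A/k}$ together with the $p$-power contributions coming from $\H_*(BC_{p^n};\ZZ)$. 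The resulting $E_\infty$-term is assembled from $\Omega^i_{A/k}$ and its Cartier-theoretic subquotients ($B^i=d\Omega^{i-1}_{A/k}$, $Z^i=\ker d$, and quotients thereof) --- precisely the building blocks of the associated graded of the $V$-filtration on $W_n\Omega^\bullet_A$.

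To assemble the answer, equip the groups $\pi_*\THH(A)^{C_{p^\bullet}}$ with the operators $R$, $F$ (inclusion of fixed points) and $V$ (Verschiebung) from the tower, the differential $d$ from the $S^1$-action, and the classes $\sigma_n$. One checks that (after forgetting the $\sigma_n$) this is a Witt complex over $A$ in the sense of Hesselholt--Madsen, so the universal property of $W_\bullet\Omega^\bullet_A$ produces a natural comparison map $W_{n+1}\Omega^\bullet_A\otimes_{W_{n+1}(k)}W_{n+1}(k)[\sigma_n]\to\pi_*\THH(A)^{C_{p^n}}$. One then proves it is an isomorphism by induction on $n$: both sides carry compatible filtrations (the $V$-filtration on the left, the filtration from the cofiber sequence on the right), and on associated graded the claim reduces to identifying the $E_\infty$-term above with the known presentation of $\mathrm{gr}\,W_n\Omega^\bullet_A$. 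The statement about $F$ falls out of the same universal property, under which the cyclotomic Frobenius corresponds to the de Rham--Witt Frobenius, with $F(\sigma_{n+1})=\sigma_n$ forced by the $n=0$ normalization.

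The main obstacle is the last two steps taken together: controlling the homotopy-orbit spectral-sequence differentials, checking that the homotopy of the fixed-point spectra satisfies \emph{all} the Witt-complex axioms (rather than merely admitting a map from, or surjecting onto, $W_\bullet\Omega^\bullet_A$), and matching associated graded pieces carefully enough that the comparison map is an \emph{isomorphism}. This is the genuinely non-formal core of Hesselholt's Theorem~B; by comparison, the reduction to the smooth affine situation and the $n=0$ identification are comparatively routine.
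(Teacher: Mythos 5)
This statement is not proved in the paper at all: it is quoted as Theorem~B of Hesselholt \cite{Hesselholt} (with the fixed-point ring structure going back to Hesselholt--Madsen \cite{HM97}), so there is no internal argument to compare yours against. That said, your sketch does reproduce the architecture of the cited proof: the base case $\pi_*\THH(A)\cong \Omega^\bullet_{A/k}\otimes_k k[\sigma]$ via B\"okstedt periodicity and HKR, the fundamental cofiber sequence $\THH(A)_{hC_{p^n}}\to \THH(A)^{C_{p^n}}\xrightarrow{R}\THH(A)^{C_{p^{n-1}}}$ supplied by the cyclotomic structure, the homotopy-orbit spectral sequence with differentials governed by Connes' operator, and the recognition of the tower $\{\pi_*\THH(A)^{C_{p^n}}\}$ with its operators $R,F,V,d$ as a Witt complex, so that the universal property of $W_\bullet\Omega^\bullet_A$ furnishes the comparison map, which is then shown to be an isomorphism by induction against the standard ($V$-)filtration. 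This is genuinely the route taken in the literature, so as a blind reconstruction it is faithful.

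Be aware, however, that what you set aside as the ``non-formal core'' is where essentially all the work lies, and your outline does not engage with it: (i) even the base-case freeness of $\pi_*\THH(A)$ over $k[\sigma]$ is not a formal degeneration statement but uses smoothness, \'etale base change for $\THH$, and reduction to polynomial algebras; (ii) identifying the $E^\infty$-term of the orbit spectral sequence with the associated graded of $W_{n+1}\Omega^\bullet_A$ requires the same \'etale-local reduction on both sides together with a careful analysis of the Cartier subquotients $B^i\subseteq Z^i\subseteq\Omega^i_{A/k}$; and (iii) verifying the Witt-complex axioms for the fixed-point tower (notably $FdV=d$ and $Fd[a]=[a]^{p-1}d[a]$) requires genuine input from the equivariant/cyclotomic structure, not formal manipulation. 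So your proposal should be read as an accurate roadmap of Hesselholt's proof rather than a proof; for the purposes of this paper, the correct move is exactly what the authors do, namely cite \cite{Hesselholt}.
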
 


Now let $X$ be a smooth and proper scheme over $k$. 
Note that the construction $U \mapsto \TR^n(U)$ is a sheaf of spectra
$\underline{\TR^n}$ on the small \'etale site of $X$ (cf. \cite[Section~3]{GH}).
By Theorem~\ref{TRofsmooth}, 
there is a strongly convergent descent spectral sequence
$$\E^2_{s,t}=  \H^{-s}(X, \pi_t \underline{\TR^n})
\we
\H^{-s}(X,\bigoplus_{j=0}^\infty 
W_{n+1}\Omega_X^{t-2j})\Rightarrow\TR^n_{s+t}(X),$$ and taking the limit over $n$
we obtain a strongly convergent spectral sequence
$$\E^2_{s,t}=\lim_{n,F}\H^{-s}(X,\bigoplus_{j=0}^{\infty}W_n\Omega_X^{t-2j})\Rightarrow\TF_{s+t}(X).$$
Note that since $X$ is proper, all of the terms at each stage of the inverse limit are finite length, so no $\lim^1$ terms appear.
Note also that this spectral sequence comes from filtering
the \'etale sheaf $\underline{\TF}$ given by $U \mapsto \TF(U)$ via the tower
obtained by taking the inverse limit of the Postnikov towers of the
$\underline{\TR^n}$ (and not the Postnikov tower of $\underline{\TF}$ itself).
We will show using this spectral sequence that $\TF_{-1}(X)$ is
non-finitely generated when $X$ is a supersingular K3 surface.

As discussed in~\cite[Section~5]{hesselholt-tp}, the terms in the $\E^2$-page
of the spectral sequence computing $\TF$ above appear in the conjugate spectral
sequence
$$\E_2^{s,t}=\lim_{n,F}\H^s(X,W_n\Omega^t_X)\Rightarrow\H^{s+t}_{\mathrm{crys}}(X/W)$$
computing crystalline cohomology. This spectral sequence is studied in~\cite{illusie-raynaud} together
with the Hodge spectral sequence
$$\E_2^{s,t}=\H^s(X,W\Omega^t_X)\Rightarrow\H^{s+t}_{\mathrm{crys}}(X/W)$$
arising from the naive filtration of the de Rham--Witt complex. When $X$ is a
surface, $\lim_{n,F}\H^s(X,W_n\Omega^t_X)=0$ and $\H^s(X,W\Omega^t_X)=0$ for
$s>2$ or $t>2$. Moreover, the crystalline cohomology of a smooth proper
$k$-scheme is finitely generated over $W(k)$.


\begin{proposition}
The term $\lim_{n, F} \H^2(X,W_n\Omega^1_X )$ is not finitely generated if $X$ is a supersingular K3 surface. 
\end{proposition}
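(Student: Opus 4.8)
The plan is to compute $\lim_{n,F}\H^2(X,W_n\Omega^1_X)$ directly using known structural results about the de Rham--Witt complex of a supersingular K3 surface $X$ due to Illusie--Raynaud \cite{illusie-raynaud}. The key input is that for a supersingular K3 surface, the slope spectral sequence does \emph{not} degenerate at $\E_1$; equivalently, the cohomology groups $\H^s(X,W\Omega^t_X)$ are not all finitely generated $W(k)$-modules, and the failure is concentrated precisely in the ``interesting'' bidegree. Concretely, one knows that $\H^0(X,W\Omega^2_X)$ (or, dually, $\H^2(X,W\Omega^0_X) = \H^2(X,W(\mathcal{O}_X))$) is a non-finitely-generated $W(k)$-module on which $V$ acts topologically nilpotently but $F$ acts bijectively; this is the classical manifestation of supersingularity, where the formal Brauer group $\widehat{\Br}(X)$ has infinite height and $\H^2(X,W(\mathcal{O}_X))$ is its (co)Dieudonn\'e module. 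The first step is therefore to recall these facts precisely and to identify which $\H^s(X,W_n\Omega^t_X)$ or their $\lim_{n,F}$ fail finite generation.

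The second step is to relate $\lim_{n,F}\H^2(X,W_n\Omega^1_X)$ to such a group. Here I would exploit the exact sequences relating $W_n\Omega^1_X$ to $W_n\mathcal{O}_X$ and $W_n\Omega^2_X$, together with the fact (noted in the excerpt) that for a surface the relevant cohomological range is small ($s,t \le 2$), so the spectral sequences computing crystalline cohomology from $\lim_{n,F}\H^s(X,W_n\Omega^t_X)$ have few nonzero terms. Since $\H^*_{\mathrm{crys}}(X/W)$ is finitely generated over $W(k)$, and since most of the $\E_2$-terms $\lim_{n,F}\H^s(X,W_n\Omega^t_X)$ are finitely generated (by Illusie--Raynaud, the non-finiteness lives in exactly one spot), a counting/bookkeeping argument across the conjugate spectral sequence forces $\lim_{n,F}\H^2(X,W_n\Omega^1_X)$ to carry the non-finitely-generated contribution — it cannot be cancelled because there is no finitely-generated term of the appropriate total degree for it to cancel against. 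Alternatively, and perhaps more cleanly, I would invoke the explicit description in Illusie--Raynaud of the de Rham--Witt cohomology of a supersingular K3: the ``$V$-torsion'' (equivalently the part killed by some power of $V$ but with $F$ acting nontrivially) appears in $\H^2(X,W\Omega^1_X)$, and passing to $\lim_{n,F}$ precisely extracts this infinitely-generated piece rather than killing it.

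A third, more hands-on approach — which I would pursue if the black-box citation route feels unsatisfying — is to use the Artin--Mazur formal group. For a K3 surface, $\H^2(X,W\mathcal{O}_X)$ is the Dieudonn\'e module of the formal Brauer group $\widehat{\Br}(X)$; supersingularity means $\widehat{\Br}(X) \cong \widehat{\Gm}$-like only in the limit, i.e. it has infinite height, so $\H^2(X,W\mathcal{O}_X) \cong k[[V]]$ as a module with $F$ acting as $V^{-1}$ (i.e. $F$ is an isomorphism, $V$ topologically nilpotent, and the module is not finitely generated over $W(k)$). Then $\lim_{n,F}$ of the corresponding tower $\H^2(X,W_n\mathcal{O}_X)$ is computed by noting $F$ acts invertibly, so the limit is itself non-finitely-generated; finally one transfers this along the appropriate piece of the de Rham--Witt tower to land in $\lim_{n,F}\H^2(X,W_n\Omega^1_X)$ using that for a surface $W_n\Omega^2_X$ contributes only finitely generated cohomology and the connecting maps in the de Rham--Witt short exact sequences are controlled.

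The main obstacle is the second step: pinning down \emph{exactly} which cohomological bidegree of the de Rham--Witt complex of a supersingular K3 carries the infinitely-generated part, and being careful about the difference between the slope (``Hodge--Witt'') spectral sequence $\H^s(X,W\Omega^t_X)$ and the conjugate spectral sequence $\lim_{n,F}\H^s(X,W_n\Omega^t_X)$ — these differ precisely by the $F$-action, and it is the interplay between restriction $R$ and Frobenius $F$ (and the fact that taking $\lim_{n,F}$ can turn a $V$-nilpotent-but-$F$-invertible module into something still infinite, rather than into $0$) that must be handled correctly. I expect the cleanest write-up to cite Illusie--Raynaud \cite{illusie-raynaud} for the structure of $W\Omega^\bullet_X$ of a supersingular K3 and then do the short diagram chase in the surface range to conclude, rather than reproving the formal Brauer group computation from scratch.
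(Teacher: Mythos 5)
Your plan correctly identifies the main obstacle — the difference between the Hodge--Witt groups $\H^s(X,W\Omega^t_X)=\lim_{n,R}\H^s(X,W_n\Omega^t_X)$ and the conjugate-spectral-sequence terms $\lim_{n,F}\H^s(X,W_n\Omega^t_X)$ — but it does not resolve it, and the concrete claims you offer in its place are false. First, your ``hands-on'' route via the formal Brauer group gets the module structure backwards: for a \emph{supersingular} K3 the formal Brauer group is the formal additive group (infinite height), so on its Dieudonn\'e module $\H^2(X,W\mathcal{O}_X)\cong k[[V]]$ the Frobenius acts as zero, not invertibly; the ``$F$ bijective, $V$ topologically nilpotent'' picture you describe is the finite-height/unit-root situation. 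Worse, the conclusion you want to draw from it — that $\lim_{n,F}\H^2(X,W_n\mathcal{O}_X)$ is non-finitely generated and that this can then be ``transferred'' to $\lim_{n,F}\H^2(X,W_n\Omega^1_X)$ — is actually false at the first step: $\lim_{n,F}\H^2(X,W_n\Omega^0_X)$ \emph{is} finitely generated (this is part of the argument in the paper: the only differential hitting it comes from the finitely generated total-degree-one term $\lim_{n,F}\H^0(X,W_n\Omega^1_X)$, and $\H^2_{\mathrm{crys}}(X/W)$ is finite over $W(k)$), even though $\H^2(X,W\mathcal{O}_X)$ is not. Likewise your identification ``$\H^0(X,W\Omega^2_X)$ or, dually, $\H^2(X,W\mathcal{O}_X)$ is non-finitely generated'' is off: $\H^0(X,W\Omega^2_X)=0$ for a K3, while it is the $\lim_{n,F}$-term $\lim_{n,F}\H^0(X,W_n\Omega^2_X)$ that turns out to be non-finite — again illustrating that the two towers behave very differently, which is exactly the point your transfer step glosses over.

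Your second route (bookkeeping in the conjugate spectral sequence against finiteness of crystalline cohomology) is the right idea and is what the paper does, but as stated it also misses the mechanism: you say the non-finite term ``cannot be cancelled because there is no finitely generated term of the appropriate total degree to cancel against,'' whereas the actual phenomenon is that two non-finitely generated terms cancel against \emph{each other}. The paper's argument runs: by Illusie's table and the Illusie--Raynaud results, all $\E_2$-terms of total degrees $0,1,4$ are finitely generated and torsion-free, some term in total degrees $2,3$ is not finitely generated, the spectral sequence degenerates at $\E_3$, and the only possibly nonzero differentials are $d^2\colon(0,2)\to(2,1)$ and $d^2\colon(0,1)\to(2,0)$. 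The terms $(1,1)$ and $(1,2)$ survive unchanged, hence are finite; $(2,0)$ is finite as above; so the non-finiteness must sit in $(0,2)$ and $(2,1)$, and it must sit in \emph{both}, since if only one of the two were infinite the kernel or cokernel of $d^2\colon(0,2)\to(2,1)$ would be infinite and would survive to $\E_\infty$, contradicting finite generation of $\H^*_{\mathrm{crys}}(X/W)$. To repair your write-up, replace the formal-group/transfer step by this elimination argument (or by the precise structural results of Illusie--Raynaud on the supersingular K3 case), and be careful never to pass from a statement about $\lim_{n,R}$ to one about $\lim_{n,F}$ without justification.
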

\begin{proof}

Recall from Figure~\ref{fig:k3cohomology} the Hodge--Witt cohomology groups $\H^i(X, W \Omega^j_X)$ for $X$ a
supersingular K3 surface taken from~\cite[Section~7.2]{illusie-derham-witt}.
\begin{figure}[h]
    \centering
    \begin{tabular}{c c c c}
        &   $W\Omega_X^0$   &   $W\Omega^1_X$   &   $W\Omega_X^2$\\[0.5ex]
        \hline
        $\H^2$  &   $k[[x]]$ & $k[[y]]$ & $W(k)$\\[0.5ex]
        $\H^1$  &   $0$ &   $W(k)^{22}$    & $0$\\[0.5ex]
        $\H^0$  &   $W(k)$ &   $0$ &   $0$
    \end{tabular}
    \caption{The Hodge--Witt cohomology of a supersingular K3 surface.}
    \label{fig:k3cohomology}
\end{figure}
By results of Illusie--Raynaud, the non-finite generation of
$\H^2(X,W\Omega_X^0)$ and $\H^2(X,W\Omega^1_X)$ over $W(k)$ has consequences in the
conjugate spectral sequence. We use the following general facts proved
in~\cite{illusie-raynaud}.
\begin{enumerate}
    \item[(1)] If
        $\H^{s}(X,W\Omega^t_X)$ is finitely generated for all $s+t=n$ (one says
        that $X$ is Hodge--Witt in degree $n$), then
        $\lim_{n,F}\H^s(X,W_n\Omega^t_X)$ is torsion-free for all $s+t=n$ if and only if 
        $\H^s(X,W\Omega^t_X)$ is torsion-free for all
        $s+t=n$~\cite[IV.4.6.1]{illusie-raynaud}.
    \item[(2)] Modulo torsion, the terms in the conjugate spectral sequence are
        finitely generated~\cite[Introduction, 2.2]{illusie-raynaud}.
    \item[(3)] If $\H^s(X,W\Omega^t_X)$ is not finitely generated for some pair
        $(s,t)$,
        then there is a pair $(s',t')$ such that
        $\lim_{n,F}\H^{s'}(X,W_n\Omega^{t'}_X)$ is not finitely
        generated~\cite[Introduction, 2.3]{illusie-raynaud}.
\end{enumerate}

From Figure~\ref{fig:k3cohomology} and points (1) and (2) above, we see that 
the terms $\lim_{n,F}\H^s(X,W_n\Omega^t_X)$ of total degrees $0,1,4$ in the conjugate spectral sequence are
finitely generated torsion-free $W(k)$-modules. From (3), we see that some
terms of total degrees $2$ and $3$ must be non-finitely generated.

We need to identify exactly where the failure of finite generation occurs. The
conjugate spectral sequence degenerates after the $\E_2$-page for dimension
reasons. Thus, the only possibly non-zero differentials are
$$\lim_{n,F}\H^0(X,W_n\Omega_X^2)\rightarrow\lim_{n,F}\H^2(X,W_n\Omega^1_X)$$
and
$$\lim_{n,F}\H^0(X,W_n\Omega_X^1)\rightarrow\lim_{n,F}\H^2(X,W_n\Omega_X^0).$$
The term $\lim_{n,F}\H^0(X,W_n\Omega_X^1)$ is in total degree $1$ and hence is
torsion-free and finitely generated. It follows that
$\lim_{n,F}\H^2(X,W_n\Omega_X^0)$ is finitely generated as well since otherwise
it would contribute something infinitely generated in crystalline cohomology.
Therefore, both
$$\lim_{n,F}\H^0(X,W_n\Omega_X^2), \quad \lim_{n,F}\H^2(X,W_n\Omega^1_X)$$
are non-finite over $W(k)$. (The kernel and cokernel of the differential are,
however, finite.) We see in particular that $\lim_{n,F}\H^2(X,W_n\Omega^1_X)$ is
non-finite over $W(k)$.
\end{proof}

We can now state and prove the main conclusion of this section. 

\begin{corollary}
    Let $k$ be a perfect field of characteristic $p>0$.
    If $X$ is a supersingular K3 surface over $k$, then $\TF_{-1}(X)$ is not
    finitely generated as a $W(k)$-module. In particular, $\THH(X) \in
    \Mod_{\THH(k)}(\CycSp)$ is not perfect. 
\end{corollary}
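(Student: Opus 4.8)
The plan is to compute $\TF_{-1}(X)$ from the strongly convergent spectral sequence
\[
\E^2_{s,t}=\lim_{n,F}\H^{-s}\!\Bigl(X,\bigoplus_{j=0}^{\infty}W_n\Omega_X^{t-2j}\Bigr)\Rightarrow\TF_{s+t}(X)
\]
introduced above, to show that $\TF_{-1}(X)$ is not finitely generated over $W(k)$, and then to invoke the observation already recorded (a consequence of Proposition~\ref{whenperfect} applied to $\pi_*\TF(k)=W(k)[\sigma]$) that perfectness of $\THH(X)$ in $\Mod_{\THH(k)}(\CycSp)$ would force every $\TF_i(X)$ to be a finitely generated $W(k)$-module.

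First I would list the $\E^2$-terms of total degree $s+t=-1$. Since $X$ is a surface, $\H^{-s}(X,-)=0$ unless $0\leq -s\leq 2$, and $\bigoplus_{j}W_n\Omega_X^{t-2j}=0$ unless $t\geq 0$; together with $s+t=-1$ this leaves exactly $(s,t)=(-1,0)$ and $(s,t)=(-2,1)$, so that
\[
\E^2_{-1,0}=\lim_{n,F}\H^1(X,W_n\mathcal{O}_X),\qquad \E^2_{-2,1}=\lim_{n,F}\H^2(X,W_n\Omega_X^1).
\]
For the first term, $X$ is Hodge--Witt in total degree $1$ since $\H^1(X,W\Omega_X^0)=0$ and $\H^0(X,W\Omega_X^1)=0$ by Figure~\ref{fig:k3cohomology}; hence by point (1) of Illusie--Raynaud recalled above $\E^2_{-1,0}$ is torsion-free, and by point (2) it is finitely generated modulo torsion, so it is a finitely generated $W(k)$-module (in fact it vanishes, as $\H^1(X,\mathcal{O}_X)=0$). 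The second term $\E^2_{-2,1}=\lim_{n,F}\H^2(X,W_n\Omega_X^1)$ is precisely the group shown in the preceding proposition \emph{not} to be finitely generated over $W(k)$.

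Next I would verify that no differential destroys this conclusion. Because $X$ is a surface, every differential out of $\E^2_{-2,1}$ and every higher differential out of $\E^2_{-1,0}$ has target in $\H^{\geq 3}(X,-)=0$, and every differential into $\E^2_{-1,0}$ has source of internal degree $t<0$, hence zero; so $\E^\infty_{-1,0}=\E^2_{-1,0}$ is finitely generated. The only differential that can hit $\E^2_{-2,1}$ is $d_2\colon\E^2_{0,0}\to\E^2_{-2,1}$, and its source $\E^2_{0,0}=\lim_{n,F}\H^0(X,W_n\mathcal{O}_X)\cong W(k)$ is finitely generated (all higher differentials into $\E^2_{-2,1}$ again vanish for internal-degree reasons). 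Hence $\E^\infty_{-2,1}$ is the quotient of the non-finitely-generated $W(k)$-module $\E^2_{-2,1}$ by a finitely generated submodule, so it too is not finitely generated. Since $\TF_{-1}(X)$ carries a finite filtration with associated graded pieces $\E^\infty_{-1,0}$ and $\E^\infty_{-2,1}$, and $W(k)$ is Noetherian, $\TF_{-1}(X)$ is not a finitely generated $W(k)$-module, and therefore $\THH(X)$ is not perfect in $\Mod_{\THH(k)}(\CycSp)$.

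The only step requiring genuine care is this differential bookkeeping: one must rule out that the single non-finitely-generated contribution $\E^2_{-2,1}$ is either annihilated by an outgoing differential or trimmed to a finitely generated quotient by an incoming one. Both possibilities are excluded purely by the surface hypothesis (which kills $\H^{\geq 3}$) together with the identification $\E^2_{0,0}\cong W(k)$; in particular one never needs to match the differentials of the $\TF$-spectral sequence with those of the conjugate spectral sequence, only to bound their sources and targets.
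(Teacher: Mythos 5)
Your proof is correct and follows essentially the same route as the paper: both arguments isolate the term $\E^2_{-2,1}=\lim_{n,F}\H^2(X,W_n\Omega^1_X)$ in the local--global spectral sequence for $\TF(X)$, note that the only differential that can hit it comes from the finitely generated term $\E^2_{0,0}=\lim_{n,F}\H^0(X,W_n\Omega^0_X)$, and conclude non-finite generation of $\TF_{-1}(X)$ (hence non-perfectness of $\THH(X)$ in $\Mod_{\THH(k)}(\CycSp)$) from the Noetherianness of $W(k)$. Your extra verification that $\E^\infty_{-1,0}$ is finitely generated is harmless but not needed, since a single non-finitely-generated associated graded piece already suffices.
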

\begin{proof}

    Returning to the local-global spectral sequence for $\TF(X)$,
    Figure~\ref{fig:tfssk3} displays the region of the spectral sequence of
    interest to us.
\begin{figure}[h]
    \begin{equation*}
        \xymatrix{
        \lim_{n,F}\H^2(X,W_n\Omega^1)   &   &\\
        \lim_{n,F}\H^2(X,W_n\Omega^0\oplus W_n\Omega^2)
        &\lim_{n,F}\H^1(X,W_n\Omega^0\oplus
    W_n\Omega^2)&\lim_{n,F}\H^0(X,W_n\Omega^0\oplus W_n\Omega^2)\ar[ull]^{d^2}\\
        \lim_{n,F}\H^2(X,W_n\Omega^1)&\lim_{n,F}\H^1(X,W_n\Omega^1)&\lim_{n,F}\H^0(X,W_n\Omega^1)\ar[ull]^{d^2}\\
        \lim_{n,F}\H^2(X,W_n\Omega^0)&\lim_{n,F}\H^1(X,W_n\Omega^0)&\lim_{n,F}\H^0(X,W_n\Omega^0)\ar[ull]^{d^2}\\
        }
    \end{equation*}
    \caption{A part of the local-global spectral sequence for $\TF$ of a
    surface. To fix coordinates, the bottom left term displayed is $\E^2_{-2,0}$.}
    \label{fig:tfssk3}
\end{figure}
All terms and differentials contributing to $\TF_{s+t}(X)$ for $s+t\leq 1$ are shown.
However, $\lim_{n,F}\H^0(X,W_n\Omega^0_X)$ is torsion-free and finite over
$W(k)$ (since it has degree $0$ in the conjugate spectral sequence for
crystalline cohomology). Thus, the $d^2$-differential hitting
$\lim_{n,F}\H^2(X,W_n\Omega^1)$ cannot possibly annihilate enough for the
resulting term $\E^3_{-2,1}\iso\E^\infty_{-2,1}$ to be a finite $W(k)$-module.
Of course, this implies that $\TF_{-1}(X)$ is not finitely generated, which is
what we wanted to show. 
\end{proof}

The reader might worry that this argument shows too
much and can be used to contradict the finiteness of $\TP(X)$ over $\TP(k)$
given the spectral sequence of~\cite[Theorem~6.8]{hesselholt-tp}.
However, it is the periodicity of $\TP(k)$ that saves the day. When we
periodicize the spectral sequence, more terms appear so that the differential
hitting $\E^2_{-2,1}$ is the same as that hitting $\E^2_{-4,3}$. This fixes the
non-finiteness.

\bibliographystyle{halpha}
\bibliography{tp}

\begin{thebibliography}{AMGR17}

\bibitem[AMGR17]{ayala-mg-rozenblyum}
David Ayala, Aaron Mazel-Gee, and Nick Rozenblyum.
\newblock The geometry of the cyclotomic trace.
\newblock {\em ar{X}iv eprints}, 2017, https://arxiv.org/abs/1710.06409.

\bibitem[BGT13]{BGT}
Andrew~J. Blumberg, David Gepner, and Gon\c{c}alo Tabuada.
\newblock A universal characterization of higher algebraic {$K$}-theory.
\newblock {\em Geom. Topol.}, 17(2):733--838, 2013.

\bibitem[BGT14]{BGTmult}
Andrew~J. Blumberg, David Gepner, and Gon\c{c}alo Tabuada.
\newblock Uniqueness of the multiplicative cyclotomic trace.
\newblock {\em Adv. Math.}, 260:191--232, 2014.

\bibitem[BM17]{blumberg-mandell-tp}
Andrew Blumberg and Michael Mandell.
\newblock The strong {K}\"unneth theorem for topological periodic cyclic
  homology.
\newblock {\em ArXiv e-prints}, 2017, http://arxiv.org/abs/1706.06846.

\bibitem[BMS18]{BMS2}
Bhargav Bhatt, Matthew Morrow, and Peter Scholze.
\newblock Integral $p$-adic {H}odge theory and topological {H}ochschild
  homology.
\newblock {\em ArXiv e-prints}, 2018, https://arxiv.org/abs/1802.03261.

\bibitem[DG02]{DwGr}
William~G. Dwyer and John P.~C. Greenlees.
\newblock Complete modules and torsion modules.
\newblock {\em Amer. J. Math.}, 124(1):199--220, 2002.

\bibitem[GH99]{GH}
Thomas Geisser and Lars Hesselholt.
\newblock Topological cyclic homology of schemes.
\newblock In {\em Algebraic {$K$}-theory ({S}eattle, {WA}, 1997)}, volume~67 of
  {\em Proc. Sympos. Pure Math.}, pages 41--87. Amer. Math. Soc., Providence,
  RI, 1999.

\bibitem[GS88]{gros-suwa}
Michel Gros and Noriyuki Suwa.
\newblock La conjecture de {G}ersten pour les faisceaux de {H}odge-{W}itt
  logarithmique.
\newblock {\em Duke Math. J.}, 57(2):615--628, 1988.

\bibitem[GY83]{GY}
Shiro Goto and Kikumichi Yamagishi.
\newblock Finite generation of {N}oetherian graded rings.
\newblock {\em Proc. Amer. Math. Soc.}, 89(1):41--44, 1983.

\bibitem[Hes96]{Hesselholt}
Lars Hesselholt.
\newblock On the {$p$}-typical curves in {Q}uillen's {$K$}-theory.
\newblock {\em Acta Math.}, 177(1):1--53, 1996.

\bibitem[Hes16]{hesselholt-tp}
Lars Hesselholt.
\newblock Topological hochschild homology and the {H}asse--{W}eil zeta
  function.
\newblock In {\em {A}lpine Algebraic and Applied Topology ({S}aas {A}lmagell,
  {S}witzerland, 2016)}, Contemp. Math. Amer. Math. Soc., Providence, RI, 2016.
\newblock To appear.

\bibitem[HM97]{HM97}
Lars Hesselholt and Ib~Madsen.
\newblock On the {$K$}-theory of finite algebras over {W}itt vectors of perfect
  fields.
\newblock {\em Topology}, 36(1):29--101, 1997.

\bibitem[HMS94]{HMSPic}
Michael~J. Hopkins, Mark Mahowald, and Hal Sadofsky.
\newblock Constructions of elements in {P}icard groups.
\newblock In {\em Topology and representation theory ({E}vanston, {IL}, 1992)},
  volume 158 of {\em Contemp. Math.}, pages 89--126. Amer. Math. Soc.,
  Providence, RI, 1994.

\bibitem[Ill79]{illusie-derham-witt}
Luc Illusie.
\newblock Complexe de de\thinspace {R}ham-{W}itt et cohomologie cristalline.
\newblock {\em Ann. Sci. \'Ecole Norm. Sup. (4)}, 12(4):501--661, 1979.

\bibitem[IR83]{illusie-raynaud}
Luc Illusie and Michel Raynaud.
\newblock Les suites spectrales associ\'ees au complexe de de {R}ham-{W}itt.
\newblock {\em Inst. Hautes \'Etudes Sci. Publ. Math.}, (57):73--212, 1983.

\bibitem[Jan92]{jannsen}
Uwe Jannsen.
\newblock Motives, numerical equivalence, and semi-simplicity.
\newblock {\em Invent. Math.}, 107(3):447--452, 1992.

\bibitem[Lur11]{DAGXII}
Jacob Lurie.
\newblock Derived algebraic geometry {XII}: Proper morphisms, completions, and
  the {G}rothendieck existence theorem.
\newblock 2011.
\newblock Available at
  \url{http://www.math.harvard.edu/~lurie/papers/DAG-XII.pdf}.

\bibitem[Lur17]{HA}
Jacob Lurie.
\newblock {\em Higher Algebra}.
\newblock Available at \url{http://www.math.harvard.edu/~lurie/papers/HA.pdf},
  2017.

\bibitem[Mat75]{3local}
Jacob Matijevic.
\newblock Three local conditions on a graded ring.
\newblock {\em Trans. Amer. Math. Soc.}, 205:275--284, 1975.

\bibitem[MNN17]{MNN17}
Akhil Mathew, Niko Naumann, and Justin Noel.
\newblock Nilpotence and descent in equivariant stable homotopy theory.
\newblock {\em Adv. Math.}, 305:994--1084, 2017.

\bibitem[NS17]{nikolaus-scholze}
Thomas Nikolaus and Peter Scholze.
\newblock On topological cyclic homology.
\newblock {\em ArXiv e-prints}, 2017, http://arxiv.org/abs/1707.01799.

\bibitem[NvO82]{gradedringtheory}
C.~N\u{a}st\u{a}sescu and F.~van Oystaeyen.
\newblock {\em Graded ring theory}, volume~28 of {\em North-Holland
  Mathematical Library}.
\newblock North-Holland Publishing Co., Amsterdam-New York, 1982.

\bibitem[Qui72]{Qui}
Daniel Quillen.
\newblock On the cohomology and {$K$}-theory of the general linear groups over
  a finite field.
\newblock {\em Ann. of Math. (2)}, 96:552--586, 1972.

\bibitem[Sch]{scholze-qdef}
Peter Scholze.
\newblock Canonical $q$-deformations in arithmetic geometry.
\newblock {\em Ann. Fac. Sci. Toulouse Math.}
\newblock To appear.

\bibitem[SS03]{SchwedeShipley}
Stefan Schwede and Brooke Shipley.
\newblock Stable model categories are categories of modules.
\newblock {\em Topology}, 42(1):103--153, 2003.

\bibitem[Tab17]{tabuada}
Goncalo Tabuada.
\newblock Noncommutative motives in positive characteristic and their
  applications.
\newblock {\em ArXiv e-prints}, 2017, http://arxiv.org/abs/1707.04248.

\bibitem[To{\"{e}}12]{ToenderAz}
Bertrand To{\"{e}}n.
\newblock Derived {A}zumaya algebras and generators for twisted derived
  categories.
\newblock {\em Invent. Math.}, 189(3):581--652, 2012.

\end{thebibliography}

\end{document}